\newtheorem{eg}{Example}
\newtheorem{assum}{Assumption}
\newtheorem{cond}{Condition}
\newcommand{\R}{\mathbb{R}}
\newcommand{\be}{\begin{equation}}
\newcommand{\ee}{\end{equation}}
\newcommand{\en}{\begin{equation*}}
\newcommand{\een}{\end{equation*}}
\newcommand{\eqn}{\begin{eqnarray}}
\newcommand{\eeqn}{\end{eqnarray}}
\newcommand{\bmat}{\begin{bmatrix}}
\newcommand{\emat}{\end{bmatrix}}
\newcommand{\btab}{\begin{tabular}}
\newcommand{\etab}{\end{tabular}}
\newcommand{\Mcal}{\mathcal{M}}
\newcommand{\sign}{\operatorname{sign}}
\newcommand{\supp}{\operatorname{supp}}
\DeclareMathOperator*{\argmin}{\text{argmin}}
\newcommand{\prox}{\operatorname{prox}}
\def \st {\operatorname*{subject\ to\ }}
\newlength{\imgwidth}
\newcommand{\revise}[1]{\textcolor{black}{#1}}
\newcommand{\twoCol}[2]{\ifthenelse{\boolean{twoColVersion}} {#1} {#2} }
\newcommand{\iprod}[2]{\left \langle #1, #2 \right \rangle }
\crefname{hypothesis}{Hypothesis}{Hypotheses}
\newcommand{\rmnum}[1]{\uppercase\expandafter{\romannumeral #1}}
\title{On the local convergence of the Semismooth Newton Method for composite optimization}
\author{Jiang Hu\thanks{Massachusetts General Hospital and Harvard Medical School, Harvard University, Boston, MA 02114
(\email{hujiangopt@gmail.com}).}
	\and
Tonghua Tian\thanks{School of Operations Research and Information Engineering,  Cornell University, Ithaca, NY14853 USA (\email{tt543@cornell.edu}).} 
	\and
	Shaohua Pan\thanks{School of Mathematics, South China University of Technology, Guangzhou, China (\email{shhpan@scut.edu.cn}).}
	\and
	Zaiwen Wen\thanks{Beijing International Center for Mathematical Research, Center for Data Science and College of Engineering, Peking University, Beijing, China
	(\email{wenzw@pku.edu.cn}).}
	}
\begin{document}

\maketitle

\begin{abstract}
In this paper, we consider a large class of nonlinear equations derived from first-order type  methods  for solving composite optimization problems. Traditional approaches to establishing superlinear convergence rates of semismooth Newton-type methods for solving nonlinear equations usually postulate either nonsingularity of the B-Jacobian or smoothness of the equation. We investigate the feasibility of both conditions. For the nonsingularity condition, we present equivalent characterizations in broad generality, and illustrate that they are easy-to-check criteria for some examples. For the smoothness condition, we show that it holds locally for a large class of residual mappings derived from composite optimization problems. Furthermore, we investigate a relaxed version of the smoothness condition - smoothness restricted to certain active manifolds. We present a conceptual algorithm utilizing such structures and prove that it has a superlinear convergence rate. 
\end{abstract}

\begin{keywords}
Semismooth Newton method, error bound, strict complementarity, superlinear convergence
\end{keywords}

\section{Introduction}
In this paper, we study the semismooth Newton (SSN) method for solving composite optimization problem:
\be \label{prob} \min_{x \in \R^n}\;\; \psi(x) = f(x) + h(x), \ee
where $f,h\!:\mathbb{R}^n\to\bar{\mathbb{R}}:=(-\infty,\infty]$ are proper, closed, and prox-regular functions. In many scenarios, see, e.g., \cite{fukushima1981generalized,eckstein1992douglas,zhao2010newton,li2016douglas}, the optimality condition of \eqref{prob} or its associated subproblems can be represented as the solution of a nonlinear equation
\be \label{prob:nonsmootheq} F(x) = 0, \ee
where $F:\mathbb{R}^n\to\mathbb{R}^n$ is locally Lipschitz continuous. By requiring certain properties on $f$ and $h$, the resulted mapping $F$ is semismooth.

Originally, the SSN method for \eqref{prob:nonsmootheq} is developed for general semismooth mappings in  \cite{mifflin1977semismooth,pang1993nonsmooth,qi1993convergence,qi1993nonsmooth}.  In the last decades, the SSN method is designed to obtain the stationary points of the structured composite optimization problem \eqref{prob} through the augmented Lagrangian framework (ALM) or seeking a root of a constructed semismooth residual mapping. The efficiency of SSN augmented Lagrangian methods has been verified in sparse optimization \cite{li2018highly,lin2019efficient}, linear programming \cite{li2020asymptotically}, and semidefinite programming \cite{zhao2010newton,yang2015sdpnal+}.
 The SSN methods based on the residual mappings of the stationarity of \eqref{prob} induced by the first-order methods, e.g., the proximal gradient method (PGM) \cite{xiao2018regularized,milzarek2014semismooth,milzarek2019stochastic,yang2021stochastic} and the Douglas–Rachford splitting (DRS) \cite{li2018semismooth,liu2022multiscale}, 
have also been shown to be very competitive in solving the structured composite optimization problems.

The local superlinear or quadratic convergence of the SSN method is typically established by assuming the invertibility of all elements of the B-Jacobian at the limiting point of the iterates (which is the so-called BD-regularity condition). The BD-regularity condition of \eqref{prob:nonsmootheq} is closely related to the second-order sufficient condition of \eqref{prob}. For smooth $f$ and certain convex $h$, it is proved in \cite[Theorem 5.4.4]{milzarek2016numerical} that the second-order sufficient condition along with the strict complementarity condition implies the BD-regularity of the natural residual mapping induced by the PGM. As the sufficient conditions may not be enough to fully characterize BD-regularity, we present conditions that are both sufficient and necessary for the BD-regularity of the residual mappings induced by the PGM and DRS method. To the best of our knowledge, this characterization covers the existing ones for some special composite optimization problems such as the Lasso problem \cite{griesse2008semismooth,milzarek2014semismooth}.

Note that BD-regularity is a strong condition since it implies the isolatedness of the stationary point, which may not hold in practice. However, in most existing works, this condition is indispensable to establishing the superlinear convergence of the SSN method for nonsmooth mappings $F$. By contrast, in the cases where $F$ is locally smooth, the error bound (EB) condition can often serve as a weaker substitute for BD-regularity in proving the superlinear convergence of the Levenberg–Marquardt (LM) method  \cite{fan2004inexact} and the Newton method \cite{zhou2005superlinear,zhou2006convergence} to solve \eqref{prob:nonsmootheq} (while the conditions for $F$ being locally smooth is  not clear). A regularized Newton method for solving a smooth and monotone gradient system without nonsingluarity assumption on the Hessian is also analyzed in \cite[Section 3]{li2004regularized}. In \cite{zhou2017unified}, the authors prove that the EB condition holds for many structured convex composite optimization problems. By assuming the EB condition, the superlinear convergence of a regularized proximal Newton method for solving \eqref{prob} with twice continuously differentiable $f$ and convex $g$ is presented in \cite{yue2019family}. The core tool used to establish the superlinear convergence in  \cite{fan2004inexact,zhou2005superlinear,zhou2006convergence,yue2019family} is the high-order approximation property of the subproblems. To be specific, for the smooth $F$, both the Newton method and the LM method directly utilize the Taylor expansion of $F$ to construct the subproblems. For the nonsmooth $F$ from \eqref{prob} with twice continuously differentiable $f$ and convex $g$, the high-order approximate subproblems in the proximal Newton method are constructed by using the Taylor expansion of the smooth part of $F$ while keeping the remaining nonsmooth part.
Note that solving the proximal Newton subproblem can be still difficult since the nonsmooth part is preserved. The SSN method aims to construct a tractable subproblem (i.e., a linear system) by investigating the linear approximation of the mapping $F$.
Hence, the smoothness of $F$ is essential for establishing the superlinear convergence rate of the SSN method in the absence of the BD-regularity. It is worth noting that the local superlinear convergence of Newton-type methods with more complicated subproblems, e.g., linear program, and the LM-type methods are also considered in \cite{fischer2002local,facchinei2014lp,fischer2016convergence,kummer1988newton,yamashita2001rate,fischer2010inexactness} to tackle the nonsmoothness of $F$. Our second goal is to establish the local smoothness of a large class of residual mappings derived from the composite optimization problems and  elaborate the superlinear convergence of the associated SSN method under the EB.

The contributions can be summarized as follows. 1)
 For the residual mappings induced by the PGM, the DRS, and the ALM method, we present equivalent characterizations of the BD-regularity condition. To our best knowledge, this is the first result on the sufficient and necessary conditions for BD-regularity. The Lasso problem is presented as a concrete example to enhance the understanding. 
    2) We provide two sufficient conditions for the local smoothness of the residual mapping $F$ around a stationary point. Our first condition requires the strict complementarity (SC) condition of problem \eqref{prob} and the partial smoothness of $f$ and $h$. We show the local smoothness of the proximal operators under the partial smoothness condition, which extends the smoothness result on the projection operators of smooth manifolds \cite{lewis08projection}.
    Our second condition, based on the analysis in \cite{rockafellar1988first,stella2017forward}, consists of the twice epi-differentiability, the generalized quadratic property, and the closedness of the set of nondifferentiable points of the proximal operators. Moreover, the generalized quadratic property is satisfied by fully decomposable functions under the SC. Various norm functions and indicator functions are presented as concrete examples that satisfy the proposed two conditions. Under either condition, the SSN method reduces to the smooth Newton method locally. 
    3) We show that a projected SSN method has locally  superlinear convergence assuming the local smoothness of $F$ on a submanifold, the EB condition, and certain local monotonicity. Such result includes the SSN method as a special case. In particular, the local superlinear convergence of the SSN method can be obtained if the SC condition, the EB condition, the local monotonicity, and either of the two proposed sufficient smoothness conditions holds. 
    Numerical experiments are performed to demonstrate our findings. A summary on comparisons with existing works is presented in Table \ref{tab:comparison}.
    Finally, our theoretical results are easily generalizable to problem \eqref{prob} with $f$ and $h$ defined on any Euclidean space (e.g., the matrix space) instead of $\R^n$.

\begin{table}[t] 
\caption{The local convergence rate of the (semismooth) Newton methods. The column ``isolatedness'' indicates if the stationary point is isolated.} \label{tab:comparison}
	\centering
	\setlength{\tabcolsep}{1pt}
\begin{tabular}{|c|c|p{3cm}|c|c|}
	\hline 
	$F$ & method & regularity condition & isolatedness &\quad  local convergence rate  \\
	\hline 
	smooth & Newton  & \quad nonsingularity & necessary & \qquad quadratic \cite{wright1999numerical,zhou2005superlinear,zhou2006convergence} \\ \cline{1-5}
	semismooth & SSN & \quad BD-regularity & necessary & \quad superlinear \cite{pang1993nonsmooth,qi1993convergence,qi1993nonsmooth}  \\\hline
	smooth & LM  & \quad  \qquad EB & not necessary &  \qquad quadratic \cite{fan2004inexact} 
	\\ \cline{1-5}
	semismooth & SSN & EB + smoothness on active manifolds
	& not necessary & superlinear (\textbf{this work})  \\\hline
\end{tabular}
\end{table}

\subsection{Notation}
For any positive integer $n$, we define $[n]=\{1,2,\ldots, n\}$. The support set of a vector $x\in \R^n$ is defined as ${\rm supp}(x):=\{i: x_i \ne 0\}$. With a slight abuse of notation, we use $\|x\|$ to denote the $\ell_2$ norm of a vector $x$ and $\|X\|$ to denote the spectral norm of a matrix $X$. Let $B(x,r)$ be the open ball with radius $r>0$ centered at $x \in \R^n$, i.e. $B(x,r):=\{ y\in \R^n :\|y-x\| < r\}$, and let $\bar{B}(x, r)$ be the closure of $B(x,r)$. For a matrix $A \in \R^{m\times n}$, we denote its range space and null space by ${\rm Range(A)} = \{Ad: d \in \R^n\}$ and
${\rm Ker}(A) = \{d \in \R^n : Ad = 0\}$, respectively. For a symmetric matrix $A \in \R^{n\times n}$, we let $\lambda_{\max}(A)$ and $\lambda_{\min}(A)$ denote its largest and smallest eigenvalues respectively. Let $\mathrm{id}$ denote the identity mapping from $\R^n$ to $\R^n$. For a set $S \subset \R^n$, we use $S^c$ to denote its complement in $\R^n$. The indicator function $\delta_{S}$ is defined as $\delta_{S}(x) = 0$ if $x \in S$ and $+\infty$ otherwise. When $S$ is convex, let $\mathrm{ri}\, S$ denote its relative interior, and let $\mathrm{par}\, S$ denote the subspace parallel to its affine hull.

\subsection{Organization}
We begin with some preliminaries on the residual mappings and the SSN method in Section \ref{sec:preli-ssn}. The equivalent characterizations of the BD-regularity condition are investigated in Section \ref{sec:bd-regular}. We introduce the SC condition and the two sufficient conditions for the local smoothness of the residual mappings in Section \ref{sec:sc}. 
The analysis of the local superlinear convergence rate of a projected SSN method is presented in Section \ref{sec:convergence}. Finally, we show the numerical verification in Section \ref{sec:num}.

\section{Preliminaries} \label{sec:preli-ssn}
\subsection{Semismoothness}
Let us first recall the definition of the B-Jacobian. 
By the Rademacher's theorem, a locally Lipschitz mapping 
$F:\mathbb{R}^n\to\mathbb{R}^n$ is almost everywhere differentiable. Denote by $D_F$ the set of differentiable points of $F$. The B-Jacobian  of $F$ at $x$ is defined as
$$
\partial_{B} F(x):=\left\{\lim _{k \rightarrow \infty} J\left(x^{k}\right) \mid x^{k} \in D_{F}, x^{k} \rightarrow x\right\},
$$
where $J(x)$ denotes the Jacobian of $F$ at $x \in D_F$. Obviously, $\partial_{B} F(x)$ may not be a singleton. We say $F$ is BD-regular at $x$ if each $J \in \partial_B F(x)$ is nonsingular.  The Clarke subdifferential of $F$ at $x$ is defined as
\[ \partial F(x) = \mathrm{conv}\left(\partial_{B} F(x)\right),\]
where $\mathrm{conv}(S)$ represents the convex hull of $S$. The mapping $F$ is said to be semismooth at $x$ if
\begin{itemize}
    \item[(a)] $F$ is directionally differentiable at $x$;
    \item[(b)] for all $d$ and $J \in \partial F(x+d)$, it holds that
\[ \| F(x+d) -  F(x) - Jd \| = o(\|d\|), \;\; \text{as } d \rightarrow 0. \] 
\end{itemize} 
Moreover, $F$ is said to be strongly semismooth at $x$ if the equation in (b) is replaced by $\| F(x+d) -  F(x) - Jd \| = O(\|d\|^2)$. We say that $F$ is semismooth if $F$ is semismooth at every $x \in \R^n$.

\subsection{The proximal mapping} The nonlinear equation \eqref{prob:nonsmootheq} discussed in this paper for  \eqref{prob} is constructed from the proximal mappings of $f$ and $h$. Given a proper closed function $\phi: \mathbb{R}^n \rightarrow \bar{\mathbb{R}}$ and a constant $t > 0$, the proximal mapping $\mathrm{prox}_{t \phi}$ is defined as
\begin{equation}
    \mathrm{prox}_{t \phi}(y) {\,=\,}\argmin_{x\in\mathbb{R}^n} \left\{ \phi(x) + \frac{1}{2 t}\|x-y\|_2^2 \right\}, \quad \forall y \in \mathbb{R}^n.
\end{equation}
In general, $\mathrm{prox}_{t \phi}$ may be empty or multi-valued. In this paper, we restrict our attention to functions whose proximal mappings locally are single-valued functions. We say that $\phi$ is prox-bounded \cite[Definition 1.23]{rockafellar2009variational} if there exist $t > 0$ and $y \in \R^n$ such that $\inf_{x\in\mathbb{R}^n} \{ \phi(x) + \frac{1}{2t}\|x-y\|^2 \} > -\infty$. 

Given a point $x$ where $\phi(x)$ is finite, we call $v \in \R^n$ a regular subgradient \cite[Definition 8.3]{rockafellar2009variational} of $\phi$ at $x$, written as $v \in \hat{\partial} \phi(x)$, if
\[ \phi(y) \geq \phi(x) + \iprod{v}{y-x} + o(\|y - x\|), \;\; \forall y \in \R^n. \]
We call $v$ a limiting subgradient \cite[Definition 8.3]{rockafellar2009variational}, written as $v\in \partial \phi(x)$, if there are sequences $x_k \rightarrow x$ with $\phi(x_k) \rightarrow \phi(x)$ and $v_k \in \hat{\partial} \phi(x_k)$ with $v_k \rightarrow v$, and call $v$ a horizon subgradient, written as $v \in \partial^{\infty} \phi(x)$, if $x_k \rightarrow x$ with $\phi(x_k) \rightarrow \phi(x)$ and $v_k \in \hat{\partial} \phi(x_k)$ with $t_k v_k \rightarrow v$ for some $t_k \searrow 0$. 
For prox-regular functions \cite[Definition 13.27]{rockafellar2009variational}, the regular subgradients and subgradients coincide.

\begin{definition}[Prox-regular function] \label{def:prox-regular}
 A function $\phi: \mathbb{R}^n \rightarrow \bar{\mathbb{R}}$ is prox-regular at $\bar{x}$ for $\bar{v}$ if $\phi$ is finite and locally lower semicontinuous at $\bar{x}$ with $\bar{v}\in\partial\phi(\bar{x})$, and there exist constants $\varepsilon > 0$ and $\rho \geq 0$ such that 
 \begin{align} \label{eq:prox-regular} 
  \phi(y) \geq  \phi(x) + \iprod{v}{y-x} - \frac{\rho}{2}\|y-x\|^2, \;\; \forall y\in B(\bar{x},\varepsilon)
  \end{align}
 whenever $v\in\partial\phi(x)$, $\|v-\bar{v}\| < \varepsilon$, $\|x - \bar{x}\| < \varepsilon$, $\phi(x) < \phi(\bar{x}) + \varepsilon$.
We say that $\phi$ is prox-regular at $\bar{x}$ if $\phi$ is prox-regular at $\bar{x}$ for every $\bar{v} \in \partial\phi(\bar{x})$. We say $\phi$ is prox-regular if it is prox-regular at every point of its domain.
\end{definition}

Note that every proper closed convex function is prox-regular \cite[Example 13.30]{rockafellar2009variational}, as well as every $C^2$ smooth function. The concept of prox-regularity is important in studies concerning proximal mappings, due to the following fact \cite[Proposition 13.37]{rockafellar2009variational}.

\begin{theorem} \label{prop:lip-prox-regular}
If $\phi: \mathbb{R}^n \rightarrow \bar{\mathbb{R}}$ is prox-bounded and prox-regular at $\bar{x}$ for $\bar{v}$, then for all sufficiently small $t > 0$, there is a neighborhood of $\bar{x} + t \bar{v}$ on which $\mathrm{prox}_{t \phi}$ is monotone, single-valued and Lipschitz continuous.
\end{theorem}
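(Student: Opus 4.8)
The plan is to follow the classical route to \cite[Proposition~13.37]{rockafellar2009variational}: use prox-regularity to make, for every $t<1/\rho$ (read $1/\rho:=+\infty$ when $\rho=0$), the map $x\mapsto g_y(x):=\phi(x)+\tfrac1{2t}\|x-y\|^2$ into a locally strongly monotone variational problem near $\bar{x}$, and then extract single-valuedness, the Lipschitz modulus, and monotonicity all at once from a single monotonicity inequality. The genuine difficulty is a localization issue: $\mathrm{prox}_{t\phi}$ selects the \emph{global} minimizer of $g_y$, whereas prox-regularity only controls $\phi$ near $\bar{x}$ and its subgradients near $\bar{v}$.

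First I would record the two consequences of Definition~\ref{def:prox-regular} that carry the whole argument. Write $\bar{y}:=\bar{x}+t\bar{v}$ and let $\varepsilon,\rho$ be the constants there, and let $T$ be the $\phi$-attentive $\varepsilon$-localization of $\partial\phi$ around $(\bar{x},\bar{v})$, that is, $T(x)=\{v\in\partial\phi(x):\|v-\bar{v}\|<\varepsilon\}$ when $\|x-\bar{x}\|<\varepsilon$ and $\phi(x)<\phi(\bar{x})+\varepsilon$, and $T(x)=\emptyset$ otherwise. Then \eqref{eq:prox-regular} says exactly that $T+\rho\,\mathrm{id}$ is monotone, i.e. $\iprod{v-v'}{x-x'}\ge-\rho\|x-x'\|^2$ for all $v\in T(x)$, $v'\in T(x')$. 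Second, inserting $x=\bar{x}$, $v=\bar{v}$ into \eqref{eq:prox-regular} and completing the square gives a quadratic-growth estimate $g_y(x)\ge g_y(\bar{x})+\tfrac{1-t\rho}{2t}\|x-\bar{x}\|^2-C(t)\|y-\bar{y}\|$ for $x\in B(\bar{x},\varepsilon)$, where $g_y(\bar{x})$ stays close to $\phi(\bar{x})$ once $\|y-\bar{y}\|$ is small; since $t<1/\rho$ makes $\tfrac{1-t\rho}{2t}>0$, $g_y$ already has a well-shaped local minimum near $\bar{x}$.

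The heart of the proof, and the step I expect to be the main obstacle, is to confine the actual (global) minimizer of $g_y$ to this region. I would first pass to the truncation $\phi_\varepsilon:=\phi+\delta_{\bar{B}(\bar{x},\varepsilon/2)}$: by the quadratic-growth estimate, for $t$ small and $y$ near $\bar{y}$ the constrained minimizer of $g_y$ over $\bar{B}(\bar{x},\varepsilon/2)$ lies strictly inside, hence is a critical point of $g_y$ whose subgradient $\tfrac1t(y-x)$ is forced into $T(x)$ (it is $O(\sqrt{t})$-close to $\bar{v}$, and $\phi(x)$ is $O(t)$-close to $\phi(\bar{x})$, once $\|y-\bar{y}\|$ is small relative to $t$); two such critical points would then contradict monotonicity of $T+\rho\,\mathrm{id}$, so $\mathrm{prox}_{t\phi_\varepsilon}$ is single-valued near $\bar{y}$. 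Next I would show $\mathrm{prox}_{t\phi}=\mathrm{prox}_{t\phi_\varepsilon}$ near $\bar{y}$, using prox-boundedness twice: (i) for $t$ below the prox-threshold a standard coercivity estimate confines all global minimizers of $g_y$ to a ball $\bar{B}(\bar{x},R_0)$ with $R_0$ independent of $t$ and of $y$ near $\bar{y}$; (ii) on the compact annulus $\bar{B}(\bar{x},R_0)\setminus B(\bar{x},\varepsilon/2)$ the function $\phi$ is bounded below by a fixed constant, so a minimizer placed there would pay a penalty of order $1/t$ that, for $t$ below a fixed threshold, exceeds $g_y(\bar{x})$. The subtlety is that these two confinement mechanisms pull in opposite directions as $t\to0$; what makes them compatible is precisely that $R_0$, hence the lower bound for $\phi$ on the annulus, can be chosen $t$-independent.

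Finally I would harvest the estimates. Fix $t$ small and a radius $\delta'>0$ (small relative to $t$) so that, on $B(\bar{y},\delta')$, $\mathrm{prox}_{t\phi}=\mathrm{prox}_{t\phi_\varepsilon}$ equals the unique interior critical point $x(y)$. For $y,y'\in B(\bar{y},\delta')$ put $x=x(y)$, $x'=x(y')$, so $\tfrac1t(y-x)\in T(x)$ and $\tfrac1t(y'-x')\in T(x')$; feeding these into monotonicity of $T+\rho\,\mathrm{id}$ yields $\iprod{y-y'}{x-x'}\ge(1-t\rho)\|x-x'\|^2$. Since $1-t\rho>0$, the right-hand side is nonnegative, which is monotonicity of $\mathrm{prox}_{t\phi}$ on $B(\bar{y},\delta')$; Cauchy--Schwarz turns the same inequality into $\|x-x'\|\le(1-t\rho)^{-1}\|y-y'\|$, i.e. Lipschitz continuity with modulus $(1-t\rho)^{-1}$, and, with $y=y'$, re-confirms single-valuedness. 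The neighborhood in the statement is then $B(\bar{x}+t\bar{v},\delta')$.
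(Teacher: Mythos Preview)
The paper does not prove this statement at all; it is quoted as \cite[Proposition~13.37]{rockafellar2009variational} and used only as a black box throughout. Your proposal is essentially a faithful reconstruction of the Rockafellar--Wets argument, and the outline is sound: the monotonicity of the $\phi$-attentive localization $T+\rho\,\mathrm{id}$, the localization of the global minimizer via prox-boundedness plus the quadratic growth implied by \eqref{eq:prox-regular}, and the final inequality $\iprod{y-y'}{x-x'}\ge(1-t\rho)\|x-x'\|^2$ are exactly the right ingredients.

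Two small remarks on the write-up, neither fatal. First, the parenthetical ``$O(\sqrt{t})$-close to $\bar{v}$'' is not the estimate your own computation gives: from the quadratic-growth bound one gets $\|x-\bar{x}\|\le \tfrac{2}{1-t\rho}\|y-\bar{y}\|$ and hence $\|\tfrac1t(y-x)-\bar{v}\|=O(\tfrac1t\|y-\bar{y}\|)$, which is small precisely when $\|y-\bar{y}\|$ is small relative to $t$ (as you already say). Second, the two ``confinement mechanisms'' do not pull in opposite directions as $t\downarrow 0$: both the coercivity of $g_y$ from prox-boundedness and the $1/t$-penalty on the annulus become stronger as $t$ shrinks, so there is no tension to resolve. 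What does require care is that the admissible neighborhood of $\bar{y}$ shrinks with $t$ (roughly like $t\varepsilon$), which is perfectly compatible with the statement since the neighborhood is allowed to depend on $t$.
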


\subsection{Semismooth systems from  composite problem \eqref{prob}}

There are different ways to construct the nonlinear equation \eqref{prob:nonsmootheq}. We briefly summarize three systems induced by PGM, DRS \cite{xiao2018regularized,li2018semismooth} and ALM \cite{li2020asymptotically,zhao2010newton}, respectively. 

\subsubsection{Natural residual and DRS residual}

Let us first consider the basic step of PGM proposed in \cite{fukushima1981generalized,lions1979splitting,combettes2011proximal,beck2009fast} for solving \eqref{prob} when $f$ is smooth, where, unless otherwise specified, by ``smooth'' we always mean $C^1$ smooth. In the $k$-th iteration, it  updates
\be \label{eq:pgm-it} x_{k+1}  \in \mathrm{prox}_{th}(x_{k} - t\nabla f(x_k)), \ee
where $t > 0$ is a step size. Since $h$ is required to be prox-regular, the proximal mapping ${\rm prox}_{th}$ is single-valued and Lipschitz continuous for a properly chosen $t$ by Proposition \ref{prop:lip-prox-regular}. Hence the iterative scheme \eqref{eq:pgm-it} can be seen as a fixed-point  method to solve the following equation:
\begin{equation} \label{eq:pgm} F_{{\rm PGM}}(x) := x - \mathrm{prox}_{th}(x - t \nabla f(x)) = 0. \end{equation}
We call $F_{\rm PGM}$ the natural residual. 


Another splitting method for solving \eqref{prob} is the DRS method \cite{eckstein1992douglas,li2016douglas,davis2016convergence}, where both $f$ and $h$ can be nonsmooth and nonconvex.
 In the $k$-th iteration, the DRS scheme is 
 \be \label{eq:drs-it} 
 \left\{
 \begin{aligned}
 x_{k+1} & \in \mathrm{prox}_{th}(z_k), \\
 y_{k+1} & \in \mathrm{prox}_{tf}(2x_{k+1} - z_k), \\
 z_{k+1} & = z_k + y_{k+1} - x_{k+1}.
 \end{aligned}
 \right.
 \ee 
 Similar to PGM, when $t$ is chosen properly to make $\mathrm{prox}_{th}$ and $\prox_{tf}$ single-valued and Lipschitz continuous close to points of interest, the DRS method can be regarded as a fixed-point procedure to solve the following equation:
  \begin{equation} \label{eq:drs}  F_{{\rm DRS}}(z) := \mathrm{prox}_{th}(z) - \prox_{tf}(2\mathrm{prox}_{th}(z) - z) = 0. \end{equation}
We call $F_{{\rm DRS}}$ the DRS residual.

Since the proximal mappings are locally Lipschitz continuous for sufficiently small $t > 0$, the residual mappings defined in \eqref{eq:pgm} and \eqref{eq:drs} are also locally Lipschitz continuous. When the proximal mappings ($\prox_{th}$ for \eqref{eq:pgm}, both $\prox_{tf}$ and $\prox_{th}$ for \eqref{eq:drs}) are furthermore semismooth, one can easily verify the semismoothness of $F_{\mathrm{PGM}}$ and $F_{\mathrm{DRS}}$ as well.

Let us clarify the relationship between the stationarity induced by \eqref{eq:pgm} and \eqref{eq:drs} and the classic notion of stationarity defined as follows. 

\vskip 2mm

\begin{definition} For problem \eqref{prob}, we say that $x \in \mathrm{dom}\,\psi$ is stationary if $0 \in \partial f(x) + \partial h(x)$.
\end{definition}

\vskip 2mm
 If $f$ or $h$ is locally Lipschitz, we have from \cite[Exercise 10.10]{rockafellar2009variational} that $\partial \psi(x) \subset \partial f(x)+\partial h(x)$. 
Firstly, consider the case where $f$ is smooth. Based on the definition of the proximal mapping and \cite[Theorem 10.1]{rockafellar2009variational}, every root of the natural residual \eqref{eq:pgm} is stationary, and every root $z$ of the DRS residual satisfies that $\mathrm{prox}_{th} (z)$ is stationary.
In the case where $f$ is nonsmooth, by the definition of the proximal mapping, we can also deduce that $\prox_{th}(z)$, with $z$ being a root of the DRS residual, is stationary. 
 Hence, assuming semismoothness of the proximal mappings, one can apply the SSN method to solve for the roots of both $F_{\rm PGM}$ and $F_{\rm DRS}$ to get the stationary points of \eqref{prob}. From the computational aspect, the residual mapping $F_{\rm PGM}$ is preferred in designing the SSN method for smooth $f$ (as the computation cost of $\nabla f$ is generally much lower than that of ${\rm prox}_{tf}$). For nonsmooth $f$, one needs to use $F_{\rm DRS}$ as $F_{\rm PGM}$  is undefined.

\subsubsection{Gradient mappings of the ALM subproblem} \label{subsubsec:alm}
Consider the case that $f$ is smooth and convex and $h$ is convex. Then, their proximal mappings associated to any $t > 0$ are single-valued and globally Lipschitz continuous with modulus $1$. The SSN augmented Lagrangian methods \cite{zhao2010newton,yang2015sdpnal+,li2018highly} have exhibited superior performance on semidefinite programming and Lasso. The general idea is to split the composite structure by introducing an extra variable and then applying the augmented Lagrangian method, whose each iteration consists of a joint minimization for the primal variables and a gradient ascent step for the dual variables. Specifically, the augmented Lagrangian function for \eqref{prob} is 
\[ L_{t}(x,z) = f(x) + e_{t h}(x - t z) - \frac{t}{2}\|z\|^2, \]
where $e_{t h}(x):=\min_{y \in \R^n}\; h(y) + \frac{1}{2t}\|y-x\|^2$ is the Moreau envelope of $h$.
For a given $z_0$ and a sequence of positive and nonincreasing parameters $\{t_k\}$, the basic iterate steps of the augmented Lagrangian method are  
\be \label{alg:alm}
\left\{ \begin{aligned}
x_{k+1} & = \argmin_{x \in \R^n} \;\varphi(x;t_k, z_k) := L_{t_k}(x, z_k), \\
z_{k+1} & = z_k + \frac{1}{t_k} ({\rm prox}_{t_k h}(x_{k+1} - t_k z_k) - x_{k+1}). 
\end{aligned} \right.
\ee
For the $x$-subproblem, the objective function $\varphi(\cdot; t_k, z_k)$ is convex and smooth with gradient 
\be \label{eq:res-alm}\nabla \varphi(x;t_k, z_k) = \nabla f(x) +  \frac{1}{t_k}(x- t_k z_k - \prox_{t_k h}(x - t_k z_k)).  \ee
Given any fixed constant $t > 0$ and parameter $z \in \R^n$, consider the system
\begin{equation} \label{eq:gradient} 
F_{\rm ALM} (x; z) :=\nabla f(x) +  \frac{1}{t}\big(x- t z - \prox_{t h}(x - t z)\big) = 0. \end{equation}
Then, $x_{k+1}$ is a root of the system \eqref{eq:gradient} with $t = t_k$ and $z = z_k$.
For structured $h$ \cite{zhao2010newton,yang2015sdpnal+,li2018highly}, the gradient mapping  $F_{{\rm ALM}} (\,\cdot\,; z)$ is also semismooth, where the SSN method can be employed. With a slight abuse of notation, we sometimes omit the parameter $z$, and simply write the mapping $F_{{\rm ALM}} (\,\cdot\,; z)$ as $F_{{\rm ALM}}$.

\subsection{Semismooth Newton method}
We describe a general framework of the SSN method for solving equation \eqref{prob:nonsmootheq}. In the $k$-th iteration, we choose an element $J_k$ from $\partial F(x^k)$ and solve the following linear equation
\be \label{eq:ssn} (J_k + \mu_k I)d = - F_k + r_k, \ee
where $F_k = F(x_k)$, $\mu_k \geq 0$ is a shift parameter to ensure certain nonsingularity of $J_k + \mu_k I$, and $r_k$ is the error vector to measure the inexactness. In order to achieve fast convergence, we choose
\be \label{eq:muk} \mu_k = \| F_k\|, \;\; \text{for } k=1, 2, \ldots. \ee
 Note that similar setting of $\mu_k$ is also employed in the LM method for solving \eqref{prob:nonsmootheq} with smooth $F$ \cite{fan2004inexact}.
Denote by $d_k$ the solution of equation \eqref{eq:ssn}. The  (regularized inexact) SSN step is
\be \label{eq:ssn-it} x_{k+1} = x_k + d_k. \ee
For globalization, we need to combine either a line search procedure \cite{zhao2010newton}, or the proximal gradient step or other steps with decrease guarantees (see, e.g., \cite{xiao2018regularized,milzarek2019stochastic,yang2021stochastic}) depending on the specific form of $F$.  Extensive numerical experiments show that the SSN step enjoys fast local convergence. 

\section{Equivalent characterizations of BD-regularity}  \label{sec:bd-regular}

Let $x^*$ be a root of a locally Lipschitz mapping $F$. By \cite[Lemma 2.6]{qi1993convergence}, if the BD-regularity of $F$ holds at $x^*$, then there exists a scalar $c > 0$ and a scalar $\delta > 0$ such that $\|V^{-1}\| \leq c$ for all $V \in \partial_B F(x)$ with $\|x - x^*\| \leq \delta$. Let $\{x_k\}$ be the sequence generated by \eqref{eq:ssn-it} with $J_k \in \partial_B F(x^k)$. 
 If $x_k$ satisfies $\|x_k -x^*\| \leq \delta$, the local superlinear convergence is obtained by assuming the semismoothness of $F$ and $\|r_k\| = o(\|x_k - x^*\|)$. 
According to this result, the verification of  the BD-regularity is important. Hence, we  present equivalent characterizations of the BD-regularity condition for the natural residual \eqref{eq:pgm}, the DRS residual \eqref{eq:drs} and the gradient mapping \eqref{eq:gradient}, and then discuss the condition for the invertibility of the shifted B-Jacobian.

\subsection{Characterization for the natural residual}
In this subsection, we consider the case where $f$ is $C^2$ smooth and $h$ is convex.
For the natural residual \eqref{eq:pgm}, its B-Jacobian at $x$ with nonsingular $I - t \nabla^2\!f(x)$ \cite[Lemma 1]{chan2008constraint} takes the form of
\[ \partial_{B} F_{\mathrm{PGM}} (x) =\{ I - M (I - t\nabla^2 f(x)): M \in \partial_B \mathrm{prox}_{th}(x - t \nabla f(x)) \}. \]
The BD-regularity of $F_{\mathrm{PGM}}$ holds at $x$ if and only if all elements of $\partial_{B} F_{\mathrm{PGM}} (x)$ are invertible. It is shown in \cite{bauschke2011convex,rockafellar2009variational} that $\mathrm{prox}_{th}$ is monotone and nonexpansive. Hence, by \cite[Lemma 3.3.5]{milzarek2016numerical}, $I \succeq M \succeq 0$ for all $M \in \partial_B \mathrm{prox}_{th}(y)$ and $y \in \mathbb{R}^n$, where $A \succeq B$ means $A-B$ is positive semidefinite. For a general $h$, it is hard to verify the BD-regularity of $F_{\mathrm{PGM}}$, but for some structured $h$, we can present an equivalent characterization of BD-regularity using second-order information of $f$. 

\begin{proposition} \label{lemma:BD-regularity}
 Fix any $x \in \mathbb{R}^n$. Suppose that $\nabla^2\!f(x)$ is positive semidefinite, $h$ is convex, and $0 < t < \frac{1}{\lambda_{\max}(\nabla^2 f(x))}$. Then, the BD-regularity of $F_{\mathrm{PGM}}$ holds at $x$ if and only if $\nabla^2\!f(x)$ is positive definite on the subspace $\mathrm{Ker}(I-M)$ for all $M \in \partial_B \mathrm{prox}_{th}(x - t \nabla f(x))$.
\end{proposition}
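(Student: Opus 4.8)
The plan is to fix a single $M \in \partial_B\mathrm{prox}_{th}(x - t\nabla f(x))$, translate the nonsingularity of the corresponding B-Jacobian element into a statement purely about $\nabla^2\!f(x)$, and check that it matches the claimed definiteness condition. Write $H := \nabla^2\!f(x)$ and $P := I - tH$. The hypotheses $H \succeq 0$ and $0 < t < 1/\lambda_{\max}(H)$ give $0 \prec P \preceq I$, and, as recalled just before the statement, every such $M$ is symmetric with $0 \preceq M \preceq I$. By the cited B-Jacobian formula (valid because $P$ is nonsingular), $\partial_B F_{\mathrm{PGM}}(x) = \{\, I - MP \,\}$ as $M$ ranges over $\partial_B\mathrm{prox}_{th}(x - t\nabla f(x))$, so BD-regularity at $x$ says precisely that $I - MP$ is nonsingular for every such $M$. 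On the other hand, since $H \succeq 0$ we have $\langle Hd,d\rangle = 0 \iff Hd = 0$, so ``$\nabla^2\!f(x)$ is positive definite on $\mathrm{Ker}(I-M)$'' is equivalent to ``there is no $d \ne 0$ with $Md = d$ and $Hd = 0$''. Hence it suffices to prove, for each fixed symmetric $M$ with $0 \preceq M \preceq I$, that $I - MP$ is singular if and only if there exists $d \ne 0$ with $Md = d$ and $Hd = 0$.

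The ``if'' direction is immediate: if $Hd = 0$ then $Pd = d$, hence $MPd = Md = d$ and $(I - MP)d = 0$ with $d \ne 0$. For the ``only if'' direction, suppose $(I - MP)d = 0$ with $d \ne 0$ and set $v := Pd$, so $Mv = d$. The crux is the sandwich estimate
\[ \|d\|^2 = \langle Mv, Mv\rangle = \langle M^2 v, v\rangle \;\le\; \langle Mv, v\rangle = \langle d, Pd\rangle \;\le\; \langle d,d\rangle = \|d\|^2, \]
where the first inequality uses $M^2 \preceq M$ (a consequence of $0 \preceq M \preceq I$ via the eigendecomposition of $M$) and the second uses $P \preceq I$. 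Therefore both inequalities are equalities; in particular $\langle d, (I-P)d\rangle = t\langle Hd,d\rangle = 0$, which together with $H \succeq 0$ and $t > 0$ forces $Hd = 0$. Then $Pd = d$, and substituting into $MPd = d$ yields $Md = d$. Ranging over all $M$ and combining with the reductions of the first paragraph gives the equivalence.

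The only genuinely delicate point is the sandwich estimate in the ``only if'' direction: it is what upgrades ``$1$ is an eigenvalue of the generally nonsymmetric matrix $MP$'' to the two clean symmetric conditions $Md = d$ and $Hd = 0$. Everything else — the B-Jacobian formula, the bounds $0 \preceq M \preceq I$ and $0 \prec P \preceq I$, and the equivalence $\langle Hd,d\rangle = 0 \iff Hd = 0$ for $H \succeq 0$ — is either quoted from earlier in the paper or an elementary fact about positive semidefinite matrices.
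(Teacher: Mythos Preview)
Your proof is correct and follows essentially the same approach as the paper: fix $M$, set $v = Pd$ (the paper's $e$), and exploit the semidefiniteness constraints $0 \preceq M \preceq I$ and $H \succeq 0$ to force the two key equalities $d\in\mathrm{Ker}(I-M)$ and $d^\top H d=0$. The paper multiplies the identity $(I-M)Me + tMHMe = 0$ on the left by $e^\top$ and splits it as a sum of two nonnegative terms equal to zero, whereas you package the same computation as the sandwich $\|d\|^2 = \langle M^2 v, v\rangle \le \langle Mv, v\rangle = \langle d, Pd\rangle \le \|d\|^2$; the content is identical.
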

\begin{proof}
 $\Longleftarrow$. 
 Suppose the BD-regularity does not hold. Then there exist a nonzero vector $d \in \R^n$ and $M \in \partial_B \mathrm{prox}_{th}(x - t \nabla f(x))$ satisfying
\be \label{eq:bd-1} \left[I - M (I - t\nabla^2 f(x)) \right]d = (I - M)d + tM \nabla^2 f(x)d = 0. \ee
Let $e = (I - t\nabla^2 f(x)) d$. We have from \eqref{eq:bd-1} that $d = M e$ and
\begin{equation}\label{eq:bd-2}
    (I - M)M e + tM \nabla^2 f(x) M e = 0.
\end{equation}
By \cite[Lemma 3.3.5]{milzarek2016numerical}, $e^{\top} (I - M)M e \geq 0$, 
while $te^{\top } M \nabla^2 f(x) M e \ge 0$ is implied by the positive semidefiniteness of $\nabla^2 f(x)$. Thus, equation \eqref{eq:bd-2} implies that
\begin{equation*}
    (I - M)M e = M \nabla^2 f(x) M e = 0,
\end{equation*}
which means that $d \in \mathrm{Ker}(I-M)$ and  $d^{\top} \nabla^2 f(x) d = 0$. This shows that $\nabla^2 f(x)$ is not positive definite on $\mathrm{Ker}(I-M)$, a contradiction to the give condition.

 $\Longrightarrow$. Suppose the implication does not hold. There exist $M\in\partial_B \mathrm{prox}_{th}(x\!-\!t\nabla f(x))$ and nonzero $d\in{\rm Ker}(I\!-\!M)$ such that $\nabla^2 f(x) d = 0$. Then, noting $(I\!-\!M)d=0$, we have
\begin{align*}
     (I\!-\!M)d+tM\nabla^2f(x)d &= tM\nabla^2f(x)d = 0.
 \end{align*}
 We obtain a contradiction.
\end{proof}

\begin{eg} \label{eg}
    Consider the case $h(x) = \lambda \|x\|_1$.  Let $x^*$ be a stationary point.  It is easy to check that each  element of $\partial_B \mathrm{prox}_{th}(x^* - t \nabla f(x^*))$ is a diagonal matrix $M$ with
    \be \label{eq:jaco-l1} M_{ii} \begin{cases}
        = 1, \;\; & x_i^* \ne 0,\\
        = 0, \;\; & x_i^* = 0 \; \mathrm{and} \; [|\nabla f(x^*)|]_i < \lambda, \\
        \in \{0,1\}, \;\; & x_i^* = 0\; \mathrm{and} \; [|\nabla f(x^*)|]_i = \lambda,
    \end{cases} \ee
    where both $0$ and $1$ can be attained in the last case.
    Then $\mathrm{Ker}(I-M) = \mathrm{Range}(M)$ for each $M \in \partial_B \mathrm{prox}_{th}(x^* - t \nabla f(x^*))$ and
    \begin{equation*}
        \bigcup_{M \in \partial_B \mathrm{prox}_{th}(x^* - t \nabla f(x^*))} \mathrm{Range}(M) = \{v \in \mathbb{R}^n : v_i = 0 ~\text{for all}~ i \notin T\},
    \end{equation*}
    where $T := \{ i \in [n] : x_i^* \ne 0 \} \cup \{ i \in [n]: x_i^* = 0, [|\nabla f(x^*)|]_i = \lambda \}$.
    Hence applying Proposition \ref{lemma:BD-regularity}, we can see that the BD regularity of $F_{\rm PGM}$ holds at $x^*$ if and only if $ \left[\nabla^2 f(x^*)\right]_{TT} \succ 0$, where $A_{TT}$ denotes a submatrix consisting of rows and columns in $T$ of $A$. 
 \end{eg}

\begin{remark} \label{rem:BD-PGM}
In Example \ref{eg}, the positive definiteness of $[\nabla^2 f(x^*)]_{TT}$ corresponds to the strong second-order sufficient condition \cite[Example 5.3.12]{milzarek2016numerical}. Similar results on the characterizations of the BD-regularity have been shown in \cite[Proposition 3.11]{griesse2008semismooth} and \cite[Lemma 4.7]{milzarek2014semismooth}. 
For general $h$, it is proved in \cite[Theorem 5.4.4]{milzarek2016numerical}
that the second-order sufficient condition together with SC implies BD-regularity. When reduced to the $\ell_1$ norm case, these conditions read $[\nabla^2 f(x^*)]_{TT} \succ 0$ and $\{ i \in [n] : x_i^* =0, |[\nabla f(x^*)]_i| = \lambda \} = \emptyset$, which are stronger than that of Proposition \ref{lemma:BD-regularity}. 
\end{remark}
\begin{remark}
For general $h$, e.g. the Euclidean norm, the nuclear norm, the indicator functions of polyhedral sets and the simplex, and spectral functions, since the union of $\mathrm{Ker}(I-M)$ is more complex, its characterization of BD-regularity will be more difficult to interpret. One can refer to \cite{patrinos2014forward} for the expressions of the proximal mappings of various $h$.
\end{remark}

\subsection{Characterization for the DRS residual}
 In this subsection, we consider the case where both $f$ and $h$ are nonsmooth and prox-regular. For the DRS residual \eqref{eq:drs}, from \cite[Theorem 2.6.6]{clarke1983nonsmooth}, its Clarke Jacobian at any $z\in\mathbb{R}^n$ has the following property
\begin{equation}\label{BJac-DRS} 
 \partial F_{\mathrm{DRS}}(z)v = \Big\{\big[M-D(2M-I)\big]v\,:\, M \in \partial\mathrm{prox}_{th}(z), \, D = \nabla \mathrm{prox}_{tf}(y)|_{y = 2\mathrm{prox}_{th}(z) - z}  \Big\} 
\end{equation}
provided that the mapping $\prox_{tf}$ is smooth around $2\mathrm{prox}_{th}(z)-z$.

\begin{proposition} \label{lemma:BD-regularity-drs}
 Fix any point $z \in \mathbb{R}^n$. Suppose that $h$ is convex, and that $\prox_{tf}$ is smooth and its Jacobian at $2\prox_{th}(z) - z$, denoted by $D$, satisfies $0\prec D\preceq I$. Then, the BD-regularity of $F_{\mathrm{DRS}}$ holds at $z$ if and only if $I-D$ is positive definite on the subspace $\mathrm{Ker}(I-M)$ for all $M \in \partial_B \mathrm{prox}_{th}(z)$.
\end{proposition}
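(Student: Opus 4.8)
The plan is to follow the same template as the proof of Proposition~\ref{lemma:BD-regularity}, with the matrix $I-D$ now playing the role that $t\nabla^2\!f(x)$ played there. First I would record the form of the relevant Jacobian: since $\prox_{tf}$ is smooth near $2\prox_{th}(z)-z$, the points where $F_{\mathrm{DRS}}$ is differentiable coincide, near $z$, with those where $\prox_{th}$ is differentiable, and the chain rule together with a limiting argument in \eqref{BJac-DRS} gives
\[
\partial_B F_{\mathrm{DRS}}(z)=\bigl\{\,M-D(2M-I)\,:\,M\in\partial_B\prox_{th}(z)\,\bigr\}.
\]
Thus BD-regularity at $z$ means precisely that $M-D(2M-I)$ is nonsingular for every $M\in\partial_B\prox_{th}(z)$. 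I would also recall that, $h$ being convex, each such $M$ is symmetric with $0\preceq M\preceq I$ (by \cite[Lemma 3.3.5]{milzarek2016numerical}), so that $\|2M-I\|\le 1$, while $D=D^\top$ with $0\prec D\preceq I$ by hypothesis, so that $I-D\succeq 0$ and $D(I-D)\succeq 0$.

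For the ``only if'' direction I would argue contrapositively. Suppose $I-D$ is not positive definite on $\mathrm{Ker}(I-M)$ for some $M\in\partial_B\prox_{th}(z)$. Since $I-D\succeq 0$, there is a nonzero $d\in\mathrm{Ker}(I-M)$ with $d^\top(I-D)d=0$, hence $(I-D)d=0$, i.e.\ $Md=d$ and $Dd=d$. Then
\[
\bigl[M-D(2M-I)\bigr]d \;=\; Md-D(2Md-d)\;=\;d-Dd\;=\;0 ,
\]
so $M-D(2M-I)$ is singular and BD-regularity fails.

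For the ``if'' direction I would again argue contrapositively: assume $[M-D(2M-I)]d=0$ for some $M\in\partial_B\prox_{th}(z)$ and some $d\ne 0$, and show $I-D$ is not positive definite on $\mathrm{Ker}(I-M)$. Put $e:=(2M-I)d=2Md-d$; the equation reads $Md=De$, whence $e=2De-d$ and therefore $d=-(I-2D)e$. Consequently
\[
\|d\|^2=\bigl\|(I-2D)e\bigr\|^2=e^\top(I-4D+4D^2)e=\|e\|^2-4\,e^\top D(I-D)e\;\le\;\|e\|^2 ,
\]
while $\|e\|=\|(2M-I)d\|\le\|d\|$; hence $\|d\|=\|e\|$ and $e^\top D(I-D)e=0$. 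Because $D\succ0$, this forces $(I-D)e=0$, i.e.\ $De=e$, and then $d=-(I-2D)e=e$. So $Md=De=d$, i.e.\ $d\in\mathrm{Ker}(I-M)$, and $(I-D)d=(I-D)e=0$ with $d\ne0$, which is exactly the desired failure of positive definiteness of $I-D$ on $\mathrm{Ker}(I-M)$.

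The only genuinely delicate step is the implication $e^\top D(I-D)e=0\Rightarrow(I-D)e=0$ in the ``if'' part: diagonalizing $D$, the quadratic form is $\sum_j\mu_j(1-\mu_j)|e_j|^2$, which vanishes only when $e$ lies in the eigenspace of $D$ for the eigenvalue $1$, and this uses the strict positivity $0\prec D$ in an essential way — which is precisely why that hypothesis appears. Beyond this, the remaining work is bookkeeping: verifying the B-Jacobian reduction, and recording the Loewner bound $\|2M-I\|\le 1$ and $D(I-D)\succeq0$. I therefore do not anticipate a substantive obstacle, the argument being a faithful mirror of the proof of Proposition~\ref{lemma:BD-regularity}.
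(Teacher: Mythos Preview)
Your argument is correct, and for the ``if'' direction it takes a genuinely different (and somewhat cleaner) route than the paper's. Two points of contrast. First, the paper never asserts the equality $\partial_B F_{\mathrm{DRS}}(z)=\{M-D(2M-I):M\in\partial_B\prox_{th}(z)\}$: it instead passes through the Clarke Jacobian via \eqref{BJac-DRS}, obtains $M\in\partial\prox_{th}(z)$ as a convex combination of $B$-elements, and must at the end extract some $M_k\in\partial_B\prox_{th}(z)$ with $e\in\mathrm{Ker}(I-M_k)$ by a separate convexity argument; your direct use of the $B$-Jacobian sidesteps this extraction. One caveat: the differentiability sets of $F_{\mathrm{DRS}}$ and of $\prox_{th}$ need not literally coincide---$F_{\mathrm{DRS}}$ can be differentiable where $\prox_{th}$ is not when $I-2D$ is singular---but the $B$-Jacobian equality you state is nevertheless correct, since the complement of $D_{\prox_{th}}$ is Lebesgue-null and the $B$-Jacobian is insensitive to excising null sets. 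Second, to reach $e^\top D(I-D)e=0$ the paper multiplies the relation $(I-D)e=(M-I)d$ by $(Md)^\top=(De)^\top$ and splits the resulting sum $e^\top D(I-D)e+d^\top M(I-M)d=0$ into two nonnegative pieces, whereas you obtain it from the two-sided norm sandwich $\|d\|\le\|e\|\le\|d\|$ using $\|2M-I\|\le1$ and $\|I-2D\|\le1$. Both approaches pivot on $D\succ 0$ at exactly the same step; your version avoids the Clarke-to-$B$ extraction, while the paper's avoids committing to the precise form of $\partial_B F_{\mathrm{DRS}}$. The ``only if'' direction is essentially identical to the paper's.
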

\begin{proof}
 $\Longleftarrow$. 
 Suppose the $BD$-regularity of $F_{\rm DRS}$ does not hold at $z$. Then there exists $0 \ne d \in \R^n$ such that $0\in \partial_{B} F_{\mathrm{DRS}}(z)d$, which implies that $0 \in \partial F_{\mathrm{DRS}}(z)d$. Together with \eqref{BJac-DRS}, there exists $M\in \partial\prox_{th}(z)$ such that 
 \begin{equation}\label{temp-equaM0}
  [M-D(2M-I)]d=0.
 \end{equation} 
 Let $e = (2M-I)d$. Then $e-Md = (M\!-\!I)d$ and equality \eqref{temp-equaM0} implies that $Md = De$. Hence,
\be \label{eq:drs-basic-2} (I\!-\!D)e=(M\!-\!I)d, \ee
and $e\ne 0$ (otherwise we will have $d = 0$. In fact, if $e =0$, then it holds $(M-I)d = 0$ and $Md = 0$, which gives $d=0$). Multiplying both sides of equation \eqref{eq:drs-basic-2} by $(Md)^\top$ leads to
\[ (Md)^\top(I\!-\!D)e+ d^\top M(I\!-\!M)d=0. \]
Together with $Md = De$, it follows that
$e^\top D(I\!-\!D)e + d^\top M(I\!-\!M)d=0$. From the given assumption on $D$, $e^\top D(I\!-\!D)e\ge 0$, while by the convexity of $h$ and \cite[Lemma 3.3.5]{milzarek2016numerical}, $d^\top M(I\!-\!M)d\ge 0$. 
Thus,
\[ e^\top D(I\!-\!D)e = d^\top M(M\!-\!I)d = 0. \]
Recall that $I\succeq D\succ 0$ and $I\succeq M\succeq 0$. So 
$D(I\!-\!D)e=0$ and $M(M\!-\!I)d = 0$. Since $D$ is assumed to be positive definite, the former is equivalent to $(I-D)e=0$. Thus,  
\[ (I\!-\!D)e = M(M\!-\!I)d = 0. \]
Using again the equation $Md = De$, we deduce that 
$De=e$ and $M(M\!-\!I)d = (M\!-\!I)De =Me - e = 0$, therefore $e = Me = De$. This means that $0 \ne e \in \mathrm{Ker}(I\!-\!M)$.
Recall that $M\in\partial{\rm prox}_{th}(z)$. There exist an integer $m\ge 1$, 
$\alpha_1\ge 0,\ldots,\alpha_m\ge 0$ with $\sum_{i=1}^m\alpha_i=1$, and 
$M_i\in\partial_{B}{\rm prox}_{th}(z)$ for $i=1,\ldots,m$ such that $M=\sum_{i=1}^m\alpha_iM_i$. Together with $e \in \mathrm{Ker}(I\!-\!M)$, it follows that 
\[
 \sum_{i=1}^m\alpha_i\langle e,(I-M_i)e\rangle=0. 
 \]
 Recall that $I-M_i\succeq 0$ for each $i\in\{1,\ldots,m\}$. We have $\alpha_i\langle e,(I-M_i)e\rangle=0$ for each $i\in\{1,\ldots,m\}$. 
Along with $\sum_{i=1}^m\alpha_i=1$, there necessarily exists an index $k\in\{1,2,\ldots,m\}$ such that $\alpha_k>0$ and $\langle e,(I-M_k)e\rangle=0$. This means that $(I-M_k)e=0$, i.e., $0\ne e\in{\rm Ker}(I-M_k)$. However, $(I-D)e=0$. Thus, we obtain a contradiction to the given assumption. The conclusion in this direction follows.

$\Longrightarrow$. 
Suppose that there exists $M\in\partial_B \mathrm{prox}_{th}(z)$ such that $I\!-\!D$ is not positive definite on  $\mathrm{Ker}(I\!-\!M)$. Then, there is $0\ne d\in\mathrm{Ker}(I\!-\!M)$ 
such that $(I\!-\!D)d = 0$. Consequently, 
\[
 [M\!-\!D(2M\!-\!I)]d = Md\!-\!Dd = d\!-\!Dd = 0,
 \]
 which yields a contradiction to the BD-regularity of $F_{\rm DRS}$ at $z$ by invoking the following inclusion
\begin{equation*}
 \partial_{B} F_{\mathrm{DRS}}(z)\supset\Big\{\big[M-D(2M-I)\big]\,:\, M \in \partial_{B}\mathrm{prox}_{th}(z), \, D = \nabla \mathrm{prox}_{tf}(y)|_{y = 2\mathrm{prox}_{th}(z) - z}  \Big\}. 
\end{equation*}
 Thus, the implication in this direction follows. The proof is completed.
\end{proof}

\begin{remark}
We note that the assumption on the positive definiteness of the Jacobian of $\mathrm{prox}_{tf}$ holds for any $C^2$ smooth and convex function $f$ with $L$-Lipschitz continuous gradient and $t < 1/L$.
 To the best of our knowledge, we are the first to provide an equivalent characterization of the BD-regularity of $F_{\rm DRS}$. This characterization can be effectively utilized to verify the BD-regularity employed in DRS-based SSN methods \cite{xiao2018regularized,li2018semismooth}.
\end{remark}
\begin{eg} \label{eg:lasso-drs}
Consider the case when $h(x) = \lambda\|x\|_1$ and $f$ is $C^2$ smooth and convex. For a root $z^*$ of $F_{\rm DRS}$, let $x^* = \prox_{th}(z^*)$ and define $T := \{ i \in [n] : x_i^* \ne 0 \} \cup \{ i \in [n]: x_i^* = 0, [|\nabla f(x^*)|]_i = \lambda \}$. It follows from the smoothness of $f$ and the definition of $\mathrm{prox}_{tf}$ that $z^* = x^* - t\nabla f(x^*)$. 
Furthermore, if $t < 1 /\lambda_{\max}(\nabla^2 f(x^*))$, we have
\[ \begin{aligned}
\nabla \prox_{tf}(y)|_{y = 2\prox_{th}(z^*) - z^*} & = \left( I + t\nabla^2 f((\mathrm{id} + t \nabla f)^{-1}(x^* + t \nabla f(x^*))) \right)^{-1} \\
& = \left( I + t\nabla^2 f(x^*) \right)^{-1}.
\end{aligned}
\]
From Proposition \ref{lemma:BD-regularity-drs} and Example \ref{eg}, the BD-regularity of $F_{\text{DRS}}$ holds at $z^*$ if and only if 
\[ d^\top \left[I - \left( I + t\nabla^2 f(x^*) \right)^{-1} \right]d > 0 \;\; \mathrm{for~all~  nonzero~} d \mathrm{~with}\, \supp\{d\} \text{$\subseteq$} T.  \]
Denote the eigenvalue decomposition of $ \nabla^2 f(x^*)$ by $\nabla^2 f(x^*) = U\Lambda U^\top$. We have
\be \label{eq:drs-4} d^\top (I - (I + t\nabla^2 f(x^*))^{-1}) d = d^\top U (I - (I + t \Lambda)^{-1}) U^\top d = t \sum_{i=1}^n \frac{\lambda_i}{1 + t\lambda_i} (u_i^\top d)^2 > 0.  \ee
Because of the fact $\lambda_i \geq 0, \; i=1, 2,\ldots, n$, inequality \eqref{eq:drs-4} holds if and only if $\lambda_j (u_j^\top d)^2 > 0$ for some $j\in\{1,2,\ldots,n\}$. This means that $d^\top \nabla^2 f(x^*) d > 0$. Therefore, the BD-regularity of $F_{\text{DRS}}$ holds at $z^*$ if and only if
$ [\nabla^2f(x^*)]_{TT} \succ 0$. 
\end{eg}

\subsection{Characterization for the gradient mapping}
As introduced in Section \ref{subsubsec:alm}, for \eqref{prob} with $C^2$ smooth and convex $f$ and convex $h$, given a fixed constant $t > 0$ and parameter $z \in \R^n$, the B-Jacobian of $F_{\rm ALM}(\cdot;z)$ is given by
\[ \partial_B F_{\mathrm{ALM}}(x; z) =\{\nabla^2\!f(x) + (I - M)/t: M \in \partial_B \prox_{t h}(x - t z)\}.  \]

\begin{proposition} \label{lemma:BD-regularity-alm}
 Fix a parameter $z\in \R^n$. For any point $x\in \mathbb{R}^n$, the BD-regularity of $F_{\mathrm{ALM}}(\cdot;z)$ holds at $x$ if and only if $\nabla^2 f(x)$ is positive definite on the subspace $\mathrm{Ker}(I-M)$ for all $M \in \partial_B \mathrm{prox}_{t h}(x - t z)$.
\end{proposition}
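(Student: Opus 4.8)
The plan is to reduce the claim to an elementary fact about sums of positive semidefinite matrices. By the displayed formula for $\partial_B F_{\mathrm{ALM}}(x;z)$, BD-regularity of $F_{\mathrm{ALM}}(\cdot;z)$ at $x$ is, by definition, equivalent to the nonsingularity of $\nabla^2 f(x) + (I-M)/t$ for every $M \in \partial_B \prox_{th}(x - tz)$. Since $f$ is convex and $C^2$, we have $\nabla^2 f(x) \succeq 0$; and since $h$ is convex, $\prox_{th}$ is monotone and nonexpansive, so by \cite[Lemma 3.3.5]{milzarek2016numerical} every $M \in \partial_B \prox_{th}(x-tz)$ satisfies $I \succeq M \succeq 0$, whence $(I-M)/t \succeq 0$ (here $t>0$ is the fixed constant). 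Thus, for each fixed $M$, we are asking whether the sum of two positive semidefinite matrices is nonsingular.

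The second step is the elementary observation that, for positive semidefinite matrices $A$ and $B$, the matrix $A+B$ is nonsingular if and only if $\mathrm{Ker}(A) \cap \mathrm{Ker}(B) = \{0\}$: if $d^\top (A+B)d = 0$ for some $d \neq 0$, then $d^\top A d = d^\top B d = 0$, and positive semidefiniteness forces $Ad = Bd = 0$; conversely any common kernel vector lies in $\mathrm{Ker}(A+B)$. Applying this with $A = \nabla^2 f(x)$ and $B = (I-M)/t$, and noting that $\mathrm{Ker}((I-M)/t) = \mathrm{Ker}(I-M)$, we obtain that $\nabla^2 f(x) + (I-M)/t$ is nonsingular if and only if $\mathrm{Ker}(\nabla^2 f(x)) \cap \mathrm{Ker}(I-M) = \{0\}$.

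The final step is to rephrase this kernel condition: because $\nabla^2 f(x) \succeq 0$, for any $d \in \mathrm{Ker}(I-M)$ we have $d^\top \nabla^2 f(x) d = 0$ if and only if $\nabla^2 f(x) d = 0$, so $\mathrm{Ker}(\nabla^2 f(x)) \cap \mathrm{Ker}(I-M) = \{0\}$ is precisely the statement that $\nabla^2 f(x)$ is positive definite on the subspace $\mathrm{Ker}(I-M)$. Taking the conjunction over all $M \in \partial_B \prox_{th}(x-tz)$ yields the proposition. There is no genuine obstacle here; the argument is essentially the ``$\Longleftarrow$'' direction of Proposition \ref{lemma:BD-regularity} with the change of variables $e = (I - t\nabla^2 f(x))d$ removed, and the only points requiring care are the invocations of the semidefiniteness bounds $\nabla^2 f(x) \succeq 0$ and $I \succeq M \succeq 0$ and the trivial-common-kernel criterion for the sum of two positive semidefinite matrices.
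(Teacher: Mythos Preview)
Your proposal is correct and is precisely what the paper intends: the paper's own proof simply reads ``follows the same arguments as in Proposition~\ref{lemma:BD-regularity}'', and your write-up is exactly that adaptation, together with the (correct) observation that the ALM Jacobian $\nabla^2 f(x) + (I-M)/t$ is a plain sum of two symmetric positive semidefinite matrices, so the substitution $e = (I - t\nabla^2 f(x))d$ needed in the PGM case is unnecessary here.
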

\begin{proof}
The proof follows the same arguments as in Proposition \ref{lemma:BD-regularity}.
\end{proof}

\begin{remark}
Note that $z^* = \nabla f(x^*)$ for any limiting point $(x^*, z^*)$ of the sequence $\{(x_k, z_k)\}$ generated by ALM \eqref{alg:alm}. When $z = \nabla f(x)$, it is easy to see by Proposition \ref{lemma:BD-regularity} and Proposition \ref{lemma:BD-regularity-alm} that the BD regularity of $F_{\mathrm{ALM}} (\,\cdot\, ; z)$ at point $x$ coincides with that of $F_{\rm PGM}$. Then by Remark \ref{rem:BD-PGM}, the BD-regularity of $F_{\rm ALM}(\,\cdot\, ; z)$, with $z = \nabla f(x^*)$,
holds at $x^*$  if the second-order sufficient condition and SC are satisfied at $(x^*, \nabla f(x^*))$ \cite[Theorem 5.4.4]{milzarek2016numerical}. We note that the specific condition for BD-regularity has also been analyzed in semidefinite programming \cite{zhao2010newton}.
\end{remark}

\subsection{Invertibility of the shifted B-Jacobian}
 Let $F$ be any residual mapping chosen from $F_{\rm PGM}$, $F_{\rm DRS}$, and $F_{\rm ALM}$. For the well-posedness of the SSN method \eqref{eq:ssn}, we need the following invertibility condition for the shifted B-Jacobian.
\begin{assum} \label{assum:invert-jacobian}
    There exists $b_2>0$ such that for all $x \in \bar{B}(x^*,b_2)$, it holds that \be \label{eq:eigen-J} \|(J(x) + \mu(x)I)^{-1}\| \leq \mu(x)^{-1}, \ee
where $\mu (x):= \|F(x)\|$ and $J(x)$ is any element of the B-Jacobian of $F$ at $x$.
\end{assum}

A sufficient condition for Assumption \ref{assum:invert-jacobian} is that ${\rm Re}(\lambda(J(x))) \subset (-\infty, -\delta] \cup [0, \infty)$ for all $x \in \bar{B}(x^*,b_2)$ and $J(x) \in \partial_B F(x)$, where $\delta > 0$ is a constant, $\lambda(A)$ is the set of eigenvalues of $A$, and ${\rm Re}(a)$ denotes the real part of a complex number $a$. In fact, from the Lipschitz continuity, we have $\mu(x) =\|F(x)\| \leq L\|x - x^*\| \leq Lb$ for all $ x \in \bar{B}(x^*,b)$. Without loss of generality, we assume $b \leq \frac{\delta}{2L}$. Then, it holds that
$ |{\rm Re}(\lambda(J(x) + \mu(x) I))| \geq \mu(x)$,  $\forall x \in \bar{B}(x^*,b)$,  
which gives \eqref{eq:eigen-J}. 
Note that the real parts of the eigenvalues of a monotone operator $F$ are always nonnegative since every element of its B-Jacobian is positive semidefinite on $\bar{B}(x^*, b)$ \cite[Proposition 2.1]{schaible1996generalized}.   We further note  \cite[Proposition 2.3]{xiao2018regularized} that the local monotonicity of $F_{{\rm PGM}}$ and $F_{{\rm DRS}}$ on $\bar{B}(x^*,b)$ holds if both $f$ and $h$ are  locally convex around $x^*$ and ${\rm prox}_{tf}(x^*)$, respectively. Thus, the local convexity of $f$ and $h$ is a sufficient condition for Assumption \ref{assum:invert-jacobian}. It is worth mentioning that the real parts of the eigenvalues of the B-Jacobian satisfying \eqref{eq:eigen-J} can be strictly negative, as long as they are bounded from above by $-\delta$, which allows $f$ or $h$ to be nonconvex on $\bar{B}(x^*,b)$.

For illustration of Assumption \ref{assum:invert-jacobian}, consider the cases when $f$ is $C^2$ smooth and $h(x) = \lambda \|x\|_1$. We first present the following lemma about a local constancy property of the Jacobian of ${\rm prox}_{th}$.
\begin{lemma} \label{lemma:const}
Consider \eqref{prob} with $C^2$ smooth $f$ and $h(x) = \lambda \|x\|_1$. For a root $x^*$ of $F_{{\rm PGM}}$, suppose that SC holds at $x^*$. Then, there exists a neighborhood $\bar{B}(x^*,b)$ such that the Jacobian of ${\rm prox}_{th}$ is constant on $\{x - t \nabla f(x): x \in \bar{B}(x^*,b)\}$.  
\end{lemma}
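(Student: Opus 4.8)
The plan is to exploit the explicit formula for the proximal mapping of the scaled $\ell_1$ norm -- the coordinatewise soft-thresholding operator -- together with strict complementarity, in order to identify the ``active pattern'' at $x^*$ and then propagate it to a whole neighborhood by continuity. First I would record that, for $h=\lambda\|\cdot\|_1$, one has $[{\rm prox}_{th}(y)]_i=\sign(y_i)\max\{|y_i|-t\lambda,\,0\}$, so ${\rm prox}_{th}$ is differentiable at $y$ exactly when $|y_i|\ne t\lambda$ for every $i\in[n]$, in which case its Jacobian is the diagonal $0$--$1$ matrix $\diag(\mathbf 1_{\{i\,:\,|y_i|>t\lambda\}})$. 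Hence it suffices to produce a ball $\bar B(x^*,b)$ and a fixed index set $S\subseteq[n]$ so that, writing $y=x-t\nabla f(x)$, we have $|y_i|>t\lambda$ for $i\in S$ and $|y_i|<t\lambda$ for $i\notin S$, for every $x\in\bar B(x^*,b)$; then the Jacobian of ${\rm prox}_{th}$ is the constant matrix $\diag(\mathbf 1_S)$ on the image set.

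Next I would evaluate these quantities at $x^*$ itself. Since $x^*$ is a root of $F_{\rm PGM}$ it is stationary (as noted after the definition of stationarity in Section~\ref{sec:preli-ssn}), so $-\nabla f(x^*)\in\lambda\,\partial\|x^*\|_1$; equivalently $\nabla f(x^*)_i=-\lambda\sign(x^*_i)$ whenever $x^*_i\ne 0$, and $|\nabla f(x^*)_i|\le\lambda$ whenever $x^*_i=0$. Put $y^*:=x^*-t\nabla f(x^*)$ and $S:=\supp(x^*)$. For $i\in S$ one computes $y^*_i=\sign(x^*_i)(|x^*_i|+t\lambda)$, so $|y^*_i|=|x^*_i|+t\lambda>t\lambda$. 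For $i\notin S$ one has $y^*_i=-t\nabla f(x^*)_i$, so $|y^*_i|=t|\nabla f(x^*)_i|$, and here strict complementarity is precisely the statement that $|\nabla f(x^*)_i|<\lambda$, giving $|y^*_i|<t\lambda$. Thus all of the required inequalities hold \emph{strictly} at $x^*$.

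Finally, the map $x\mapsto x-t\nabla f(x)$ is continuous (indeed $C^1$, since $f$ is $C^2$), so each of the finitely many strict scalar inequalities $|y^*_i|>t\lambda$ ($i\in S$) and $|y^*_i|<t\lambda$ ($i\notin S$) persists on a sufficiently small ball around $x^*$; taking $b$ to be the minimum of these finitely many radii yields a single neighborhood $\bar B(x^*,b)$ on which the pattern, hence the Jacobian $\diag(\mathbf 1_S)$ of ${\rm prox}_{th}$ evaluated at $x-t\nabla f(x)$, is constant, which is the claim. I do not expect a genuine obstacle; the one point that must not be glossed over is that strict complementarity is really used: without it, a coordinate with $x^*_i=0$ and $|\nabla f(x^*)_i|=\lambda$ would give $|y^*_i|=t\lambda$, a kink of the soft-thresholding operator, and constancy of the Jacobian near $y^*$ could fail.
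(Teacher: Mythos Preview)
Your proposal is correct and follows essentially the same approach as the paper's proof: both use strict complementarity to rule out the boundary case $|x^*_i-t[\nabla f(x^*)]_i|=t\lambda$, show that the active/inactive partition at $x^*$ is characterized by the strict inequalities $|y^*_i|\gtrless t\lambda$, and then invoke continuity of $x\mapsto x-t\nabla f(x)$ to propagate the pattern to a ball. The paper's writeup is slightly terser and phrases the partition via the index sets $I_1(x^*)$ and $I_2(x^*)$, but the argument is the same.
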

\begin{proof}
Note that SC implies $\{i\in [n]: x^*_i = 0, \; |[\nabla f(x^*)]_i| = \lambda \}$ is empty. Define $I_1(x^*) = \{i \in [n]: x^*_i \ne 0\}$ and $I_2(x^*) = \{i \in [n] : x^*_i =0, \; |[\nabla f(x^*)]_i| < \lambda \}$. Then, $I_1(x^*) \cup I_2(x^*) = [n]$. We therefore deduce that $x^*_i \ne 0$ if and only if $|x^*_i - t [\nabla f(x^*)]_i| > t\lambda$. 
We can equivalently rewrite $I_1(x^*)$ as $I_1(x^*) = \{ i \in [n]: |x^*_i - t [\nabla f(x^*)]_i| > t\lambda \}$. Similarly, $I_2(x^*)$ has the equivalent formulation $I_2(x^*) = \{i\in [n]: |x^*_i - t [\nabla f(x^*)]_i| < t\lambda \}.$ Hence, by the smoothness of $f$, there exists a neighborhood $\bar{B}(x^*,b)$ such that $I_1(x) = I_1(x^*) $ and $I_2(x) = I_2(x^*)$ for all $x \in \bar{B}(x^*,b)$. This  together with \eqref{eq:jaco-l1} gives the desired result.
\end{proof}

With the above lemma, we are able to relate Assumption \ref{assum:invert-jacobian} to the requirements on $\nabla^2 f(x)$. 
\begin{theorem} \label{thm:eigen}
 Consider \eqref{prob} with $C^2$ smooth $f$ and $h(x) = \lambda \|x\|_1$. For a root $x^*$ of $F_{{\rm PGM}}$, suppose that SC holds at $x^*$. Define $\revise{\Omega} = \{i \in [n]: x^*_i \ne 0\}$.
If the eigenvalues $\lambda([\nabla^2 f(x)]_{\revise{\Omega \Omega}}) \in (-\infty, -\delta] \cup [0, \infty), \; \forall x \in \bar{B}(x^*, b)$ for some constants $\delta, b >0$, then the Assumption \ref{assum:invert-jacobian} holds. 
\end{theorem}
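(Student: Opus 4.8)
The plan is to use \Cref{lemma:const} to freeze the active set of ${\rm prox}_{th}$ near $x^*$, which turns $F_{\rm PGM}$ locally into a $C^1$ map with a block-triangular Jacobian, and then to read off the spectrum of that Jacobian and invoke the sufficient condition for \Cref{assum:invert-jacobian} recorded in the paragraph following that assumption. Concretely, since SC is in force the ambiguous branch in \eqref{eq:jaco-l1} is empty, so with $\Omega=\{i\in[n]:x_i^*\neq0\}$, \Cref{lemma:const} yields a radius $b'>0$ — which we may also take no larger than the $b$ appearing in the hypothesis — such that on $\bar B(x^*,b')$ the index sets $I_1(x)\equiv\Omega$ and $I_2(x)\equiv\Omega^c$ are frozen. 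Hence on $\bar B(x^*,b')$ the composition $x\mapsto{\rm prox}_{th}(x-t\nabla f(x))$ agrees with the affine selection of soft-thresholding determined by the frozen sign pattern (its $\Omega$-block is $(x-t\nabla f(x))_\Omega - t\lambda\,\sign(x^*_\Omega)$ and its $\Omega^c$-block is $0$), so $F_{\rm PGM}$ is $C^1$ on $B(x^*,b')$ and there $\partial_B F_{\rm PGM}(x)=\{J(x)\}$ with $J(x)=I-M(I-t\nabla^2 f(x))$, where $M$ is the orthogonal projection onto the coordinates indexed by $\Omega$ — precisely the constant value of $\partial_B{\rm prox}_{th}$ furnished by \Cref{lemma:const} and \eqref{eq:jaco-l1}.

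Next I would exploit this block structure. Ordering the coordinates so that those in $\Omega$ come first, $M=\left[\begin{smallmatrix}I&0\\0&0\end{smallmatrix}\right]$, hence
\[ J(x)=\begin{bmatrix} t\,[\nabla^2 f(x)]_{\Omega\Omega} & t\,[\nabla^2 f(x)]_{\Omega\Omega^c}\\ 0 & I\end{bmatrix}, \]
which is block upper triangular, so its spectrum is $\lambda\!\left(t\,[\nabla^2 f(x)]_{\Omega\Omega}\right)\cup\{1\}$. Since $[\nabla^2 f(x)]_{\Omega\Omega}$ is symmetric these eigenvalues are real; by the hypothesis they lie in $(-\infty,-\delta]\cup[0,\infty)$, multiplying by $t>0$ keeps them in $(-\infty,-t\delta]\cup[0,\infty)$, and the remaining eigenvalue $1$ is positive. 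Therefore ${\rm Re}(\lambda(J(x)))\subset(-\infty,-t\delta]\cup[0,\infty)$ for all $x\in B(x^*,b')$ and all $J(x)\in\partial_B F_{\rm PGM}(x)$.

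This is exactly the sufficient condition for \Cref{assum:invert-jacobian} stated after that assumption, with $\delta$ there replaced by $t\delta$: picking a local Lipschitz modulus $L$ of $F_{\rm PGM}$ near $x^*$, bounding $\mu(x)=\|F_{\rm PGM}(x)\|\le L\|x-x^*\|$, and shrinking the radius to $b_2:=\min\{b',\,t\delta/(2L)\}$ gives $|{\rm Re}(\lambda(J(x)+\mu(x)I))|\ge\mu(x)$ on $\bar B(x^*,b_2)$, which is \eqref{eq:eigen-J}. I expect the only real work to be the first step — verifying that the frozen active set makes $F_{\rm PGM}$ genuinely $C^1$ in a full neighborhood of $x^*$, so that $\partial_B F_{\rm PGM}$ collapses to the single matrix $J(x)$ and its off-diagonal block is pinned down; once the block-triangular form is available the spectral statement is immediate, and the passage from the spectral condition to the resolvent bound \eqref{eq:eigen-J} is precisely what has already been argued after \Cref{assum:invert-jacobian}.
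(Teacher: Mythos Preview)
Your proposal is correct and follows essentially the same approach as the paper: invoke \Cref{lemma:const} to freeze the Jacobian of ${\rm prox}_{th}$ near $x^*$, write $J(x)$ in the block upper-triangular form $\left[\begin{smallmatrix} t[\nabla^2 f(x)]_{\Omega\Omega} & t[\nabla^2 f(x)]_{\Omega\Omega^c}\\ 0 & I\end{smallmatrix}\right]$, read off the spectrum as $\lambda(t[\nabla^2 f(x)]_{\Omega\Omega})\cup\{1\}\subset(-\infty,-t\delta]\cup[0,\infty)$, and then apply the sufficient condition recorded after \Cref{assum:invert-jacobian}. The paper's proof is terser---it does not spell out why $\partial_B F_{\rm PGM}$ collapses to a singleton or the final radius-shrinking argument---but your additional detail on those points is sound and does not constitute a different route.
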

\begin{proof}
By Lemma \ref{lemma:const}, we can assume without loss of generality that the Jacobian ${\rm prox}_{th}$ is constant on $\{x - t \nabla f(x): x \in \bar{B}(x^*,b)\}$. The Jacobian of $F_{{\rm PGM}}$ is 
\[ J(x) = \left(\begin{array}{cc}
    t [\nabla^2 f(x)]_{\revise{\Omega \Omega}} & t[\nabla^2 f(x)]_{\revise{\Omega \Omega^c}}  \\
    0 & I 
\end{array} \right), \quad \text{for all } x \in \bar{B}(x^*,b). \]
Its eigenvalues consists of the eigenvalues of $t [\nabla^2 f(x)]_{\revise{\Omega \Omega}}$ and $1$. Hence, if $\lambda([\nabla^2 f(x)]_{\revise{\Omega \Omega}})$ $\subset (-\infty, -\delta] \cup [0, \infty), \; \forall x \in \bar{B}(x^*, b)$, then $\lambda(J(x)) \subset (-\infty, -t\delta] \cup [0, \infty)$. Hence, for the choice of $\mu(x) =\|F(x)\|$, Assumption \ref{assum:invert-jacobian} holds.
\end{proof}

\begin{remark}
Theorem \ref{thm:eigen} presents a scenario where the natural residual $F_{{\rm PGM}}$ of a nonconvex problem \eqref{prob} satisfies Assumption \ref{assum:invert-jacobian}. Let us note that the requirement on $\lambda([\nabla^2 f(x)]_{\revise{\Omega \Omega}})$ is satisfied if $\nabla^2 f(x)$ is constant on $\bar{B}(x^*, b)$, which corresponds to the case where $f$ is a quadratic function.
\end{remark}

\section{Local smoothness} \label{sec:sc}

The existing proofs of the superlinear or quadratic convergence of the SSN method often require the BD-regularity condition \cite{qi1993convergence,pang1993nonsmooth}, which implies the isolatedness of the minimizer. It is a strong condition that may not hold in practice. For the cases where the solution set contains nonisolated points, there exist extensive literatures using error bound \cite{luo1993error,fan2004inexact,yue2019family}, Kurdyka-\L ojasiewicz property \cite{attouch2010proximal}, and Polyak-\L ojasiewicz property \cite{karimi2016linear}, among others, to establish  superlinear convergence rates of Newton-type methods. The analysis in most of the existing works relies on the mapping $F$ being locally smooth. In this section, we present sufficient conditions under which such local smoothness holds for the three systems \eqref{eq:pgm}, \eqref{eq:drs}, and \eqref{eq:gradient},  respectively. 

\subsection{Local smoothness of proximal mappings} \label{subsec:smooth-prox}
 To derive the local smoothness of the residual mappings in \eqref{eq:pgm}, \eqref{eq:drs}, and \eqref{eq:gradient}, 
 we first take a closer look at conditions to guarantee the local smoothness of proximal mappings.

\subsubsection{Partial smoothness}
The concept of partial smoothness \cite{lewis2002active, lewis14optimality, lewis16generic} is useful in establishing the smoothness of proximal mappings. In \cite{daniilidis06geometric}, the authors proved the smoothness of proximal mappings under an stronger version of the partial smoothness condition, which was introduced in \cite{lewis2002active}. Later on, Lewis et al. reintroduced partial smoothness in \cite{lewis14optimality} as the following weaker condition, and showed in \cite{lewis16generic} that it holds generically for all semialgebraic functions. 

\vskip 2mm
\begin{definition}[$C^p$-partial smoothness]\label{def-psmooth}
 Consider a proper closed function $\phi\!:\R^n\rightarrow \bar{\mathbb{R}}$ and a $C^p$ $(p \ge 2)$ embedded submanifold $\Mcal$ of $\R^n$. The function $\phi$ is said to be $C^p$-partly smooth at $x \in \Mcal$ for $v \in \partial \phi(x)$ relative to $\Mcal$ if
    \begin{itemize}
        \item[(i)] Smoothness: $\phi$ restricted to $\mathcal{M}$ is $C^{p}$-smooth near $x$.
        \item[(ii)] Prox-regularity: $\phi$ is prox-regular at $x$ for $v$. 
        
        \item[(iii)] Sharpness: $\mathrm{par}\, \hat{\partial} \phi(x) = N_{\Mcal} (x)$, where $N_{\Mcal} (x)$ is the normal space to $\Mcal$ at $x$.
        \item[(iv)] Continuity: There exists a neighborhood $V$ of $v$ such that the set-valued mapping $V \cap \partial \phi$ is inner semicontinuous at $x$ relative to $\mathcal{M}.$
    \end{itemize}
\end{definition}

\vskip 2mm
\begin{remark}
 Note that the sharpness condition (iii) is sometimes written as $\mathrm{par}\, \partial_P \phi(x) = N_{\Mcal} (x)$, where $\partial_P \phi(x)$ denotes the set of proximal subgradients of $\phi$ at point $x$. When $\phi$ is prox-regular at $x$ for $v$, by definition there is a neighborhood $V$ of $v$ such that $\hat{\partial}\phi (x)\cap V=\partial_{p} \phi (x)\cap V$, which along with the convexity of the sets $\hat{\partial} \phi(x)$ and $\partial_p\phi(x)$ implies that $\mathrm{par}\, \hat{\partial} \phi(x) = \mathrm{par}\, \partial_p\phi(x)$. For a reference on proximal subgradients, see \cite{rockafellar2009variational}.
\end{remark}
\vskip 2mm

Drusvyatskiy et al. \cite{lewis14optimality} showed that for a set $S \subset \R^n$, assuming the $C^2$-partial smoothness of the indicator function $\delta_S$ at a point $x^* \in S$ for a normal vector $v^*$ plus a relative interior point condition, the projection mapping onto $S$ is smooth around $x^* + t v^*$, where $t$ is any sufficiently small positive constant. The authors also note that this result may be extended to the proximal mappings of partly smooth functions. We provide a proof below for such an extension, following a similar scheme as in \cite{lewis08projection, lewis14optimality}. Similar to \cite[Proposition 9.7]{lewis14optimality}, we also require an extra relative interior condition: $v^* \in \mathrm{ri}\, \hat \partial \phi (x^*)$.
 \begin{lemma} \label{lemma:prox-smooth}
  Consider a proper closed function $\phi\!:\mathbb{R}^n \rightarrow \bar{\mathbb{R}}$ that is prox-bounded and $C^p$-partly smooth $(p \ge 2)$ at $x^*$ for $v^*$ relative to $\mathcal{M}$. Suppose $v^*\in \mathrm{ri}\, \hat{\partial} \phi(x^*)$. Then, for all sufficiently small $t > 0$, the proximal mapping $\mathrm{prox}_{t \phi}$ is $C^{p-1}$-smooth near $x^*+t v^*$.
\end{lemma}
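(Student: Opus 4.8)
The plan is to reduce the statement to the smoothness of the projection onto a $C^p$ manifold, by passing through the Moreau envelope and an epigraphical reformulation. First I would recall that, since $\phi$ is prox-bounded and prox-regular at $x^*$ for $v^*$, Theorem~\ref{prop:lip-prox-regular} already guarantees that for all sufficiently small $t>0$ the mapping $\mathrm{prox}_{t\phi}$ is single-valued, monotone and Lipschitz continuous on a neighborhood of $\bar y:=x^*+tv^*$; in particular $\mathrm{prox}_{t\phi}(\bar y)=x^*$. So the content of the lemma is the upgrade from Lipschitz to $C^{p-1}$.

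The key geometric observation is that the partial smoothness of $\phi$ at $x^*$ for $v^*$ relative to $\mathcal M$, together with $v^*\in\mathrm{ri}\,\hat\partial\phi(x^*)$, lets us identify $\mathrm{prox}_{t\phi}$ locally with the projection onto (a piece of) the manifold $\mathcal M$. Concretely, I would argue that near $\bar y$ the proximal point $x=\mathrm{prox}_{t\phi}(y)$ must lie on $\mathcal M$: the subgradient inclusion $(y-x)/t\in\partial\phi(x)$ forces, via the sharpness condition (iii) and the inner semicontinuity (iv) of $\partial\phi$ relative to $\mathcal M$ near $x^*$ (which makes the active manifold ``stable''), the point $x$ to stay on $\mathcal M$ for $y$ close enough to $\bar y$. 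This is the standard ``identification'' property of partly smooth functions; the relative interior condition $v^*\in\mathrm{ri}\,\hat\partial\phi(x^*)$ is exactly what is needed so that the perturbed normal vector $(y-x)/t$ remains in the relative interior of $\hat\partial\phi(x)$ and does not fall onto a lower-dimensional face. Once $x\in\mathcal M$, the first-order optimality condition for $\min_{x'}\{\phi(x')+\frac1{2t}\|x'-y\|^2\}$ restricted to $\mathcal M$ reads
\[
 \mathrm{grad}_{\mathcal M}\,\phi(x) + \tfrac1t\,P_{T_{\mathcal M}(x)}(x-y) = 0,
\]
where $\mathrm{grad}_{\mathcal M}\phi$ is the Riemannian gradient of the $C^p$-smooth function $\phi|_{\mathcal M}$; equivalently, writing $\mathcal M$ locally as the zero set of a $C^p$ submersion $G$, we get a $C^{p-1}$ system $H(x,y)=0$ in $(x,y)$ coupling the stationarity on $\mathcal M$ with the constraint $G(x)=0$.

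I would then finish by the implicit function theorem applied to this $C^{p-1}$ system at $(x^*,\bar y)$. The derivative of $H$ with respect to $x$ at $(x^*,\bar y)$ is, up to the factor $1/t$, essentially $\mathrm{id}+t\,\nabla^2_{\mathcal M}\phi(x^*)$ compressed to $T_{\mathcal M}(x^*)$ plus the normal-space block coming from $G$; for $t$ small this block is invertible because the Riemannian Hessian term is a bounded perturbation of the identity and the normal directions are pinned down by $DG(x^*)$, which has full rank by the submersion property. Hence the implicit function theorem yields a $C^{p-1}$ solution map $y\mapsto x(y)$ near $\bar y$, and by the identification argument this map coincides with $\mathrm{prox}_{t\phi}$ there, giving the claimed $C^{p-1}$ smoothness.

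The main obstacle I anticipate is making the identification step fully rigorous: one must show that for $y$ near $\bar y$ the (unique, Lipschitz) proximal point $\mathrm{prox}_{t\phi}(y)$ genuinely lies on $\mathcal M$, using only prox-regularity plus the four partial-smoothness axioms and the relative interior condition, rather than assuming it. This is where conditions (iii), (iv) and $v^*\in\mathrm{ri}\,\hat\partial\phi(x^*)$ all get used, essentially reproving a localized version of the partly smooth identification theorem in the perturbed setting; the rest (optimality conditions on $\mathcal M$ and the implicit function theorem) is routine once the right local model for $\mathcal M$ is fixed.
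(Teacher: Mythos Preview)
Your proposal is correct and follows essentially the same route as the paper: first show that $\mathrm{prox}_{t\phi}(w)\in\mathcal M$ for $w$ near $x^*+tv^*$ via the partly smooth identification theorem (the paper does this by a contradiction argument invoking \cite[Proposition~10.12]{lewis14optimality}, which is precisely the localized identification result you allude to), then write the constrained first-order conditions on $\mathcal M$ using a local $C^p$ extension $\widetilde\phi$ of $\phi|_{\mathcal M}$ and a submersion $H$ cutting out $\mathcal M$, and finally apply the inverse/implicit function theorem to the $C^{p-1}$ map $G(x,r)=(x+t\nabla\widetilde\phi(x)+\nabla H(x)^\top r,\,H(x))$, whose derivative at $(x^*,tr^*)$ is invertible for small $t$ exactly by the ``identity plus $t$ times bounded Hessian on the tangent space, full-rank on the normal space'' argument you sketch. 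Your opening sentence about an ``epigraphical reformulation'' is not actually used and can be dropped; the direct argument you outline afterward is the right one.
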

\begin{proof}
  By Definition \ref{def-psmooth} (ii) and Theorem \ref{prop:lip-prox-regular}, there exists $t_0 > 0$ such that for all $t \in (0, t_0)$, the mapping $\mathrm{prox}_{t \phi}$ is single-valued and Lipschitz continuous around $x^*+t v^*$. Fix any $t\in(0,t_0)$ and let $w^*:= x^*+t v^*$. We claim that $\mathrm{prox}_{t \phi} (w) \in \mathcal{M}$ for all $w$ around $w^*$. If not, there exists a sequence $w_k \rightarrow w^*$ such that $x_k := \mathrm{prox}_{t \phi} (w_k) \notin \mathcal{M}$ for all $k$. Note that $\mathrm{prox}_{t\phi} = (\mathrm{id} + t T)^{-1}$ for a $\phi$-attentive localization $T$ of $\partial \phi$. Hence $v_k := \frac{w_k - x_k}{t} \in \partial \phi (x_k)$. Moreover, we have $\lim_{k\to\infty} x_k = x^*$ by the continuity of $\mathrm{prox}_{t \phi}$. Therefore $\lim_{k \rightarrow \infty} v_k = \frac{w^* - x^*}{t} = v^*$. In addition, from the continuity of the Moreau envelope, $\lim_{k \rightarrow \infty} \phi(x_k) = \phi(x^*)$. By invoking  \cite[Proposition 10.12]{lewis14optimality}, we conclude that $x_k \in \mathcal{M}$ for all large enough $k$, which is a contradiction to $x_k\notin \mathcal{M}$ for all $k$.
    
  By Definition \ref{def-psmooth} (i) and (iii), there exists a $C^p$-smooth function $\widetilde{\phi}: U \rightarrow \mathbb{R}$, where $U$ is a neighborhood of $x^*$ in $\mathbb{R}^n$, such that $\widetilde{\phi}|_{\mathcal{M} \cap U} = \phi|_{\mathcal{M} \cap U}$ and 
  \begin{equation}\label{eq:phi-nm}
      \nabla \widetilde{\phi}(x^*) - v^* \in N_{\mathcal{M}} (x^*).
  \end{equation}
  Since $\mathcal{M}$ is a $C^p$-smooth manifold, there is an open set $U_1 \subseteq U$ containing $x^*$ and a $C^p$-smooth map $H\!:U_1 \rightarrow \mathbb{R}^m$ such that
  $\mathcal{M} \cap U_1 = \{x \in U_1\ |\ H(x) = 0\}$
  and $\nabla H(x)$ is surjective for all $x \in U_1$. We then have $N_{\mathcal{M}} (x) = \mathrm{Range}( \nabla H(x)^{\top})$ for all $x \in U_1$. By the inclusion \eqref{eq:phi-nm}, there exists $r^* \in \mathbb{R}^m$ such that $\nabla \widetilde{\phi}(x^*) - v^* + \nabla H(x^*)^{\top} r^*=0$. Since $\mathrm{prox}_{t \phi} (w) \in \mathcal{M}$ for all $w$ around $w^*$, there is a neighborhood $V$ of $w^*$ such that 
  \begin{equation}\label{def-proxphi}
   \mathrm{prox}_{t \phi} (w) = \mathrm{argmin}_{x \in \mathcal{M} \cap U} \left\{ \tilde{\phi}(x) + \frac{1}{2t} \|x - w\|^2 \right\} \quad\ {\rm for\ all}\ w \in V.
  \end{equation}
  Next we follow a similar scheme as in \cite[Lemma 2.1]{lewis08projection} to prove the desired result. Together with \eqref{def-proxphi}, there exists an open set $V_1 \subset V$ containing $w^*$ such that for each $w \in V_1$, $x = \mathrm{prox}_{t \phi} (w)$ if and only if
    \begin{equation}\label{eq:uwr}
        x \in U_1 \quad \text{and} \quad
        \begin{cases}
            w = t \nabla \tilde{\phi} (x) + x + \nabla H(x)^{\top} r \\
            0 = H(x)
        \end{cases}
        \text{for some } r\in \mathbb{R}^m.
    \end{equation}
  Define a $C^{p-1}$-smooth function $G: U_1 \times \mathbb{R}^m \rightarrow \mathbb{R}^n \times \mathbb{R}^m$ by
  \begin{equation*}
   G(x, r) := (t \nabla \widetilde{\phi} (x) + x + \nabla H(x)^{\top} r, H(x)).  
 \end{equation*}
 Recall that $\nabla \widetilde{\phi}(x^*) - v^* + \nabla H(x^*)^{\top} r^*=0$. It is immediate to have that 
 \begin{equation*}
  G(x^*, t r^*) = (t \nabla \widetilde{\phi} (x^*) + x^* + t \nabla H(x^*)^{\top} r^*, 0) = (w^*, 0).
 \end{equation*}
 Also, the linear operator $\nabla G(x^*, t r^*): \mathbb{R}^n \times \mathbb{R}^m \rightarrow \mathbb{R}^n \times \mathbb{R}^m$ takes the following form
 \begin{equation*}
  \nabla G(x^*, t r^*) (x, r) = (x + t A x + \nabla H(x^*)^{\top} r, \nabla H(x^*) x),
 \end{equation*}
 where $A\!:=\!\nabla^2 \widetilde{\phi} (x^*) +\!\sum_{i=1}^m r_i^* \nabla^2 H_i (x^*)$ and $H_i\!:\mathbb{R}^n\to\mathbb{R}$ is the $i$th component of $H$.
    
 Let $t_1 = \min\{t_0, - \frac{1}{\lambda_{\min} (A)}\}$ if $\lambda_{\min} (A) < 0$ and $t_1 = t_0$ otherwise. Fix $t \in (0, t_1)$. Then, $\nabla G(x^*, t r^*)$ is invertible. Indeed, for any $(x, q) \in \mathrm{Ker}( \nabla G(x^*, t r^*))$, we have $x \in T_{\mathcal{M}} (x^*)$ and $t x^{\top} A x + \|x\|^2 = 0$, which implies $x = 0$ and therefore $q= 0$ by the surjectivity of $\nabla H(x^*)$. By the inverse function theorem, there are open sets $S \subset U_1 \times \mathbb{R}^m$ containing $(x^*, t r^*)$ and $W \subset \mathbb{R}^n \times \mathbb{R}^m$ containing $(w^*, 0)$ such that the map $G\!: S \rightarrow W$ has a $C^{p-1}$ smooth inverse $G^{-1}\!: W \rightarrow S$.
  Let $V_2 = \{w \in V_1\ |\, (w, 0) \in W\}$. Then $V_2$ is a neighborhood of $w^*$. Fix any $w \in V_2$. Let $(x, r) = G^{-1} (w, 0)$. We have
  $ x \in U_1$, and $G(x, r) = (w, 0)$, which, by \eqref{eq:uwr}, means that $x = \mathrm{prox}_{t \phi}(w)$. Hence $\mathrm{prox}_{t \phi} = P \circ G^{-1} \circ P^*$ is $C^{p-1}$ smooth on $V_2$, where $P: \mathbb{R}^n \times \mathbb{R}^m \rightarrow \mathbb{R}^n$ is the canonical projection $(x, r) \mapsto x$ and $P^*: \mathbb{R}^n \mapsto \mathbb{R}^n \times \mathbb{R}^m$ is the embedding $x \mapsto (x, 0)$.
\end{proof}

\subsubsection{Closedness of the set of nondifferentiable points}
Next, we introduce another set of conditions that can also ensure the local smoothness of proximal mappings.
Let us start with the concept of twice epi-differentiability \cite[Definition 13.6]{rockafellar2009variational}. 
For a function $\phi: \R^n \to \bar{\R}$, a point $x \in \mathrm{dom}\, \phi$, and vectors $v, w \in \R^n$, we define the second-order quotient $\Delta_{t}^2\phi(x|v)[w]: = \frac{\phi(x+ tv) - \phi(x) -  t\iprod{v}{w}}{t^2 / 2}$ for all $t > 0$. 
\begin{definition}[twice epi-differentiability]
 A function $\phi$ is said to be twice epi-differentiable at $x$ for $v$ if $\Delta_{t}^2\phi(x|v)[w]$ epi-converges to ${\rm d}^2 \phi(x|v)[w]$ as $t\downarrow 0$, where ${\rm d}^2 \phi(x|v)[w]$ denotes the second subderivative of $\phi$ at $x$ for $v$, defined as $\liminf_{t\downarrow 0\atop w'\to w}\Delta_{t}^2\phi(x|v)[w']$.  
\end{definition}

Twice epi-differentiablity is a mild condition satisfied by all fully amenable functions \cite[Corollary 13.15]{rockafellar2009variational} and decomposable functions \cite{shapiro2003class,milzarek2016numerical}, which in particular include $\ell_1$ norm, group sparisty regularizer, nuclear norm, the indicator function of a polyhedral set, etc.  
To derive the differentiability of proximal mappings, another concept that we need is the generalized quadratic property of the second subderivative. 
\begin{definition}[Generalized quadratic second subderivative]
Let $\phi$ be twice epi-differentiable at $x$ for $v \in \partial \phi(x)$. We say that the second subderivative is generalized quadratic if 
\be \label{eq:gen-quad} {\rm d}^2 \phi(x|v)[w] = \iprod{w}{Mw} + \delta_{S}(w), \quad \forall\, w \in \R^n, \ee
where $S \subset \R^n$ is a linear subspace and $M \in \R^{n\times n}$.
\end{definition}

One can easily verify that any $C^2$ function satisfies the above definition. It has been shown in \cite[Theorem 4.5]{rockafellar1988first} and \cite[Lemma 5.3.27]{milzarek2016numerical} that $C^2$ fully decomposable functions \cite{shapiro2003class} have generalized quadratic second subderivative if $v \in {\rm ri}\, \hat \partial \phi(x)$, where the relative interior condition is needed to guarantee the existence of the subspace $S$. The $C^2$ fully decomposable functions include the $\ell_1$ norm, Ky Fan $k$-norm, and the indicator function of the positive semidefinite cone, see, e.g. \cite[Section 5]{milzarek2016numerical}. The generalized quadratic condition has been used to derive the differentiability of proximal mappings in \cite{poliquin1996generalized,rockafellar1988first,stella2017forward,themelis2018forward}.

When the set of nondifferentiable points of the proximal mapping is closed, we can deduce the local smoothness by restricting the second subderivative of the funtion to be generalized quadratic.
\begin{lemma} \label{lemma:continuous-diff}
Suppose that a prox-bounded function $\phi:\R^n \rightarrow \bar{\R}$ is both prox-regular and twice epi-differentiable at $x^*$ for $v^* \in \partial \phi(x^*)$ with its second subderivative being generalized quadratic. Let $\rho$ and $\epsilon$ be the constants in Definition \ref{def:prox-regular} of prox-regularity at $x^*$ for $v^*$. Assume that $0 < t  < 1 /\rho$ and that the set of nondifferentiable points of $\mathrm{prox}_{t \phi}$, denoted by $\mathcal{N}$, is closed. If the mapping $\mathrm{prox}_{t \phi}$ is $C^{p-1}$ on $\mathcal{N}^c$, then it is $C^{p-1}$ around $x^* + tv^*$.
\end{lemma}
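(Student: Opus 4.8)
The plan is to separate a soft topological reduction from the genuine content of the statement --- namely pointwise differentiability of $\mathrm{prox}_{t\phi}$ at $w^{\ast}:=x^{\ast}+tv^{\ast}$. For the reduction: since $\mathcal{N}$ is closed, $\mathcal{N}^{c}$ is open, and by hypothesis $\mathrm{prox}_{t\phi}$ is $C^{p-1}$ on all of $\mathcal{N}^{c}$. Hence it suffices to show $w^{\ast}\notin\mathcal{N}$, i.e.\ that $\mathrm{prox}_{t\phi}$ is Fr\'echet differentiable at $w^{\ast}$: then $B(w^{\ast},r)\subseteq\mathcal{N}^{c}$ for some $r>0$, and $\mathrm{prox}_{t\phi}$ is $C^{p-1}$ on $B(w^{\ast},r)$, which is exactly the desired conclusion.

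For the differentiability at $w^{\ast}$: by prox-boundedness, prox-regularity, and $0<t<1/\rho$ (the regime of Theorem \ref{prop:lip-prox-regular} and of the localization theory underlying \cite{poliquin1996generalized}), $\mathrm{prox}_{t\phi}$ is single-valued, monotone and Lipschitz near $w^{\ast}$, with $\mathrm{prox}_{t\phi}(w^{\ast})=x^{\ast}$. I would then invoke the duality between twice epi-differentiability of $\phi$ and proto-differentiability of $\mathrm{prox}_{t\phi}$ (see \cite{poliquin1996generalized,rockafellar1988first} and \cite{rockafellar2009variational}): since $\phi$ is twice epi-differentiable at $x^{\ast}$ for $v^{\ast}$ and $t<1/\rho$, the mapping $\mathrm{prox}_{t\phi}$ is proto-differentiable at $w^{\ast}$, with proto-derivative
\[
 u\ \longmapsto\ \argmin_{w\in\R^{n}}\Big\{\tfrac12\,{\rm d}^{2}\phi(x^{\ast}|v^{\ast})[w]+\tfrac{1}{2t}\|w-u\|^{2}\Big\}.
\]
Substituting the generalized quadratic form \eqref{eq:gen-quad}, this is the solution operator of a quadratic program over the subspace $S$ with quadratic matrix $M$. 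Prox-regularity gives the lower bound ${\rm d}^{2}\phi(x^{\ast}|v^{\ast})[w]\ge-\rho\|w\|^{2}$ (apply \eqref{eq:prox-regular} with $y=x^{\ast}+tw$ and pass to the limit), so the symmetric part of $M$ is $\succeq-\rho I$ on $S$; together with $t<1/\rho$, the quadratic objective is strongly convex on $S$. Hence the proto-derivative is a single-valued \emph{linear} operator $L$ on $\R^{n}$. For a locally Lipschitz single-valued map, proto-differentiability coincides with semi-differentiability (B-differentiability), and a B-differentiable map whose B-derivative is linear is classically differentiable at the point; therefore $\mathrm{prox}_{t\phi}$ is differentiable at $w^{\ast}$, completing the argument. (Equivalently, one may run the argument through the Moreau envelope $e_{t\phi}$, which is $C^{1}$ with $\nabla e_{t\phi}=(\mathrm{id}-\mathrm{prox}_{t\phi})/t$; the generalized quadratic property makes ${\rm d}^{2}e_{t\phi}(w^{\ast}|\,\cdot\,)$ a genuine finite quadratic form, forcing $\nabla e_{t\phi}$, hence $\mathrm{prox}_{t\phi}$, to be differentiable at $w^{\ast}$.)

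The reduction is routine; the crux is this pointwise differentiability. The two delicate ingredients are: (i) correctly invoking the twice-epi-differentiability / proto-differentiability duality, which is precisely where the hypotheses $0<t<1/\rho$ and prox-regularity are used --- the former both to guarantee the single-valued Lipschitz localization and to make the associated quadratic subproblem strongly convex; and (ii) passing from ``the proto-derivative is linear'' to ``$\mathrm{prox}_{t\phi}$ is Fr\'echet differentiable at $w^{\ast}$'', which relies on the equivalence of proto-, semi- and B-differentiability for Lipschitz single-valued maps together with the elementary fact that a linear B-derivative is an ordinary derivative. The existence of the subspace $S$ in \eqref{eq:gen-quad} is built into the generalized quadratic hypothesis, so it needs no separate verification here.
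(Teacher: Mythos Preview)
Your proposal is correct and follows essentially the same approach as the paper: reduce to showing $\mathrm{prox}_{t\phi}$ is differentiable at $w^{\ast}=x^{\ast}+tv^{\ast}$ (so $w^{\ast}\in\mathcal{N}^{c}$ and the conclusion follows from closedness of $\mathcal{N}$), then obtain that differentiability from the Poliquin--Rockafellar theory linking twice epi-differentiability with generalized quadratic second subderivative to differentiability of the proximal map. The paper simply cites \cite[Theorem~3.9]{poliquin1996generalized} for the latter step (after a tilt-and-shift reduction making $x^{\ast}$ the minimizer of an auxiliary $\bar{\phi}$), whereas you unpack the content of that theorem via proto-differentiability and linearity of the proto-derivative.
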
 
\begin{proof}
Set $\bar{\phi}(x):= \phi(x) - \iprod{v^*}{x - x^*} + (R/2)\|x - x^*\|^2$ with $R > \rho$ being sufficiently large. Then, we can deduce from \cite[Proposition 13.37]{rockafellar2009variational} that $x^* = \argmin\; \bar{\phi}$. By \cite[Theorem 3.9]{poliquin1996generalized}, $\mathrm{prox}_{t \phi}$ is differentiable at $ x^* + tv^*$. From the $C^{p-1}$ smoothness of $\mathrm{prox}_{t \phi}$ over $\mathcal{N}^c$, we conclude that $\mathrm{prox}_{t \phi}$ is $C^{p-1}$ around $x^* + tv^*$.
\end{proof}

\subsubsection{Examples of nonsmooth functions with locally smooth proximal mappings} \label{subsec:exam}
Let us show some examples satisfying the partial smoothness condition. Meanwhile, we also investigate the nondifferentiable points of the proximal mappings.

\vskip 2mm
\begin{eg}[functions of vectors]
When $h$ is a certain vector norm or the indicator function of some simple set, it is partly smooth, and the set of nondifferentiable points of its proximal mapping is closed.  
Consider the following functions on space $\mathbb{R}^n$ around any point $x^* \in \mathbb{R}^n$.
\begin{itemize}
    \item 
    $h(x) = \|x\|_1$.
    We have for all $x \in \mathbb{R}^n$,
    \begin{equation*}
        \partial h(x) = \{v \in \mathbb{R}^n : v_i \in [-1, 1], v_j = \mathrm{sign}(x_j), \forall\, i \in {\rm supp}(x)^c, j \in {\rm supp}(x)\},
    \end{equation*}
    where ${\rm supp}(x)^c =\{i \in [n]: x_i =0 \}$. Let $\mathcal{M}_{x^*} = \{x : x_i = 0, \forall\, i \in {\rm supp}(x^*)^c\}$. For all $x \in \mathcal{M}_{x^*}$ sufficiently close to $x^*$, we have ${\rm supp}(x) = {\rm supp}(x^*)$. It is therefore easy to verify that $h$ is partly smooth at $x^*$ relative to $\mathcal{M}_{x^*}$. From the expression of the proximal mapping of $\|x\|_1$, we deduce that the set of nondifferentiable points of $\mathrm{prox}_h$ is $\{ x: \exists\, i ~ \text{s.t.} \; |x_i| = 1 \}$, which is closed.
    
    \item
    $h(x) = \|x\|_p$ with $p \geq 2$. When $x^* \neq 0$, $h$ is $C^2$ near $x^*$, hence partly smooth there relative to $\mathcal{M}_{x^*} = \mathbb{R}^n$. When $x^* = 0$, $\mathrm{par}\, \partial h (x^*) = \mathbb{R}^n$, so $h$ is partly smooth there relative to $\mathcal{M}_{x^*} = \{x^*\}$. Its proximal operator is 
    \[ \mathrm{prox}_h(x) = \begin{cases}
        & \left(1 - \frac{1}{\|x\|_q} \right)x, \;\; \|x\|_q \geq 1, \\
        & 0, \;\; \mathrm{otherwise,}
    \end{cases} \]
    where $\|x\|_q$ is the dual norm with $\frac{1}{p} + \frac{1}{q} = 1$.
    The set of nondifferentiable points is $\{x:\|x\|_q = 1\}$, which is closed.
    
    \item
    $h(x) = \delta_{\{x : x \geq 0\}}(x)$. 
    Since the set $\Mcal_{x^*} := \{x \in \R^n: x_i = 0, \; \forall i \in {\rm supp}(x^*)^c\}$ is a smooth manifold, $h$ is partly smooth at $x^*$ relative to $\Mcal_{x^*}$ \cite[Example 3.2]{lewis2002active}. Its proximal mapping is $\mathrm{prox}_{h}(x) = \max(x,0)$. The set of nondifferentiable points of $\mathrm{prox}_{h}$ is $\{ x: \exists\, i  ~\text{s.t.} \; x_i = 0\}$, which is closed. 
    
    \item
    $h(x) = \delta_{\{x: Ax = b\}}(x)$. It is obvious that $h$ is partly smooth relative to $\mathcal{M}_{x^*} = \{x: Ax = b\}$ near $x^*$ with $Ax^* = b$. Let $A^\dag$ be the Moore-Penrose pseudoinverse of $A$. Then, 
    $ \mathrm{prox}_h(x) = x - A^\dag (Ax - b)$, 
    which is everywhere differentiable.
    \end{itemize}
\end{eg}

\vskip 2mm
\begin{eg}[functions of matrices]
    When $h$ is a certain matrix norm or the indicator function of some simple set, it is partly smooth, and the set of nondifferentiable points of its proximal mapping is closed. 
    Consider the following functions on space $\mathbb{R}^{m \times n}$ around any matrix $X^* \in \mathbb{R}^{m \times n}$.

    \begin{itemize}
        \item 
        $h(X) = \|X\|_{2, 1}$.  
        Define $\revise{\Omega}(X) = \{i : X_i = 0\}, \forall\, X = (X_1, \dots, X_n) \in \mathbb{R}^{m \times n}$. Using the separability of partial smoothness \cite{lewis2002active}, we know that $h$ is partly smooth at $X^*$ relative to $\mathcal{M}_{X^*} = \{X \in \mathbb{R}^{m \times n} : \revise{\Omega}(X) = \revise{\Omega}(X^*) \}$. 
        Its proximal mapping is 
        $\mathrm{prox}_h(X_i) = \left(1 - \frac{1}{\max\{
        \|X_i\|_2, 1\}} \right)X_i$, 
        where $X_i$ is the $i$-th column of $X$. The set of nondifferentiable points is $\{X: \exists\, i ~ \text{s.t.} \; \|X_i\| = 1 \}$, which is closed.
    
        \item 
        $h(X) = \|X\|_*$, where $\|\cdot\|_*$ denotes the nuclear norm. It is shown in \cite[Example 4.4]{liang2014local} that $h$ is partly smooth at $X^*$ relative to $\Mcal_{X^*}:=\{ X \in \R^{m\times n}: \mathrm{rank}(X) = \mathrm{rank}(X^*) \}$. By the expression of the B-Jacobian of $\mathrm{prox}_{h}$ given in \cite[15.6.3e]{patrinos2014forward}, the set of nondifferentiable points are $\{X: X \mathrm{~has~singular~value~} 1\}$, which is closed by the continuity of the singular value function. 
        
    \end{itemize}
\end{eg}

\vskip 2mm
\begin{eg}
Consider the function $h(X) = \delta_{\{X: X \succeq 0\}}(X)$ defined on the Euclidean space $\mathbf{S}^n$ consisting of all $n$-by-$n$ real symmetric matrices, and any matrix $X^* \in \mathbf{S}^n$. It follows from \cite[Example 4.14 and Theorem 4.2]{lewis2002active} that $h$ is partly smooth at $X^*$ relative to $\Mcal_{X^*} :=\{ X \in \mathbf{S}^n :  X \succeq 0,\; \mathrm{rank}(X) = \mathrm{rank}(X^*)\}$. It follows from \cite[15.6.2h]{patrinos2014forward} that the set of nondifferentiable points is $\{X\succeq 0: X \mathrm{~has~eigenvalue~} 0\}$, which is closed.
\end{eg}

\subsection{Strict complementarity} \label{subsec:SC}
Recall that in Lemma \ref{lemma:prox-smooth} we required a relative interior condition. In our composite optimization setting, this condition is more widely known as strict complementarity, to which we now give a brief introduction.

Denote by $X^*$ the solution set of \eqref{prob:nonsmootheq} and pick any $x^* \in X^*$. 
We first make the following technical assumptions on $f$ and $h$:
\begin{assum} \label{assum:basic-setting}
\begin{enumerate}
    \item Both $f$ and $h$ are prox-regular at $x^*$.
    \item The only $(v_1, v_2) \in \partial^{\infty} f(x^*) \times \partial^{\infty} h(x^*)$ with $v_1 + v_2 = 0$ is $(v_1, v_2) = (0, 0)$.
\end{enumerate}
\end{assum}
Under the above conditions, we have $\partial f(x^*) = \hat \partial f(x^*)$, $\partial h(x^*) = \hat \partial h(x^*)$, and
\begin{align}
    \partial (f+h)(x^*) &= \partial f(x^*) + \partial h(x^*), \nonumber \\ \mathrm{ri} \, \partial (f+h)(x^*) &= \mathrm{ri}\, \partial f(x^*) + \mathrm{ri} \, \partial h(x^*). \label{eq:ri-sum}
\end{align}

\begin{definition}[Strict complementarity] \label{def:sc}
We say that the strict complementarity condition (SC) holds at $x^*$ if 
    $0 \in \mathrm{ri}\, \partial (f+h)(x^*)$.
\end{definition}

\begin{remark}
By \eqref{eq:ri-sum}, SC is equivalent to the existence of a vector $z^* \in \mathbb{R}^n$ such that
\be \label{eq:sc} \frac{x^* - z^*}{t} \in \mathrm{ri}\, \partial f(x^*), \quad \frac{z^* - x^*}{t} \in \mathrm{ri}\, \partial h(x^*), \ee
where $t > 0$. 
\end{remark}

\begin{remark}
When both $f$ and $h$ are convex functions, a sufficient condition for Assumption \ref{assum:basic-setting} is $0 \in \mathrm{ri}\left(\mathrm{dom}\; f - \mathrm{dom}\; h\right)$.
\end{remark}

Let us take the $\ell_1$ regularized composite optimization problem and the basis pursuit problem as examples to explain SC defined in Definition \ref{def:sc}.
\begin{eg}[$\ell_1$ regularized composite optimization]
For $\psi(x) = f(x) + \lambda \|x\|_1$ where $f$ is a $C^2$ smooth function, by the expression of the subgradient of $\ell_1$ norm, SC holds at $x$ if 
$ \{i\in [n]: x_i = 0, [|\nabla f(x)|]_i = \lambda \} = \emptyset$, 
namely for each $i$ with $x_i = 0$, $[|\nabla f(x)|]_i \ne \lambda.$ 
\end{eg}
\begin{eg}[The basis pursuit problem] \label{eq:bp-sc}
When $f = \delta_{\{x: Ax = b\}}$ and $h = \|\cdot\|_1$ for some $A \in \R^{m\times n}$ and $b \in \R^m$, problem \eqref{prob} reduces to the basis pursuit problem:
\begin{equation} \label{bp-primal1}\min_{x \in \R^n} \; \|x\|_1, \;\; \st \;\; Ax = b. \end{equation}
Its dual is
\begin{equation} \label{bp-dual1} \min_{y\in \R^m} \; -b^\top y, \;\; \st \;\; \|A^\top y\|_{\infty} \leq 1.  \end{equation}
Let $(x^*, y^*)$ be a solution pair. Fix any $t > 0$ and let $z^* = x^* - t A^{\top} y^*$. Then $(x^*, z^*)$ satisfies \eqref{eq:sc}. Since
   $ \mathrm{ri}\left(\partial f(x^*)\right) = \partial f(x^*) = \mathrm{Range} (A^{\top})$,
the first half of \eqref{eq:sc} is always satisfied. Define $\revise{\Omega}(x^*) = \{ i : x_i^* = 0 \}$ and let $\revise{\Omega}(x^*)^c$ be its complement. The SC condition boils down to
\begin{equation*}
    - A^{\top} y^* = \frac{z^* - x^*}{t} \in \{v : v_i \in (-1, 1), v_j = \sign(x_j), \forall\, i \in \revise{\Omega}(x^*), j \in \revise{\Omega}(x^*)^c\},
\end{equation*}
namely for each $i$, either $1 - |A^\top y^*|_i$ or $x_i^*$ is zero but not both.
\end{eg}

\subsection{Local smoothness of the residual mappings}

Based on the results given in Lemmas \ref{lemma:prox-smooth} and \ref{lemma:continuous-diff}, we provide sufficient conditions for the local smoothness of the residual mappings $F_{{\rm PGM}}$, $F_{{\rm DRS}}$, and $F_{\rm ALM}$. The idea is to combine the smoothness results of the proximal mapping in Section \ref{subsec:smooth-prox} and the strict complementarity in Section \ref{subsec:SC}.

\subsubsection{The natural residual $F_{\rm PGM}$}

When considering the natural residual, we assume the function $f$ is smooth. Let us start with the following condition. 
\begin{cond} \label{cond:cond-pgm}
For $F_{{\rm PGM}}$, we give the following conditions.
\begin{itemize}
    \item[{\rm (B1)}] Assume that SC holds at $x^*$. The function $f$ is smooth. In addition, there exists a smooth manifold $\Mcal^h_{x^*}$ such that $h$ is $C^p$-partly smooth at $x^* \in \Mcal^h_{x^*}$ for $- \nabla f(x^*)$.  
    \item[{\rm (B2)}] The function $f$ is smooth and $h$ is twice epi-differentiable at $x^*$ for $-\nabla f(x^*)$ with second subderivative being generalized quadratic.
    The set of nondifferentiable points of  $\mathrm{prox}_{th}$ is closed. In addition, $\mathrm{prox}_{th}$ is $C^{p-1}$ in the complement of the set of their nondifferentiable points.
\end{itemize}
\end{cond}

\begin{remark}
As noted in Section \ref{lemma:continuous-diff}, any $C^2$-fully decomposable function has generalized quadratic second subderivative if the SC condition holds. It has also been shown that a $C^2$-fully decomposable function is partly smooth \cite{shapiro2003class}. However, there is no direct implication between the partial smoothness and the second subderivative being generalized quadratic. 
\end{remark}

Based on Condition \ref{cond:cond-pgm} and Lemmas \ref{lemma:prox-smooth} and \ref{lemma:continuous-diff}, we have the following corollary on the local smoothness of the natural residual \eqref{eq:pgm}.
\begin{theorem} \label{coro-pgm} 
If {\rm (B1)} or {\rm (B2)} holds, $F_{{\rm PGM}}$ is locally $C^{p-1}$ around $x^*$.
\end{theorem}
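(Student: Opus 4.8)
The plan is to treat the two cases (B1) and (B2) separately, in each case reducing the smoothness of $F_{\rm PGM}(x) = x - \mathrm{prox}_{th}(x - t\nabla f(x))$ to the smoothness of $\mathrm{prox}_{th}$ near the shifted point $x^* - t\nabla f(x^*)$, and then invoking the appropriate lemma from Section~\ref{subsec:smooth-prox}. The first observation, common to both cases, is that since $x^*$ is a root of $F_{\rm PGM}$ we have $\mathrm{prox}_{th}(x^* - t\nabla f(x^*)) = x^*$, and by the characterization of the proximal mapping this forces $-\nabla f(x^*) \in \partial h(x^*)$ (indeed $\tfrac{1}{t}\big((x^* - t\nabla f(x^*)) - x^*\big) = -\nabla f(x^*) \in \partial h(x^*)$). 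So the vector $v^* := -\nabla f(x^*)$ is a valid subgradient of $h$ at $x^*$, and it is exactly the point for which partial smoothness (in (B1)) or twice epi-differentiability (in (B2)) is assumed.

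In case (B1), I would first translate the strict complementarity condition (Definition~\ref{def:sc}) into the relative interior condition required by Lemma~\ref{lemma:prox-smooth}. By the remark following Definition~\ref{def:sc}, SC at $x^*$ gives a vector $z^*$ with $\tfrac{x^*-z^*}{t} \in \mathrm{ri}\,\partial f(x^*)$ and $\tfrac{z^*-x^*}{t} \in \mathrm{ri}\,\partial h(x^*)$. Since $f$ is smooth, $\partial f(x^*) = \{\nabla f(x^*)\}$ is a singleton, so $\tfrac{x^*-z^*}{t} = \nabla f(x^*)$, i.e.\ $z^* = x^* - t\nabla f(x^*)$, and then $\tfrac{z^*-x^*}{t} = -\nabla f(x^*) = v^* \in \mathrm{ri}\,\partial h(x^*) = \mathrm{ri}\,\hat\partial h(x^*)$ (the last equality from Assumption~\ref{assum:basic-setting}, which is implicit in SC being well-posed). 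This is precisely the hypothesis $v^* \in \mathrm{ri}\,\hat\partial h(x^*)$ of Lemma~\ref{lemma:prox-smooth}. That lemma, together with prox-boundedness of $h$ (which follows from prox-regularity, or can be taken as a standing assumption), then yields that $\mathrm{prox}_{th}$ is $C^{p-1}$ in a neighborhood of $x^* + t v^* = x^* - t\nabla f(x^*)$ for all sufficiently small $t>0$. Finally, since $x \mapsto x - t\nabla f(x)$ is $C^{p-1}$ (indeed $C^\infty$ in the $x$-variable up to the smoothness of $f$; here $f$ being $C^1$ suffices to make the argument map continuous, and one should note $F_{\rm PGM}$ inherits the smoothness order of $\min\{p-1, \text{order of }\nabla f\}$), composition gives that $F_{\rm PGM}(x) = x - \mathrm{prox}_{th}(x - t\nabla f(x))$ is $C^{p-1}$ near $x^*$.

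In case (B2), the argument is parallel but uses Lemma~\ref{lemma:continuous-diff} in place of Lemma~\ref{lemma:prox-smooth}. Here $h$ is prox-regular and twice epi-differentiable at $x^*$ for $v^* = -\nabla f(x^*) \in \partial h(x^*)$ with generalized quadratic second subderivative; the set $\mathcal{N}$ of nondifferentiable points of $\mathrm{prox}_{th}$ is closed, and $\mathrm{prox}_{th}$ is $C^{p-1}$ on $\mathcal{N}^c$. Choosing $t$ small enough that $0 < t < 1/\rho$ (with $\rho$ the prox-regularity modulus), Lemma~\ref{lemma:continuous-diff} gives directly that $\mathrm{prox}_{th}$ is $C^{p-1}$ in a neighborhood of $x^* + t v^* = x^* - t\nabla f(x^*)$, and the same composition with the $C^{p-1}$ map $x \mapsto x - t\nabla f(x)$ finishes the proof. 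I would remark that both conclusions hold only for $t$ below an explicit threshold, which is consistent with the standing requirement throughout the paper that $t$ be chosen small enough to make $\mathrm{prox}_{th}$ single-valued and Lipschitz.

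The only mild subtlety — and the place I would be most careful — is the verification in case (B1) that SC, as stated via $0 \in \mathrm{ri}\,\partial(f+h)(x^*)$, really does deliver the \emph{relative-interior-in-$\hat\partial h$} condition and not merely membership in $\partial h(x^*)$; this hinges on the sum rule $\mathrm{ri}\,\partial(f+h)(x^*) = \mathrm{ri}\,\partial f(x^*) + \mathrm{ri}\,\partial h(x^*)$ from \eqref{eq:ri-sum}, which is valid under Assumption~\ref{assum:basic-setting}, and on $f$ smooth forcing the $f$-component of the splitting to be uniquely $\nabla f(x^*)$. Everything else is a routine composition-of-smooth-maps argument, so no genuine obstacle remains once the two lemmas are in hand.
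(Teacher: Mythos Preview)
Your proposal is correct and follows essentially the same approach as the paper: in each case you reduce to the $C^{p-1}$ smoothness of $\mathrm{prox}_{th}$ near $x^* - t\nabla f(x^*)$ via Lemma~\ref{lemma:prox-smooth} (for (B1)) or Lemma~\ref{lemma:continuous-diff} (for (B2)), then compose with the smooth map $x \mapsto x - t\nabla f(x)$. The paper's proof is a two-sentence invocation of exactly these lemmas; your version simply fills in the verification that SC plus smoothness of $f$ yield $-\nabla f(x^*) \in \mathrm{ri}\,\hat\partial h(x^*)$, which the paper leaves implicit.
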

\begin{proof}
If (B1) or (B2) holds, it is clear from Lemmas \ref{lemma:prox-smooth} or \ref{lemma:continuous-diff} that $\mathrm{prox}_{th}$ is locally $C^{p-1}$ around $x^* - t\nabla f(x^*)$. Hence, the natural residual is local $C^{p-1}$ around $x^*$.
\end{proof}

\subsubsection{The DRS residual $F_{\rm DRS}$}

Unlike $F_{\rm PGM}$, $F_{\rm DRS}$ can also  deal with nonsmooth $f$.
\begin{cond} \label{cond:cond-drs}
For $F_{{\rm DRS}}$, we give the following conditions.
\begin{itemize}
\item[{\rm (B1$^\prime$)}] Assume that SC holds at $x^*$. Let $z^*$ be the corresponding vector such that \eqref{eq:sc} holds. There exist two smooth manifolds,  $\Mcal^f_{x^*}$ and $\Mcal^h_{x^*}$ such that
    the function $f$ is $C^p$-partly smooth at $x^* \in \Mcal^f_{x^*}$ for $\frac{x^* - z^*}{t}$ and $h$ is $C^p$-partly smooth at $x^* \in \Mcal^h_{x^*}$ for $\frac{z^* - x^*}{t}$.  
    \item[{\rm (B2$^\prime$)}] The function $f$ is twice epi-differentiable at $x^*$ for $\frac{x^* - z^*}{t}$ with second subderivative being generalized quadratic and $h$ is twice epi-differentiable at $x^*$ for $\frac{z^* - x^*}{t}$ with second subderivative being generalized quadratic. The set of nondifferentiable points of $\mathrm{prox}_{tf}$ and $\mathrm{prox}_{th}$ are both closed. In addition, both $\mathrm{prox}_{tf}$ and $\mathrm{prox}_{th}$ are $C^{p-1}$ in the complement of the set of their nondifferentiable points.
\end{itemize}
\end{cond}

Based on Condition \ref{cond:cond-drs} and Lemmas \ref{lemma:prox-smooth} and \ref{lemma:continuous-diff}, we have the following corollaries on the local smoothness of the DRS residual \eqref{eq:drs}.
\begin{theorem} \label{coro-drs}
If {\rm (B1$^\prime$)} or {\rm (B2$^\prime$)} holds, $F_{{\rm DRS}}$ is locally $C^{p-1}$ around $z^*$.
\end{theorem}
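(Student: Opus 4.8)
The plan is to reduce the $C^{p-1}$ smoothness of $F_{\mathrm{DRS}}$ near $z^*$ to the $C^{p-1}$ smoothness of the two proximal mappings $\mathrm{prox}_{th}$ and $\mathrm{prox}_{tf}$ at the appropriate base points, and then conclude by a chain-rule argument. Recall from \eqref{eq:drs} that $F_{\mathrm{DRS}}(z) = \mathrm{prox}_{th}(z) - \mathrm{prox}_{tf}\big(2\mathrm{prox}_{th}(z) - z\big)$. Set $x^* := \mathrm{prox}_{th}(z^*)$. Since $z^*$ is a root of $F_{\mathrm{DRS}}$, the definition of the proximal mapping gives $\frac{z^*-x^*}{t} \in \partial h(x^*)$, and because $F_{\mathrm{DRS}}(z^*)=0$ forces $x^* = \mathrm{prox}_{tf}(2x^*-z^*)$, it also gives $\frac{x^*-z^*}{t} \in \partial f(x^*)$. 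Under SC these memberships strengthen, via \eqref{eq:sc} and Assumption \ref{assum:basic-setting} (which legitimizes $\partial = \hat{\partial}$), to $\frac{x^*-z^*}{t} \in \mathrm{ri}\,\hat{\partial} f(x^*)$ and $\frac{z^*-x^*}{t} \in \mathrm{ri}\,\hat{\partial} h(x^*)$, which is exactly the relative-interior hypothesis required by Lemma \ref{lemma:prox-smooth} (and the subgradient hypothesis required by Lemma \ref{lemma:continuous-diff}).

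Under \rm{(B1$'$)} I would then apply Lemma \ref{lemma:prox-smooth} twice: once to $h$ at $x^*$ for $\frac{z^*-x^*}{t}$ relative to $\Mcal^h_{x^*}$, yielding that $\mathrm{prox}_{th}$ is $C^{p-1}$ near $x^* + t\cdot\frac{z^*-x^*}{t} = z^*$; and once to $f$ at $x^*$ for $\frac{x^*-z^*}{t}$ relative to $\Mcal^f_{x^*}$, yielding that $\mathrm{prox}_{tf}$ is $C^{p-1}$ near $x^* + t\cdot\frac{x^*-z^*}{t} = 2x^*-z^*$. Under \rm{(B2$'$)} I would instead invoke Lemma \ref{lemma:continuous-diff} with the same two choices of base point and subgradient (shrinking $t$ if necessary so that $t < 1/\rho$ holds simultaneously for the prox-regularity constants of $f$ and $h$), obtaining the same two conclusions. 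In both cases the essential observation is that the neighborhood on which $\mathrm{prox}_{tf}$ is smooth is centered precisely at $2x^*-z^* = 2\mathrm{prox}_{th}(z^*) - z^*$, i.e.\ the value at $z^*$ of the inner map appearing in \eqref{eq:drs}.

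Finally I would assemble the pieces: the map $z \mapsto 2\mathrm{prox}_{th}(z) - z$ is $C^{p-1}$ on a neighborhood of $z^*$ and sends $z^*$ to $2x^*-z^*$; composing it with $\mathrm{prox}_{tf}$ (which is $C^{p-1}$ on a neighborhood of $2x^*-z^*$) and subtracting the result from the $C^{p-1}$ map $\mathrm{prox}_{th}$ shows that $F_{\mathrm{DRS}}$ coincides with a $C^{p-1}$ mapping on a possibly smaller neighborhood of $z^*$.

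The main obstacle is bookkeeping rather than anything conceptually deep: one must check carefully that SC, through \eqref{eq:sc}, produces exactly the relative-interior condition the two lemmas demand, and one must track which base point ($z^*$ versus $2x^*-z^*$) each smoothness neighborhood is centered at, so that for $z$ close to $z^*$ the composition in \eqref{eq:drs} genuinely stays inside both regions of smoothness. The \rm{(B2$'$)} case carries the extra minor point of choosing $t$ small enough to meet the $t<1/\rho$ requirement of Lemma \ref{lemma:continuous-diff} for both functions at once.
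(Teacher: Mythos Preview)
Your proposal is correct and follows essentially the same route as the paper's proof: invoke Lemma~\ref{lemma:prox-smooth} (under (B1$'$)) or Lemma~\ref{lemma:continuous-diff} (under (B2$'$)) to obtain $C^{p-1}$ smoothness of $\mathrm{prox}_{th}$ near $z^*$ and of $\mathrm{prox}_{tf}$ near $2x^*-z^*$, then conclude by composition. You supply considerably more detail than the paper's three-line argument, in particular the verification that the base points $z^*$ and $2x^*-z^*$ are exactly those produced by the lemmas and that the inner map lands in the right neighborhood; just be careful that in (B1$'$) the vector $z^*$ is \emph{defined} as the one furnished by SC in \eqref{eq:sc}, so the relative-interior memberships are part of the hypothesis rather than something to be derived from $z^*$ merely being a root.
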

\begin{proof}
Suppose that (B1$^\prime$) or (B2$^\prime$) holds. Applying the results of Lemmas \ref{lemma:prox-smooth} or \ref{lemma:continuous-diff} implies that the proximal mappings $\mathrm{prox}_{tf}$ and $\mathrm{prox}_{th}$ are $C^{p-1}$ smooth around $2x^* - z^*$ and $z^*$, respectively. Therefore, the DRS residual is locally $C^{p-1}$ around $z^*$. 
\end{proof}

\subsubsection{The gradient mapping $F_{\mathrm{ALM}}$}

We require that $f$ be convex and smooth and $h$ be convex.
Based on Lemmas \ref{lemma:prox-smooth} and \ref{lemma:continuous-diff}, we have the following result on the local smoothness of the gradient mapping \eqref{eq:gradient}.

\begin{theorem} \label{coro-alm}
Consider the gradient mapping \eqref{eq:gradient} with convex and smooth $f$ and convex $h$.
If {\rm (B1)} or {\rm (B2)} holds, then for any fixed $t > 0$ sufficiently small and $z$ near $\nabla f(x^*)$, the mapping $F_{\mathrm{ALM}} (\cdot ; z)$ is locally $C^{p-1}$ around $x^*$.
\end{theorem}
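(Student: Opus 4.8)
The plan is to reduce the claim to Theorem~\ref{coro-pgm}. The mapping $F_{\mathrm{ALM}}(\cdot;z)$ has the same structure as $F_{\mathrm{PGM}}$, the only differences being that $\mathrm{prox}_{th}$ is now composed with the affine map $x\mapsto x-tz$ instead of with $x\mapsto x-t\nabla f(x)$, and that $\nabla f$ appears as an explicit additive term rather than inside the proximal mapping. Both differences are harmless once we know that $\mathrm{prox}_{th}$ is $C^{p-1}$ on a fixed ball around $x^*-t\nabla f(x^*)$ and that $z$ is close enough to $\nabla f(x^*)$ for the argument $x-tz$ to remain in that ball.

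First I would establish the smoothness of $\mathrm{prox}_{th}$ near $x^*-t\nabla f(x^*)$, exactly as in the proof of Theorem~\ref{coro-pgm}. Since $h$ is convex and $t>0$, $\mathrm{prox}_{th}$ is single-valued and nonexpansive, so there is no issue of well-definedness. As $x^*$ is a stationary point of \eqref{prob}, it is a root of $F_{\mathrm{PGM}}$, hence $-\nabla f(x^*)\in\partial h(x^*)$; moreover, SC combined with \eqref{eq:sc} and the smoothness of $f$ (which forces $z^*=x^*-t\nabla f(x^*)$) gives the relative interior condition $-\nabla f(x^*)\in\mathrm{ri}\,\hat\partial h(x^*)$. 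Therefore, under {\rm (B1)} Lemma~\ref{lemma:prox-smooth} applies with $v^*=-\nabla f(x^*)$, and under {\rm (B2)} Lemma~\ref{lemma:continuous-diff} applies with the same $v^*$ once $t$ is taken with $t<1/\rho$; in either case there is $\delta>0$ such that $\mathrm{prox}_{th}$ is $C^{p-1}$ on $U:=B(x^*-t\nabla f(x^*),\delta)$.

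Next I would propagate this to $F_{\mathrm{ALM}}(\cdot;z)$. By continuity of $(x,z)\mapsto x-tz$, there are $r>0$ and a neighborhood $Z$ of $\nabla f(x^*)$ such that $x-tz\in U$ whenever $x\in B(x^*,r)$ and $z\in Z$. For such $z$ and all $x\in B(x^*,r)$,
\[
F_{\mathrm{ALM}}(x;z)=\nabla f(x)+\frac{1}{t}\left(x-tz-\mathrm{prox}_{th}(x-tz)\right)
\]
is a finite sum of $C^{p-1}$ mappings: $\nabla f$ is $C^{p-1}$ (taking $f$ to be $C^p$-smooth, as is already implicitly needed for Theorem~\ref{coro-pgm}), $x\mapsto x-tz$ is affine, and $\mathrm{prox}_{th}$ is $C^{p-1}$ on $U$ by the previous step. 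Hence $F_{\mathrm{ALM}}(\cdot;z)$ is $C^{p-1}$ on $B(x^*,r)$, i.e.\ locally $C^{p-1}$ around $x^*$.

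I do not anticipate a genuine obstacle, as the statement is essentially a corollary of Theorem~\ref{coro-pgm}. The only points needing care are (i) shrinking the $x$- and $z$-neighborhoods simultaneously so that the argument $x-tz$ never leaves the fixed good ball $U$ on which the lemmas deliver smoothness of $\mathrm{prox}_{th}$, and (ii) absorbing the parameter restriction $t<1/\rho$ from {\rm (B2)} (and the ``sufficiently small $t$'' in Lemmas~\ref{lemma:prox-smooth} and \ref{lemma:continuous-diff}) into the hypothesis ``$t>0$ sufficiently small'' of the statement.
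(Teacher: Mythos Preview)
Your proposal is correct and follows essentially the same approach as the paper: invoke Lemma~\ref{lemma:prox-smooth} or Lemma~\ref{lemma:continuous-diff} to get $C^{p-1}$ smoothness of $\mathrm{prox}_{th}$ on a ball around $x^*-t\nabla f(x^*)$, then shrink the $x$- and $z$-neighborhoods so that $x-tz$ stays in that ball. Your write-up is in fact more careful than the paper's two-sentence proof, in particular in flagging the implicit need for $f$ to be $C^p$ so that $\nabla f$ is $C^{p-1}$.
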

\begin{proof}
If (B1) or (B2) holds, it is clear from Lemmas \ref{lemma:prox-smooth} or \ref{lemma:continuous-diff} that for a sufficiently small $t > 0$, the mapping $\mathrm{prox}_{th}$ is locally $C^{p-1}$ around $x^* - t\nabla f(x^*)$. 
Then by taking $z$ sufficiently close to $\nabla f(x^*)$, the gradient mapping $F_{\mathrm{ALM}} (\cdot ; z)$ is locally $C^{p-1}$ around $x^*$.
\end{proof}

\section{Convergence rate analysis of a projected SSN method} \label{sec:convergence}

Our analysis in Section \ref{sec:sc} shows that the residual mappings $F$ constructed by PGM, DRS and ALM are indeed  
locally smooth rather than semismooth if certain conditions, e.g., partial smoothness plus SC, are satisfied. It then implies that the SSN method reduces to the Newton method locally. In this section, we assume that there exists a smooth submanifold $\Mcal$ around a point $x^* \in X^*$ and a neighborhood $\bar{B}(x^*, b_1)$ with $b_1 > 0$ such that $F$ is smooth in $\Mcal \cap \bar{B}(x^*, b_1)$ but is still semismooth in $\bar{B}(x^*, b_1)$. In general, such $\Mcal$ is unknown as it relates to the optimal solution set. 
 In the cases considered in Section \ref{sec:sc}, the manifold $\Mcal$ is simply the full space $\R^n$. More generally, such a manifold may exist and be identified by certain first-order methods, if each component of $F$ is partly smooth and an appropriate version of the SC condition holds \cite{lewis2002active,liang2017activity,bareilles2022newton}. 
 It would be a separate interesting topic to explore the identification of $\Mcal$ without the SC condition.

\begin{eg}
Consider problem \eqref{prob} 
where $f$ is $C^2$ smooth and $h(x) = \lambda \|x\|_1$.
Suppose that $\Mcal=\{x \in \R^n : x_i = 0 {\rm ~if~} x_i^* = 0 \}$ is known where $x^*$ is an optimal solution. In addition, assume that SC does not hold at $x^*$, i.e., there exists some $i_0$ with $x_{i_0}^* = 0$ such that $[|\nabla f(x^*)|]_i = \lambda$. Note that $x^* = {\rm prox}_{th}(x^* - t \nabla f(x^*))$ for  $0 < t \leq 1/L_f $ with $L_f$ being the Lipschitz constant of $\nabla f$. Denote $z = x^* - t \nabla f(x^*)$. It then holds that $z_i > t$ or $z_i < -t$ for any $i \in {\rm supp}(x^*)$. 
Hence, for any $y \in \Mcal$ sufficiently close to $x^*$, we have for any $i \in {\rm supp}(x^*)$,
$(y - t \nabla f(y))_i > t$ if $z_i > t$ and $(y - t \nabla f(y))_i < -t$ if $z_i < -t$. Since ${\rm prox}_{th}$ is separable and each component function is smooth over $(t, +\infty) \cup (-\infty, -t)$, $F_{\rm PGM}$ is local smooth around $x^*$ on $\Mcal$, but not smooth over the neighborhood of $x^*$ in $\R^n$ (because the $i_0$-th component function of ${\rm prox}_{th}$ is nonsmooth around $x^*$). Note that if the SC holds at $x^*$, the domain of the local smoothness of $F_{\rm PGM}$ can be improved from $\Mcal$ to $\R^n$. 
\end{eg}

To utilize the local smoothness of $F$ over $\Mcal$, we need to restrict the iterates to $\Mcal$. Then, we consider solving the following system
\be \label{eq:manifold-ss-eq}  F(x) =0, \quad x\in \Mcal. \ee
To solve \eqref{eq:manifold-ss-eq}, we construct a projected SSN method as follows. Let $P_k \in \R^{n \times n}$ be the matrix representation of the orthogonal projection operator onto the tangent space $T_{x_k} \Mcal$ to $\Mcal$ at $x_k$. Starting from a point $x_k$ close enough to $x^*$, we consider the following projected SSN update
\be \label{eq:proj-ssn} (J_k P_k + \mu_k I ) d_k = - F(x_k) + r_k, \quad 
x_{k+1} = P_{\Mcal}(x_k + d_k), \ee
where $\mu_k = \|F(x_k)\|$ and the accuracy of \eqref{eq:proj-ssn} satisfies
    \be \label{eq:rk} \frac{\|r_k\|}{\mu_k} \leq L_3 \|F(x_k)\|^{q} \ee
    with $L_3 >0$ and $q \in (1,2]$.

\begin{remark}
    When $\Mcal = \R^n$, both $P_k$ and $P_\Mcal$ reduce to the identity mappings, and the SSN update \eqref{eq:proj-ssn} is exactly the same as \eqref{eq:ssn-it}. We note that the projected SSN update \eqref{eq:proj-ssn} is different from the Riemannian Newton method for solving $F(x) = 0, x\in \Mcal$ as the direction $d_k$ used in \eqref{eq:proj-ssn} may not lie in the tangent space $T_{x_k}\Mcal$, and $J_k$ may not be the Jacobian of the smooth restriction $F|_{\mathcal{M}}$.
\end{remark}

Before showing the convergence analysis, let us start with the following assumptions.
\begin{assum}
For a point $x^* \in X^*$, let $\Mcal$ be a smooth submanifold of $\R^n$ containing $x^*$.
\begin{itemize}
\item[{\rm (A1)}] There exists $b_1> 0$ such that $F$ is Lipschitz continuous on $\bar{B}(x^*,b_1)$ with modulus $L_2$. In addition, there exists a constant $L_1$ such that  
    \be \label{eq:quad-bound1} \| F(y) - F(x) - J(x)(y-x) \| \leq L_1 \|y-x\|^2, \;\; \forall x, y \in \Mcal \cap \bar{B}(x^*, b_1),\ee
    where $J(x)$ is any element of B-Jacobian of $F$ at $x$.
\item[{\rm (A2)}] There exists $b_2>0$ such that for all $x \in \Mcal \cap \bar{B}(x^*,b_2)$, it holds that 
\be \label{eq:eigen-J2} \|(J(x)P(x) + \mu(x)I)^{-1}\| \leq \mu(x)^{-1}, \ee
where $P(x)$ is the matrix representation of the orthogonal projection operator onto the tangent space $T_{x} \Mcal$.
\item[{\rm{(A3)}}] A local error bound condition holds for $F$ at $x^*$, i.e., there exist $b_3>0$ and $\gamma>0$ such that for all $x\in \Mcal \cap \bar{B}(x^*, b_3)$,
    \be \label{eq:errorbound} \|F(x)\| \geq \gamma \mathrm{dist}(x, X^* \cap \Mcal).\ee 
    \end{itemize}
For the ease of the subsequent analysis, we set
\begin{equation} \label{eq:def-b}
    b := \min\left\{b_2, b_3, \gamma b_1/(2L_1 + 2L_2 + 2\gamma + 2L_3L_2^q \gamma), 1, \gamma/(L_1 + L_2 + \gamma + L_3L_2^q \gamma)
 \right\}.
\end{equation}
\end{assum}

\begin{remark} 
The assumption (A1) corresponds to the aforementioned smoothness of $F$ around $x^*$ on $\Mcal$. As shown in Theorems \ref{coro-pgm}, \ref{coro-drs}, and \ref{coro-alm}, the semismooth mappings $F_{\rm PGM} $, $F_{\rm DRS}$, and $F_{\rm ALM}$ satisfy (A1) with $\Mcal = \R^n$ under the corresponding smoothness conditions, e.g., partial smoothness.  More generally, for any locally Lipschitz semialgebraic mapping $F$, applying \cite[Corollary 9]{bolte07clarke} to each component of $F$, we obtain a finite partition of $\mathbb{R}^n$ into smooth submanifolds $\mathcal{M}_1, \dots, \mathcal{M}_k$ such that (A1) holds for every pair $(x^*,\, \mathcal{M}_i)$ such that $x^* \in \mathcal{M}_i$.
\end{remark}

\begin{remark} 
When $\Mcal =\R^n$, (A2) reduces to Assumption \ref{assum:invert-jacobian}, which holds if the conditions of Theorem \ref{thm:eigen} are satisfied. In addition, if $J(x)$ is positive semidefinite, then it is easy to verify that \eqref{eq:eigen-J} holds, and so does \eqref{eq:eigen-J2}.
Compared with \cite{fan2004inexact}, our local error bound assumption, (A2), is required to be satisfied on the submanifold $\Mcal$ instead of $\R^n$. For the case of $\Mcal = \R^n$, (A2) has served as an alternative condition to the nonsingularity of $F$ for the superlinear convergence in the smooth setting \cite{fan2004inexact} and has been shown to hold in many scenarios \cite{pang1987posteriori,luo1992linear,tseng2010approximation,zhou2017unified}. It follows from the Lipschitz continuity of $F$ that $\gamma \leq L_2$. 
\end{remark}

Note that the superlinear convergence results of the LM method for smooth $F$ and an inexact Newton method for set-valued $F$ are presented in \cite{fan2004inexact,fischer2002local}. However, these analyses either rely on the smoothness of $F$ or solving inner nonsmooth equations efficiently. Instead, we only require that $F$ is smooth on a smooth submanifold  $\Mcal$ around the solution set, which is weaker than the smoothness in \cite{fan2004inexact}.

\subsection{Local superlinear convergence}
We are going to show the superlinear convergence of the projected SSN method \eqref{eq:proj-ssn}. 
Note that the manifold $\Mcal$ is generally nonconvex. This requires us to be more careful on the analysis on the projection operator. As the smooth submanifold is generally proximally smooth \cite{clarke1995proximal,davis2020stochastic}, the projection $P_\Mcal$ in \eqref{eq:proj-ssn} is single-valued if $\|d_k\|$ is small enough. In addition, we need the following lemma on the Lipschitz-type and contraction-like properties of $P_\Mcal$ and a different variant of the smoothness condition \eqref{eq:quad-bound1}.

\begin{lemma} \label{lem:prox-smooth}
Suppose that $F$ satisfies (A1). Let $x$ be any element of $\Mcal \cap \bar{B}(x^*,b_1)$. 
\begin{itemize}
    \item[\rm (a)] There exists a $\beta > 0$ such that for any $d \in \R^{n}$,
\be \label{eq:proj-lip} \| P_\Mcal(x + d) - x - P(x)d \| \leq \beta \|d\|^2. \ee
    \item[\rm (b)] For any $y \in \R^n$, it holds that
    \be \label{eq:proj-nonexp} \| P_\Mcal(y) - x \| \leq \| P_\Mcal(y) - y + y - x \| \leq 2\|x - y\|.  \ee
    \item[\rm (c)] For any $y \in \Mcal \cap \bar{B}(x^*,b_1)$, it holds that
    \be \label{eq:quad-bound0}\begin{aligned}
& \| F(y) - F(x) - J(x)P(x)(y-x) \| \\
\leq &  \| F(y) - F(x) - J(x)(y-x)  \| + \|J(x)(I-P(x))(y-x) \| \\
\leq & L_1\|y-x\|^2 + \frac{n L_2}{2R} \|y-x\|^2,
\end{aligned}
\ee
where $R > 0$ is some constant only depending on $x^*, b_1$ and $\mathcal{M}$.
\end{itemize}
\end{lemma}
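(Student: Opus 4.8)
The three parts are essentially independent, and I would settle the short ones, (b) and (c), before concentrating on (a). For (b), the displayed middle quantity is just $\|(P_\Mcal(y)-y)+(y-x)\|$; bounding it by $\|P_\Mcal(y)-y\|+\|y-x\|$ via the triangle inequality and using that $x\in\Mcal$ is feasible for the projection, so that $\|P_\Mcal(y)-y\|=\dist(y,\Mcal)\le\|y-x\|$, gives the bound $2\|x-y\|$. For (c), I would insert $J(x)(y-x)$ and write $F(y)-F(x)-J(x)P(x)(y-x)=\big(F(y)-F(x)-J(x)(y-x)\big)+J(x)\big(I-P(x)\big)(y-x)$, so the triangle inequality yields the first displayed inequality; the first summand is $\le L_1\|y-x\|^2$ by (A1)--\eqref{eq:quad-bound1}, and for the second I would use $\|J(x)\|\le L_2$ (every element of $\partial_B F$ on $\bar{B}(x^*,b_1)$ is a limit of Jacobians of the $L_2$-Lipschitz map $F$) together with a quadratic bound on the normal component $(I-P(x))(y-x)$ of $y-x$ at $x$. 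That last bound comes from proximal smoothness of the $C^p$ ($p\ge2$) submanifold $\Mcal$: on the compact slice $\Mcal\cap\bar{B}(x^*,b_1)$ there is a constant $R>0$ \cite{clarke1995proximal,davis2020stochastic} so that, for each unit normal $\nu\in N_{\Mcal}(x)$ and each $r<R$, the point $x+r\nu$ projects onto $x$, whence $\|y-(x+r\nu)\|^2\ge r^2$ for $y\in\Mcal$ and therefore $\iprod{\nu}{y-x}\le\tfrac{1}{2r}\|y-x\|^2$; letting $r\uparrow R$ and summing over an orthonormal basis of $N_{\Mcal}(x)$ (of dimension at most $n$) gives $\|(I-P(x))(y-x)\|\le\tfrac{n}{2R}\|y-x\|^2$, hence $\|J(x)(I-P(x))(y-x)\|\le\tfrac{nL_2}{2R}\|y-x\|^2$. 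The constant $R$ here depends only on $x^*$, $b_1$, and $\Mcal$.

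For (a) I would argue in two regimes. If $\|d\|\ge1$, feasibility of $x$ gives $\|P_\Mcal(x+d)-(x+d)\|\le\|d\|$, so $\|P_\Mcal(x+d)-x-P(x)d\|\le\|P_\Mcal(x+d)-(x+d)\|+\|d\|+\|P(x)d\|\le3\|d\|\le3\|d\|^2$, and any $\beta\ge3$ works there. If $\|d\|$ is small -- at most a uniform fraction of the proximal-smoothness radius, so that $P_\Mcal$ is single-valued and $C^{p-1}$ on a uniform tube around the compact slice with $DP_\Mcal(x)=P(x)$ for $x\in\Mcal$, cf.\ \cite{lewis08projection} -- a first-order expansion of the $C^1$ map $d\mapsto P_\Mcal(x+d)$ at $0$, with the modulus of continuity of $DP_\Mcal$ bounded uniformly over the compact slice, gives $\|P_\Mcal(x+d)-x-P(x)d\|\le\beta\|d\|^2$; patching the two regimes (and enlarging $\beta$ to absorb the intermediate range of $\|d\|$) proves the claim, with $\beta$ uniform over the slice. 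An equivalent hands-on route avoids differentiating $P_\Mcal$: with $w:=P_\Mcal(x+d)$ (single-valued once $\|d\|$ is below the radius) and $\|w-x\|\le2\|d\|$, split $w-x-P(x)d=(I-P(x))(w-x)+P(x)\big(w-(x+d)\big)$; the first term is $O(\|d\|^2)$ exactly as in (c), while the second equals $\big(P(x)-P(w)\big)\big(w-(x+d)\big)$ because $w-(x+d)\in N_{\Mcal}(w)$, and is $O(\|w-x\|\cdot\|d\|)=O(\|d\|^2)$ since $P(\cdot)$ varies Lipschitz-continuously along the $C^2$ manifold.

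The real work is in (a): producing one constant $\beta$ good for \emph{all} $x$ in the compact slice $\Mcal\cap\bar{B}(x^*,b_1)$ and all $d\in\R^n$. That needs a uniform proximal-smoothness radius on the slice (so $P_\Mcal$ is single-valued, $C^{p-1}$, and has derivative $P(x)$ on a uniform tube), a uniform control of the second-order remainder of $P_\Mcal$, and then the gluing of this local quadratic estimate to the crude linear bound valid for large $d$. Parts (b) and (c) are routine once the proximal-smoothness/curvature estimate underlying (c) is available -- and that same estimate is what feeds the hands-on proof of (a).
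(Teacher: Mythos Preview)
Your proposal is correct and follows essentially the same route as the paper: (b) is identical, (c) uses the same proximal-smoothness inequality $\iprod{v}{y-x}\le\tfrac{\|v\|}{2R}\|y-x\|^2$ for $v\in N_{\Mcal}(x)$ combined with (A1) and $\|J(x)\|\le L_2$, and (a) rests on $DP_{\Mcal}(x)=P(x)$ plus compactness of the slice. Your treatment of (a) is in fact more complete than the paper's one-line sketch---the paper invokes only the derivative formula and boundedness, whereas you explicitly handle the large-$\|d\|$ regime (where $P_{\Mcal}$ need not be single-valued or smooth) by the crude $3\|d\|\le 3\|d\|^2$ bound and then patch it with the local Taylor estimate.
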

\begin{proof}
    It follows from \cite[Lemma 3.1]{absil2012projection} that the derivate $DP_{\Mcal}(x) = P(x)$. Then, (a) holds 
    by the boundedness of $\Mcal \cap \bar{B}(x^*,b_1)$. For (b), we have
    \[ \| P_\Mcal(y) - x \| \leq \| P_\Mcal(y) - y + y - x \| \leq 2\|x - y\|.  \]
    Denote by $N_x \Mcal$ the normal space to $\Mcal$ at $x$. By \cite{clarke1995proximal} and \cite[Lemma 3.1]{davis2020stochastic}, there exists a constant $R > 0$ such that for any $x,y \in \Mcal \cap \bar{B}(x^*,b_1)$ and $v \in N_{x}\Mcal$ with $N_x \Mcal$ being the normal space to $\Mcal$ at $x$, 
    $\iprod{v}{y-x} \leq \frac{\|v\|}{2R} \|y-x\|^2$.
   Consequently, (c) holds for any $x, y \in \Mcal \cap \bar{B}(x^*,b_1)$,
    \[ \begin{aligned}
    & \| F(y) - F(x) - J(x)P(x)(y-x) \| \\
    \leq &  \| F(y) - F(x) - J(x)(y-x)  \| + \|J(x)(I-P(x))(y-x) \| \\
    \leq & L_1\|y-x\|^2 + \frac{n L_2}{2R} \|y-x\|^2,
    \end{aligned}
    \]
    where the second inequality is from (A1). 
\end{proof}

With a slight abuse of the notation, we will continue using $L_1$ to denote $L_1 + nL_2/(2R)$. Throughout the following analysis, let $\tilde{X}^*:= X^* \cap \Mcal$ and define $\Pi_{\tilde{X}^*}(x) = \argmin_{y \in \tilde{X}^*} \;\; \|y -x\|$.
Then, we have the following relationship between the Newton direction and the distance of the iterates to the optimal set.

\begin{lemma} \label{lemma:dk}
 Under {\rm (A1), (A2)}, and {\rm (A3)}, there exists a constant $c_1>0$ such that if some $x_{k} \in \Mcal \cap \bar{B}\left(x^{*}, b / 2\right)$, then
\be
\left\|d_{k}\right\| \leq c_1 \operatorname{dist}\left(x_{k}, \tilde{X}^{*}\right)+\mu_{k}^{-1}\left\|r_{k}\right\|.
\ee
\end{lemma}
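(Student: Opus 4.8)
The plan is to write $d_k = (J_kP_k + \mu_k I)^{-1}\bigl(-F(x_k) + r_k\bigr)$, to compare $x_k$ with a nearest solution $\bar{x}_k \in \Pi_{\tilde{X}^*}(x_k)$, and to use the second-order estimate \eqref{eq:quad-bound0} of Lemma \ref{lem:prox-smooth}(c) to trade $-F(x_k)$ for $J_kP_k(\bar{x}_k - x_k)$ modulo a quadratic remainder. The invertibility bound (A2), combined with the algebraic identity $J_kP_k = (J_kP_k + \mu_k I) - \mu_k I$, then recovers a term of size $\|\bar{x}_k - x_k\| = \dist(x_k, \tilde{X}^*)$, and the error bound (A3) converts the quadratic remainder back into a linear one.

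First I would handle the degenerate case $F(x_k) = 0$: then $x_k \in \tilde{X}^*$ (as $\tilde{X}^*$ is closed), so $\dist(x_k, \tilde{X}^*) = 0$, and \eqref{eq:rk} forces $r_k = 0$, so the inequality holds trivially (with $d_k = 0$). Hence I may assume $\mu_k = \|F(x_k)\| > 0$, which also gives $\dist(x_k, \tilde{X}^*) > 0$. Since $x^* \in \tilde{X}^*$ and $x_k \in \bar{B}(x^*, b/2)$, we have $\dist(x_k, \tilde{X}^*) \le \|x_k - x^*\| \le b/2$, so any $\bar{x}_k \in \Pi_{\tilde{X}^*}(x_k)$ satisfies $\|\bar{x}_k - x^*\| \le \dist(x_k, \tilde{X}^*) + \|x_k - x^*\| \le b \le b_1$. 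Thus $x_k$ and $\bar{x}_k$ both lie in $\Mcal \cap \bar{B}(x^*, b_1)$, and, since $b \le \min\{b_2, b_3\}$, the hypotheses (A1)--(A3) and Lemma \ref{lem:prox-smooth}(c) all apply at $x_k$ and for the pair $(x_k, \bar{x}_k)$.

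Now, inserting $F(\bar{x}_k) = 0$ into \eqref{eq:quad-bound0} with base point $x_k$ and second point $\bar{x}_k$ (recalling that $L_1$ has already been rescaled to absorb $nL_2/(2R)$), we obtain $-F(x_k) = J_kP_k(\bar{x}_k - x_k) + w_k$ with $\|w_k\| \le L_1\,\dist(x_k, \tilde{X}^*)^2$. Substituting this together with $J_kP_k = (J_kP_k + \mu_k I) - \mu_k I$ into the formula for $d_k$ gives
\be
d_k = (\bar{x}_k - x_k) - \mu_k (J_kP_k + \mu_k I)^{-1}(\bar{x}_k - x_k) + (J_kP_k + \mu_k I)^{-1}(w_k + r_k).
\ee
Taking norms, using (A2) in the form $\|(J_kP_k + \mu_k I)^{-1}\| \le \mu_k^{-1}$, and $\|\bar{x}_k - x_k\| = \dist(x_k, \tilde{X}^*)$, yields $\|d_k\| \le 2\,\dist(x_k, \tilde{X}^*) + \mu_k^{-1}\|w_k\| + \mu_k^{-1}\|r_k\|$. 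Finally (A3) gives $\mu_k = \|F(x_k)\| \ge \gamma\,\dist(x_k, \tilde{X}^*)$, so $\mu_k^{-1}\|w_k\| \le (L_1/\gamma)\,\dist(x_k, \tilde{X}^*)$, and the claim follows with $c_1 = 2 + L_1/\gamma$.

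As for difficulty, there is no deep obstacle. The one genuinely non-mechanical point is the identity $J_kP_k = (J_kP_k + \mu_k I) - \mu_k I$: it is exactly what allows the otherwise lossy bound $\|(J_kP_k + \mu_k I)^{-1}\| \le \mu_k^{-1}$ (which by itself would only give $\|d_k\| \lesssim 1 + \mu_k^{-1}\|r_k\|$) to produce an estimate proportional to $\dist(x_k, \tilde{X}^*)$. The rest is bookkeeping: keeping $x_k$ and $\bar{x}_k$ inside the balls on which (A1)--(A3) and Lemma \ref{lem:prox-smooth}(c) are valid, and separating the case $F(x_k) = 0$.
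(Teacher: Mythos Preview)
Your proof is correct and follows essentially the same approach as the paper's: compare $x_k$ with a nearest $\bar{x}_k\in\Pi_{\tilde{X}^*}(x_k)$, use the second-order estimate \eqref{eq:quad-bound0} (with the rescaled $L_1$) to replace $-F(x_k)$ by $J_kP_k(\bar{x}_k-x_k)$ plus a quadratic remainder, and then apply (A2) and (A3). The only cosmetic difference is that the paper bundles the $\mu_k(\bar{x}_k-x_k)$ contribution into the remainder $v_k$ (bounding it via $\mu_k\le L_2\|\bar{x}_k-x_k\|$), whereas you handle it directly via $J_kP_k=(J_kP_k+\mu_kI)-\mu_kI$, giving the slightly smaller constant $c_1=2+L_1/\gamma$ instead of the paper's $c_1=1+(L_1+L_2)/\gamma$.
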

\begin{proof}
Pick any $\bar{x}_k\in\Pi_{\tilde{X}^*}(x^k)$. Since $x_{k} \in \Mcal \cap \bar{B}\left(x^{*}, b / 2\right)$, we have 
 $\bar{x}_k\in \bar{B}\left(x^{*}, b\right)$ by noting that 
$\left\|\bar{x}_k-x^{*}\right\| \leq\left\|\bar{x}_k-x_{k}\right\|+\left\|x_{k}-x^{*}\right\| \leq 2\left\|x_{k}-x^{*}\right\|\leq b.$
From (A1) and (A3), it follows that
\be \label{eq:est-muk}
\gamma \left\|\bar{x}_{k}-x_{k}\right\| \leq \mu_{k}=\left\|F_{k}\right\| \leq L_{2}\left\|\bar{x}_{k}-x_{k}\right\| .
\ee
Write $ v_{k}:=-F_{k}-\left(J_{k}P_k+\mu_{k} I\right)\left(\bar{x}_{k}-x_{k}\right). $
By invoking \eqref{eq:quad-bound1}, it follows that
\be \label{eq:vk}
\begin{aligned}
\left\|v_{k}\right\| & \leq \left\|F_{k}+J_{k}P_k\left(\bar{x}_{k}-x_{k}\right)\right\|+\mu_{k}\left\|\bar{x}_{k}-x_{k}\right\| \\
& \leq L_1\|\bar{x}_k-x_k\|^2+L_2\|\bar{x}_k-x_k\|^2
 \leq  (L_1+L_2) \left\|\bar{x}_{k}-x_{k}\right\|^{2}. 
\end{aligned}
\ee
Let $w_{k}:=d_{k}-\left(\bar{x}_{k}-x_{k}\right)$. Then, 
$\left(J_{k}P_k+\mu_{k} I\right) w_{k}=v_{k}+r_{k}$. Putting \eqref{eq:est-muk}, \eqref{eq:vk}, and \eqref{eq:eigen-J2} together gives
$$
\left\|w_{k}\right\| \leq \frac{\left\|v_{k}\right\|+\left\|r_{k}\right\|}{\mu_{k}} \leq \frac{L_{1}+ L_2}{\gamma}\left\|\bar{x}_{k}-x_{k}\right\|+\frac{\left\|r_{k}\right\|}{\mu_{k}}.
$$
 Note that $d_k=w_k+(\bar{x}_{k}-x_k)$. From the last inequality, we show that the desired inequality holds with 
 $c_{1}=\left(L_{1}+L_2\right) / \gamma +1$.
\end{proof}

The following lemma establishes the superlinear convergence of the distance from the iterates to the solution set $\tilde{X}^*$. 
\begin{lemma} \label{lemma:dist-suplinear}
 Under {\rm (A1), (A2)}, and {\rm (A3)}, there exists a constant $c_{2}>0$ such that if $x_{k}, x_{k+1} \in \Mcal \cap \bar{B}\left(x^{*}, b/2\right)$, then
\be \label{eq:dist-suplinear}
\operatorname{dist}\left(x_{k+1}, \tilde{X}^*\right) \leq c_{2} \operatorname{dist}\left(x_{k}, \tilde{X}^*\right)^q.
\ee
\end{lemma}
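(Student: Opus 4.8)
The strategy is to bound $\|F(x_{k+1})\|$ in terms of $\dist(x_k,\tilde{X}^*)$ and then invoke the error bound (A3) at $x_{k+1}$. Fix an index $k$ with $x_k,x_{k+1}\in\Mcal\cap\bar B(x^*,b/2)$. We may assume $\mu_k:=\|F_k\|>0$: otherwise $x_k$ is a root lying on $\Mcal$, hence $x_k\in\tilde{X}^*$, the accuracy bound \eqref{eq:rk} forces $r_k=0$, and the natural choice $d_k=0$ yields $x_{k+1}=x_k\in\tilde{X}^*$, so \eqref{eq:dist-suplinear} holds trivially. Choose $\bar x_k\in\Pi_{\tilde{X}^*}(x_k)$ and write $s_k:=\bar x_k-x_k$, so that $\|s_k\|=\dist(x_k,\tilde{X}^*)$ and $F(\bar x_k)=0$. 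Exactly as in the proof of Lemma~\ref{lemma:dk}, $\bar x_k\in\bar B(x^*,b)\subseteq\bar B(x^*,b_1)$ and \eqref{eq:est-muk} gives $\gamma\|s_k\|\le\mu_k\le L_2\|s_k\|$; moreover $\|s_k\|\le\|x_k-x^*\|\le b/2<1$ and $b\le b_1$, $b\le b_3$ by \eqref{eq:def-b}.

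The first step is to expand $F(x_{k+1})$ about $x_k$. By Lemma~\ref{lem:prox-smooth}(a), $x_{k+1}-x_k=P_kd_k+e_k$ with $\|e_k\|\le\beta\|d_k\|^2$. Applying Lemma~\ref{lem:prox-smooth}(c) with $y=x_{k+1}$ and $x=x_k$ (both in $\Mcal\cap\bar B(x^*,b_1)$) and using $P_k^2=P_k$,
\[
F(x_{k+1})=F_k+J_kP_k(x_{k+1}-x_k)+\rho_k=F_k+J_kP_kd_k+J_kP_ke_k+\rho_k,\qquad \|\rho_k\|\le L_1\|x_{k+1}-x_k\|^2,
\]
where $L_1$ already absorbs the constant $nL_2/(2R)$. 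The defining relation $(J_kP_k+\mu_kI)d_k=-F_k+r_k$ from \eqref{eq:proj-ssn} gives the key identity $F_k+J_kP_kd_k=r_k-\mu_kd_k$, hence
\[
F(x_{k+1})=r_k-\mu_kd_k+J_kP_ke_k+\rho_k.
\]
I will show that each of these four terms is $O(\dist(x_k,\tilde{X}^*)^q)$.

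For the estimates, \eqref{eq:rk} and $\mu_k\le L_2\|s_k\|$ give $\|r_k\|=\mu_k\cdot(\|r_k\|/\mu_k)\le L_3\mu_k^{q+1}\le L_3L_2^{q+1}\|s_k\|^{q+1}$, and likewise $\mu_k^{-1}\|r_k\|\le L_3L_2^{q}\|s_k\|^{q}$. Lemma~\ref{lemma:dk} then yields $\|d_k\|\le c_1\|s_k\|+L_3L_2^{q}\|s_k\|^{q}\le(c_1+L_3L_2^{q})\|s_k\|$ (using $\|s_k\|<1$), so $\|d_k\|$ is bounded by the absolute constant $c_1+L_3L_2^{q}$; consequently $\mu_k\|d_k\|\le L_2(c_1+L_3L_2^{q})\|s_k\|^2$, $\|J_kP_ke_k\|\le L_2\beta\|d_k\|^2$, $\|x_{k+1}-x_k\|\le(1+\beta(c_1+L_3L_2^{q}))\|d_k\|$, and thus $\|\rho_k\|\le C_1\|d_k\|^2$ for a constant $C_1$. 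Collecting these, $\|F(x_{k+1})\|\le A\|s_k\|^2+L_3L_2^{q+1}\|s_k\|^{q+1}$ for an explicit $A$ depending only on $c_1,L_1,L_2,L_3,\beta,q$. Since $\|s_k\|<1$ and $1\le q\le 2$, we have $\|s_k\|^{q+1}\le\|s_k\|^2\le\|s_k\|^{q}$, so $\|F(x_{k+1})\|\le c\,\dist(x_k,\tilde{X}^*)^{q}$ with $c:=A+L_3L_2^{q+1}$. Finally $x_{k+1}\in\Mcal\cap\bar B(x^*,b/2)\subseteq\Mcal\cap\bar B(x^*,b_3)$, so (A3) gives $\dist(x_{k+1},\tilde{X}^*)\le\gamma^{-1}\|F(x_{k+1})\|\le(c/\gamma)\dist(x_k,\tilde{X}^*)^{q}$, which is \eqref{eq:dist-suplinear} with $c_2:=c/\gamma$.

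The work is mostly bookkeeping: one must check that $x_k$, $x_{k+1}$, and $\bar x_k$ all lie in the balls on which (A1)--(A3) and Lemma~\ref{lem:prox-smooth} apply, and one must track how the regularization term $\mu_kI$ and the inexactness $r_k$ propagate into the final bound. The conceptual heart is the identity $F_k+J_kP_kd_k=r_k-\mu_kd_k$ combined with $\mu_k=\Theta(\dist(x_k,\tilde{X}^*))$ and $\|r_k\|=O(\mu_k^{q+1})$: these show that both the shift and the inexactness contribute only higher-order error, so the rate is governed by the quadratic remainder of the linearization, which dominates $\dist(x_k,\tilde{X}^*)^{q}$ precisely because $q\le 2$.
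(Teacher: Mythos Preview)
Your proof is correct and follows essentially the same scheme as the paper: linearize $F$ at $x_k$, evaluate at $x_{k+1}=P_{\Mcal}(x_k+d_k)$, use the Newton system to control $F_k+J_kP_kd_k$, bound the projection and quadratic remainders via Lemma~\ref{lem:prox-smooth} and Lemma~\ref{lemma:dk}, and finish with the error bound (A3). The only stylistic differences are that the paper introduces the exact step $\bar d_k=-(J_kP_k+\mu_kI)^{-1}F_k$ and bounds $\|F_k+J_kP_k\bar d_k\|=\mu_k\|\bar d_k\|$ before adding in the inexactness, whereas you read off the cleaner identity $F_k+J_kP_kd_k=r_k-\mu_kd_k$ directly from \eqref{eq:proj-ssn}; and the paper applies \eqref{eq:quad-bound1} with $J_k$ and then splits off $J_k e_k$, while you apply the projected version \eqref{eq:quad-bound0} and get $J_kP_ke_k$. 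Both routes yield the same order of bounds and the same final constant up to harmless factors.
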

\begin{proof}
 Let $\bar{d}_k = - (J_kP_k + \mu_k I)^{-1} F_k$ be the exact  (regularized) SSN step. Then, 
\[
  \| F_k + J_kP_k \bar{d}_k \| = \mu_k\|\bar{d}_k\| \leq L_2 c_1 [{\rm dist}(x_k,\tilde{X}^*)]^2,
 \]
 where the inequality is from \eqref{eq:est-muk} and Lemma \ref{lemma:dk} with $r_k = 0$.
Note that
$ d_{k}=\bar{d}_k+\left(J_{k}P_k+\mu_{k} I\right)^{-1} r_{k}. $
Then, 
\be \label{eq:flinear}
\begin{aligned}
\left\|F_{k}+J_{k}P_k d_{k}\right\| &=\left\|F_{k}+J_{k}P_k \bar{d}_k+J_{k}P_k\left(J_{k}P_k +\mu_{k} I\right)^{-1} r_{k}\right\| \\
& \leq\left\|F_{k}+J_{k}P_k \bar{d}_k\right\|+\frac{L_{2}\left\|r_{k}\right\|}{\mu_{k}} \\
& \leq L_2 c_1 [{\rm dist}(x_k,\tilde{X}^*)]^2+\frac{L_{2}\left\|r_{k}\right\|}{\mu_{k}},
\end{aligned}
\ee
where the first inequality is from (A2).
From {\rm (A1)} and $b \le 1$, we have $\frac{\left\|r_{k}\right\|}{\mu_{k}}
\le L_3\|F(x_k)\|^q \le L_3L_2^q[{\rm dist}(x_k,\tilde{X}^*)]^q
\le L_3L_2^q{\rm dist}(x_k,\tilde{X}^*)$. Together with Lemma \ref{lemma:dk}, it follows that
\be \label{eq:bound-dk}
\left\|d_{k}\right\| \leq\left(c_{1}+L_3L_2^q \right){\rm dist}(x_k,\tilde{X}^*).
\ee
By \eqref{eq:proj-nonexp}, it holds that $\| P_{\Mcal}(x_k + d_k) - x^* \| \leq 2\|x_k - x^*\| + 2\|d_k\| \leq b_1$ due to \eqref{eq:def-b}. Combining inequalities \eqref{eq:quad-bound1} and \eqref{eq:flinear}-\eqref{eq:bound-dk} yields that
$$
\begin{aligned}
& \left\|F\left(P_\Mcal(x_{k}+d_{k})\right)\right\| \leq \left\|F_{k}+J_{k} (P_\Mcal(x_k + d_{k}) - x_k)\right\|+L_{1}\left\| P_\Mcal(x_k + d_{k}) - x_k \right\|^{2} \\ 
\leq &  \| F_k + J_k P_k d_k \| + L_2 \beta \|d_k\|^2 + L_1 (2\|d_k\|^2 + 2\beta\|d_k\|^4) \\
\leq & L_2 c_1 [{\rm dist}(x_k,\tilde{X}^*)]^2+\frac{L_{2}\left\|r_{k}\right\|}{\mu_{k}}+(2L_{1} + L_2 \beta + 2L_1\beta) \left(c_{1}+L_3L_2^q \right)^2[{\rm dist}(x_k,\tilde{X}^*)]^2 \\
\leq & \big(L_2 c_1+L_{2}^{q+1}L_3+(2L_{1} + L_2 \beta + 2L_1\beta)\left(c_{1}+L_3L_2^q\right)^{2}\big)[{\rm dist}(x_k,\tilde{X}^*)]^q.
\end{aligned}
$$
Thus, we have 
$$
\operatorname{dist}\left(P_{\Mcal}(x_{k}+d_{k}), \tilde{X}^*\right) \leq \gamma^{-1}\left\|F\left(P_{\Mcal}(x_{k}+d_{k})\right)\right\| \leq c_{2} \operatorname{dist}\left(x_{k}, \tilde{X}^*\right)^q
$$
with $c_{2}=\gamma^{-1}\big(L_2 c_1+L_{2}^{q+1} L_3+(2L_{1} + L_2 \beta + 2L_1\beta)\left(c_1+L_3L_2^q\right)^{2}\big)$. 
This completes the proof.
\end{proof}

When $x_0$ is close enough to $\tilde{X}^*$, all the iterates will stay in a small neighborhood of some point $\hat{x} \in \tilde{X}^*$. In addition, $x_k$ converges to $\hat{x}$ superlinearly. 
\begin{theorem} \label{thm:q-convergence}
 If $x_{0}$ is chosen sufficiently close to $\tilde{X}^*$ and the Assumptions {\rm (A1), (A2)}, and {\rm (A3)} hold at $x^* = \Pi_{\tilde{X}^*}(x_0)$,  then $x_k$ converges to some solution $\hat{x}$ of \eqref{prob:nonsmootheq} superlinearly.
\end{theorem}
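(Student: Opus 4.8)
The plan is to run a single induction showing that, once $x_0$ lies close enough to $\tilde X^*$, every iterate stays in $\Mcal\cap\bar B(x^*,b/2)$ and the distances $t_k:=\operatorname{dist}(x_k,\tilde X^*)$ decay $q$-superlinearly; then convergence of $\{x_k\}$ follows from a telescoping/Cauchy argument, and the rate transfers from $t_k$ to $\|x_k-\hat x\|$. First I would fix the size of the basin: set $t_0=\operatorname{dist}(x_0,\tilde X^*)=\|x_0-x^*\|$ and choose $\epsilon_0>0$ small enough that simultaneously $c_2\epsilon_0^{q-1}\le 1/2$, $(1+4(c_1+L_3L_2^q))\epsilon_0\le b/2$, and $\epsilon_0$ is below the proximal-smoothness radius of $\Mcal$ near $x^*$ (so $P_\Mcal$ is single-valued on the relevant ball, cf.\ Lemma~\ref{lem:prox-smooth}); assume $t_0\le\epsilon_0$.

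I would then prove by induction on $k$ that $x_k\in\Mcal\cap\bar B(x^*,b/2)$ and $t_k\le 2^{-k}\epsilon_0$. Granting the hypothesis at step $k$: Lemma~\ref{lemma:dk} applies at $x_k$, and combining it with $\mu_k^{-1}\|r_k\|\le L_3\|F(x_k)\|^q\le L_3L_2^q t_k^q\le L_3L_2^q t_k$ (using \eqref{eq:rk}, (A1), and $b\le 1$) gives $\|d_k\|\le(c_1+L_3L_2^q)t_k$, i.e.\ \eqref{eq:bound-dk}. By \eqref{eq:proj-nonexp} applied with $x=x_k$ and $y=x_k+d_k$, one has $\|x_{k+1}-x_k\|=\|P_\Mcal(x_k+d_k)-x_k\|\le 2\|d_k\|$, so
\[
\|x_{k+1}-x^*\|\le\|x_0-x^*\|+2(c_1+L_3L_2^q)\sum_{j=0}^{k}t_j\le\epsilon_0+4(c_1+L_3L_2^q)\epsilon_0\le b/2,
\]
using $\sum_{j\ge 0}2^{-j}\epsilon_0\le 2\epsilon_0$; and $x_{k+1}\in\Mcal$ since $P_\Mcal$ maps into $\Mcal$. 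Finally Lemma~\ref{lemma:dist-suplinear} applies (both $x_k,x_{k+1}\in\Mcal\cap\bar B(x^*,b/2)$), yielding $t_{k+1}\le c_2 t_k^q=(c_2t_k^{q-1})t_k\le\tfrac12 t_k\le 2^{-(k+1)}\epsilon_0$. This closes the induction.

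With the induction established, $\sum_k\|x_{k+1}-x_k\|\le 2(c_1+L_3L_2^q)\sum_k t_k<\infty$, so $\{x_k\}$ is Cauchy and converges to some $\hat x$ with $\|\hat x-x^*\|\le b/2$. Since $\|F(x_k)\|\le L_2 t_k\to 0$ and $F$ is locally Lipschitz, hence continuous, $F(\hat x)=0$, so $\hat x$ solves \eqref{prob:nonsmootheq}; moreover $\hat x\in\Mcal$ because an embedded submanifold is locally closed, so $\Mcal\cap\bar B(x^*,b/2)$ is closed, and therefore $\hat x\in\tilde X^*$. For the rate, $t_{j+1}\le\tfrac12 t_j$ implies $\sum_{j\ge k}t_j\le 2t_k$, so $\|x_k-\hat x\|\le\sum_{j\ge k}\|x_{j+1}-x_j\|\le 4(c_1+L_3L_2^q)t_k$; conversely $\hat x\in\tilde X^*$ gives $t_k\le\|x_k-\hat x\|$. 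Hence $\|x_{k+1}-\hat x\|\le 4(c_1+L_3L_2^q)t_{k+1}\le 4(c_1+L_3L_2^q)c_2 t_k^q\le 4(c_1+L_3L_2^q)c_2\|x_k-\hat x\|^q$, and since $q>1$ this is $o(\|x_k-\hat x\|)$, i.e.\ $x_k\to\hat x$ superlinearly (indeed with order $q$).

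\textbf{Main obstacle.}
The delicate part is the bookkeeping in the induction: choosing $\epsilon_0$ so that it simultaneously guarantees (i) all iterates remain in $\bar B(x^*,b/2)$, keeping Lemmas~\ref{lemma:dk} and \ref{lemma:dist-suplinear} applicable; (ii) $P_\Mcal$ is single-valued along the iteration (via proximal smoothness of $\Mcal$); and (iii) the contraction factor $c_2 t_k^{q-1}$ stays below $1$. The analytical ingredients are all already in place; the work is assembling them self-consistently, plus the minor point that the limit lies in $\tilde X^*$ rather than merely in $X^*$.
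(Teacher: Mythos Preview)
Your proposal is correct and follows essentially the same scheme as the paper: an induction keeping all iterates inside $\Mcal\cap\bar B(x^*,b/2)$ so that Lemmas~\ref{lemma:dk} and~\ref{lemma:dist-suplinear} remain applicable, then a Cauchy argument via $\|x_{k+1}-x_k\|\le 2\|d_k\|$ and \eqref{eq:bound-dk}, and finally the transfer of the $q$-rate from $\operatorname{dist}(x_k,\tilde X^*)$ to $\|x_k-\hat x\|$. Your bookkeeping is slightly cleaner (you track only the geometric decay $t_k\le 2^{-k}\epsilon_0$, whereas the paper iterates \eqref{eq:dist-suplinear} to the sharper $t_i\le r\,2^{-q^i}$), and you explicitly justify $\hat x\in\tilde X^*$ via local closedness of $\Mcal$, a point the paper leaves implicit but needs for the final inequality $\operatorname{dist}(x_k,\tilde X^*)\le\|x_k-\hat x\|$.
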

\begin{proof} Let $\bar{x}_k = \Pi_{\tilde{X}^*}(x_k)$ for all $k$ and
$$
r=\min \left\{\frac{1}{2c_{2}^{1/(q-1)}}, \frac{b}{4\left[1 + c_1 2^{q-1}/(2^{q-1} - 1) + 2L_3L_2^q2^{q-1}/(2^{q-1} - 1)\right]}\right\},
$$
where the constants $c_{1}, c_{2}, L_3$ are defined previously. Firstly, we show by induction that if $x_{0} \in \bar{B}\left(\bar{x}_0, r\right)$, then $x_{k} \in \bar{B}\left(\bar{x}_0, b / 2\right)$ for all $k \ge 1$.
It follows from Lemma \ref{lemma:dk} that
$$
\begin{aligned}
\left\|x_{1}-\bar{x}_0\right\| &=\left\|P_{\Mcal}(x_{0}+d_{0})-\bar{x}_0\right\|
\leq 2\left\|x_{0}-\bar{x}_0\right\|+2 \| d_{0}\| \\
& \leq 2 \left\|x_{0}-\bar{x}_0\right\|+ 2 \left(c_{1}+L_3L_2^q\right)\left\|x_{0}-\bar{x}_{0}\right\| \\
& \leq 2\left(1+c_{1}+L_3L_2^q\right) r \leq b / 2.
\end{aligned}
$$
This gives $x_{1} \in \bar{B}\left(\bar{x}_0, b / 2\right)$. Suppose that $x_{i} \in \bar{B}\left(\bar{x}_0, b / 2\right)$ for $i=2, \cdots, k$. By Lemma \ref{lemma:dist-suplinear}  and $c_2 \geq 1$, we have
$$
\left\|x_{i}-\bar{x}_{i}\right\| \leq c_{2}\left\|x_{i-1}-\bar{x}_{i-1}\right\|^{q} \leq \cdots \leq c_{2}^{\frac{q^i-1}{q-1}}\left\|x_{0}-\bar{x}_0\right\|^{q^{i}} \leq r 2^{-q^{i}}.
$$
It then follows from the definition of $r$ that 
$$
\begin{aligned}
\left\|x_{k+1}-\bar{x}_0\right\| & \leq\left\|x_{1}-\bar{x}_0\right\|+ 2 \sum_{i=1}^{k}\left\|d_{k}\right\| \\
& \leq 2\left(1+c_{1}+L_3L_2^q\right) r+ 2 \left(c_{1}+L_3L_2^q\right) \sum_{i=1}^{k}\left\|x_{i}-\bar{x}_{i}\right\| \\
 & \leq 2 \left(1+c_{1}+L_3L_2^q\right) r+ 2 r\left(c_{1}+L_3L_2^q\right) \sum_{i=1}^{\infty} \left(2^{-(q-1)}\right)^i \\
& \leq 2 r\left[ 1 + c_1 2^{q-1}/(2^{q-1} - 1) + 2L_3L_2^q2^{q-1}/(2^{q-1} - 1) \right] \\
& \leq b / 2.
\end{aligned}
$$
This gives $x_{k+1} \in \bar{B}\left(\bar{x}_0, b / 2\right)$. Thus, if $x_{0}$ is chosen sufficiently close to $\tilde{X}^*$, then $x_{k}$ lie in $\bar{B}\left(\bar{x}_0, b / 2\right)$ for all $k\ge 1$. It follows from Lemma \ref{lemma:dist-suplinear} that
$
\sum_{k=0}^{\infty} \operatorname{dist}\left(x_{k}, \tilde{X}^*\right)<+\infty.
$
Thus, for any $\epsilon > 0$, there exists a $K> 0$ such that for any $K \leq k_1 \leq k_2$,
\be \label{eq:Cauchy} \|x_{k_2} - x_{k_1} \| \leq \sum_{i = k_1}^{k_2 - 1} \|x_{i+1} - x_{i}\| \leq 2 \sum_{i = k_1}^{k_2 - 1} \|d_i\| \leq 2(c_1 + L_3L_2^q) \sum_{i = k_1}^{k_2 - 1} {\rm dist}(x_{i}, \tilde{X}^*) \leq 
\epsilon,  \ee
where the third inequality is from \eqref{eq:bound-dk}.
This means that $\{x_k\}_{k\geq 0}$ is a Cauchy sequence and hence converges to some point $\hat{x} \in \tilde{X}^*$. Taking $k_1 = k$ such that $({\rm dist}(x_{k}, \tilde{X}^*))^{q-1} \leq \rho$ with $\rho < 1$ and $k_2$ goes to infinity in \eqref{eq:Cauchy} lead to
\[ \begin{aligned}
\|x_{k} - \hat{x} \| & \leq 2(c_1 + L_3L_2^q) \sum_{i = k}^{\infty} {\rm dist}(x_i, \tilde{X}^*) \\
& \leq 2(c_1 + L_3L_2^q)  \sum_{i = 0}^{\infty} \rho^i {\rm dist}(x_{k}, \tilde{X}^*) \leq \frac{2(c_1 + L_3L_2^q)}{1-\rho}  {\rm dist}(x_{k}, \tilde{X}^*).
\end{aligned}
 \]
This, together with Lemma \ref{lemma:dist-suplinear}, implies that
\[ \|x_{k+1} - \hat{x}\| \leq \frac{2(c_1 + L_3L_2^q)}{1-\rho}  {\rm dist}(x_{k+1}, \tilde{X}^*) \leq \mathcal{O}({\rm dist}(x_{k}, \tilde{X}^*)^q) \leq \mathcal{O}(\| x_k - \hat{x} \|^q), \]
which means that $x_{k}$ converges to the solution $\hat{x}$ superlinearly. We complete the proof.
\end{proof}

\section{Numerical verification} \label{sec:num}
In this section, we conduct numerical experiments on the Lasso problem and basis pursuit to validate our theory of locally superlinear convergence rate. Note that the two sufficient conditions (A1) and (A2) are numerically verified to be satisfied in these two examples.

\subsection{The Lasso problem}
We first  empirically check BD-regularity and SC on the Lasso problem: $ \min_{x \in \R^n}  \frac{1}{2}\|Ax - b\|_2^2 + \lambda \|x\|_1$, 
where $A \in \R^{m\times n}$, $b \in \R^m$ and $\lambda >0$ is the regularization parameter. Let $x$ be a solution and $T_1$ be its support, i.e., $T_1 := \{ i: x_i \ne 0 \}$. Let $T_2:= \{ i: x_i = 0, \; |(A^\top (Ax - b))_i| = \lambda \}$ and $S = T_1 \cup T_2$.
 Define $A_S = A(:, S)$ as the submatrix corresponding to $S$. As shown in Examples \ref{eg} and \ref{eg:lasso-drs}, the BD-regularity conditions of the natural residual \eqref{eq:pgm} and the DRS residual \eqref{eq:drs} hold if and only if $A_S^\top A_S$ is positive definite. From \cite{stella2017forward}, as long as $T_2 = \emptyset$, SC holds. It is worth mentioning that SC is empirically observed to be satisfied for the Lasso problem for randomly generated $A$ \cite{hale2008fixed}.

Let us show a numerical setting to verify the superlinear convergence under SC.  Consider a matrix $A \in \R^{m\times n}$ where  two columns, whose indices are denoted by  $i_1 =$ ind(1) and $i_2 =$ ind(2),  are the same (one can also generalize to multiple columns with linear dependence). It is easy to check that if  $x^*$ is a solution, any $v$ from the following set is also a solution:
$$\{v \in \R^n \,:\, v_{i_1} + v_{i_2} = x^*_{i_1} + x^*_{i_2}, \, |v_{i_1}| + |v_{i_2}| = |x^*_{i_1}| + |x^*_{i_2}|,\, v_i = x^*_i \mathrm{~if~} i\ne i_1\mathrm{~or~} i_2\}. $$ Hence, whenever $x_{i_1}^*$ or $x_{i_2}^*$ is nonzero, $x^*$ is not an isolated solution.  Specifically, we  construct a Gaussian random $A \in \R^{m\times n}$ and a vector $b$ with $m=64$ and $n=128$ using the following MATLAB commands:
\begin{lstlisting}[frame=single]
 A = randn(m,n); u = sprandn(n,1,0.1); 
ind = find(u>1e-7); A(:,ind(1)) = A(:,ind(2)); b = A*u;
\end{lstlisting}
where ``randn'', ``sprandn'', and ``find'' are built-in functions in MATLAB. 
The parameter $\lambda$ is set to $10^{-3}$. 

The natural residual based SSN method is utilized to solve the corresponding problem. Let $x^*$ be the obtained solution, $T_2$ and $S$ be defined as above with $x_i^*$ considered as $0$ when $|x_i^*| < 10^{-7}$. Since the minimum eigenvalue  $\lambda_{\min}(A_S^\top A_S)$ is $7.9\times 10^{-16}$,  BD regularity does not hold from Example \ref{eg}.  
 Although $F(x)$ becomes linear in a small neighborhood of $x^*$ according to the theory  in Section \ref{sec:convergence}, the Jacobian is singular and the standard Newton method without regularization may have the numerical issues in solving a singular linear equation. However, the SSN method \eqref{eq:ssn} reduces to the Newton method with regularization. Because the minimal value of $\{ ||(A^\top (Ax^*-b))_i| - \lambda |: i \in T_2 \}$ is $1.1\times 10^{-4}$, SC holds.  Our theory also shows that  the iterates will converge superlinearly. Figure \ref{fig:ssn} shows the iteration history, which matches our theoretical results. 

\begin{figure}[h]
     \centering
    \subfloat[$\|F(x^k)\|_2$]{
    \includegraphics[width = 0.33\textwidth,height=0.25\textwidth]{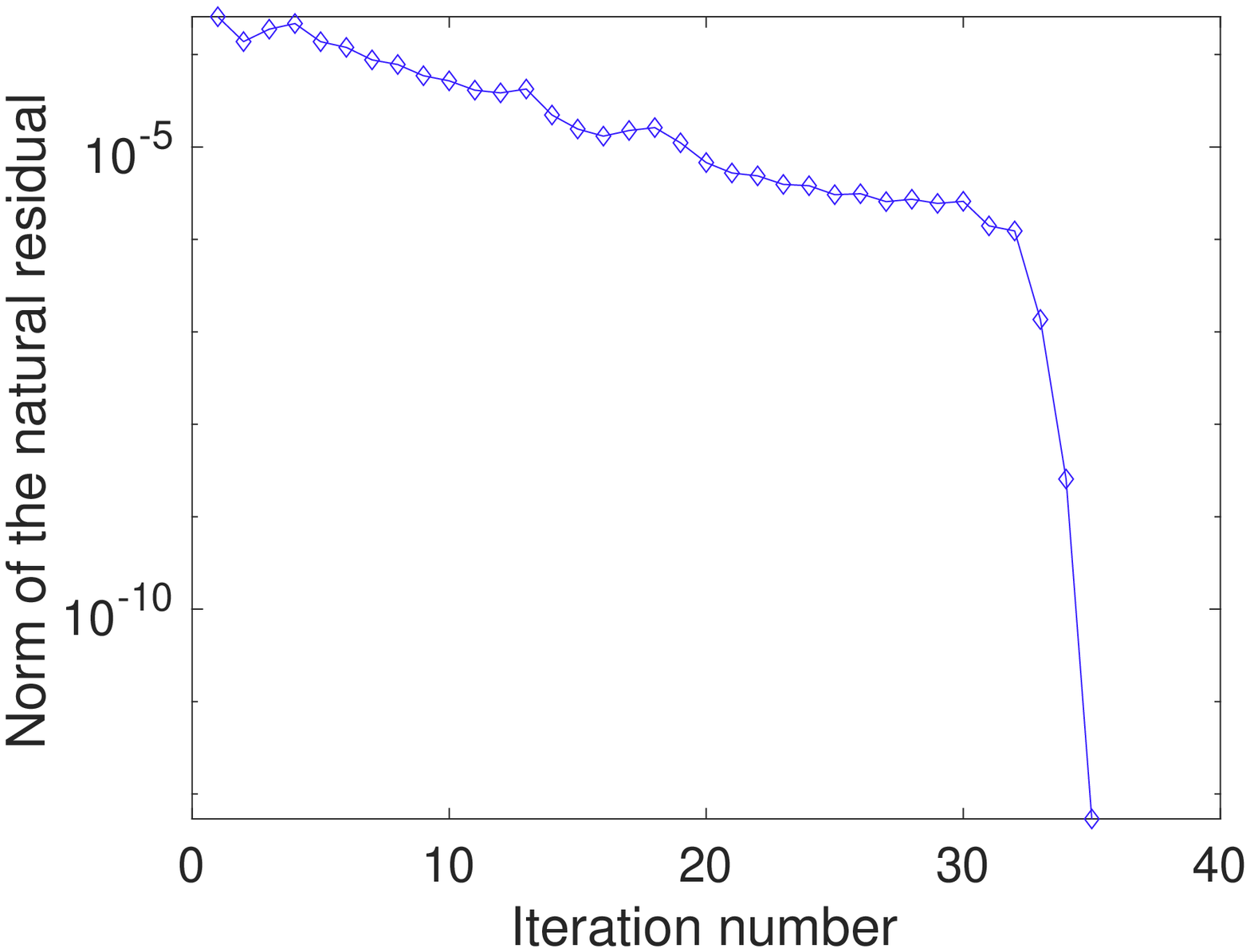}}
    \subfloat[The  solution]{
    \includegraphics[width = 0.33\textwidth,height=0.25\textwidth]{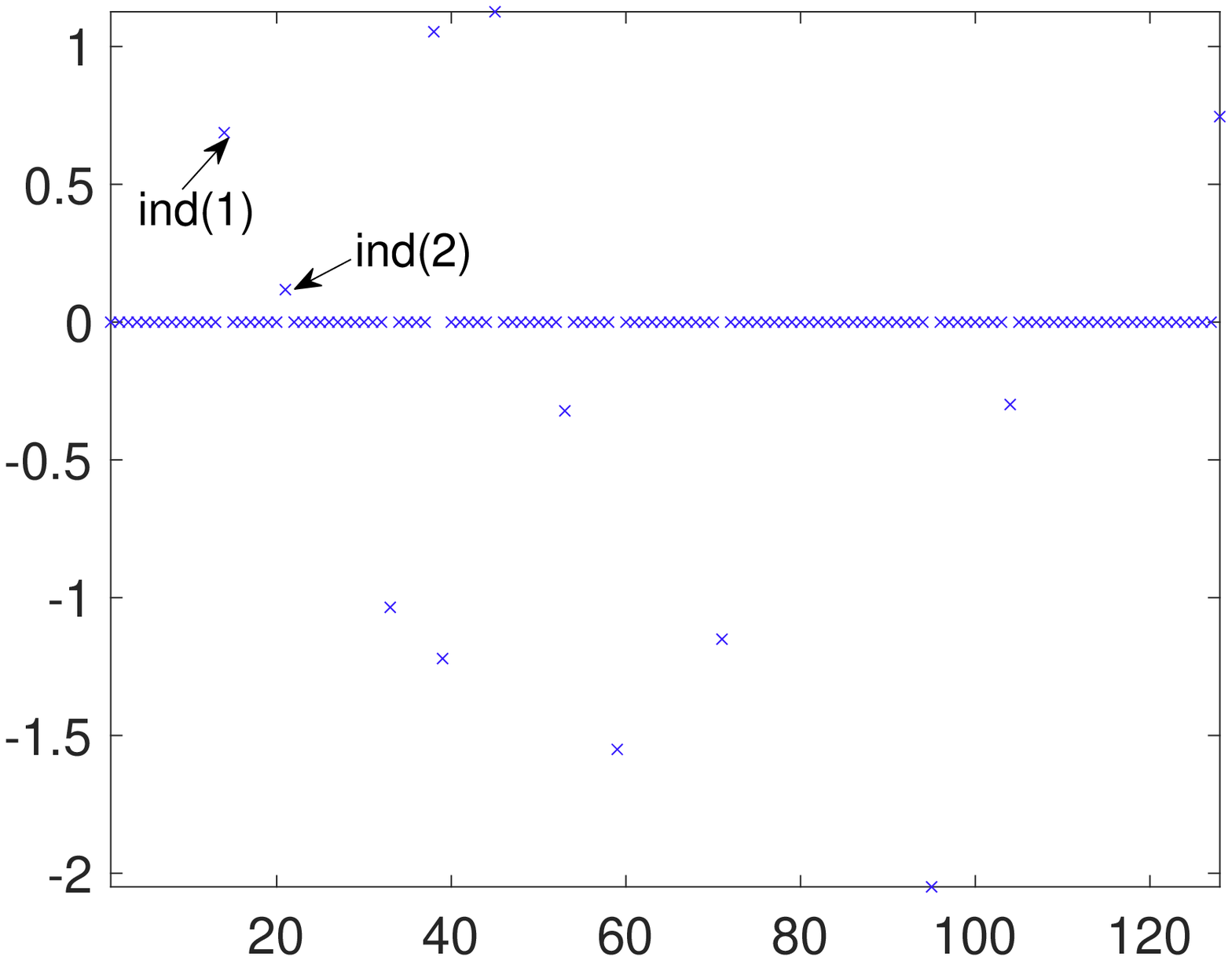}}
    \caption{Natural residual based SSN method for solving Lasso problem. For the last iteration point $x^*$, SC holds, while BD-regularity does not.  (a). Locally superlinear convergence rate is observed. (b). Both $x_{i_1}^*$ and $x_{i_2}^*$  in $x^*$ are non-zero. It implies that $x^*$ is not an isolated solution.}
    \label{fig:ssn}
\end{figure}

\subsection{Basis pursuit}

Consider the basis pursuit problem \eqref{bp-primal1} and its dual problem \eqref{bp-dual1}. Let $f(x) = \delta_{\{x:Ax=b\}}(x)$ and $h(x) = \|x\|_1$.
By the equivalence between the alternating direction method of multipliers and the DRS method \cite{yan2016self,li2018semismooth}, the dual solution is $y^* = A(\mathrm{prox}_{th}(z^*) - z^*)/t$ with $z^*$ being the root of $F$. 

Consider a Gaussian random $A \in \R^{m\times n}$ with $m=64$ and $n=128$. Set $\lambda = 10^{-3}$ and use the following commands in Matlab to generate $A$ and $b$:
\begin{lstlisting}[frame=single]
 A = randn(m,n); u = sprandn(n,1,0.1); ind = find(u>1e-7);
 A(:,ind(1))= A(:,ind(2)); [Q,~] = qr(A',0); A = Q'; b = A*u;
\end{lstlisting}
where ``randn'', ``sprandn'',  ``find'' and ``qr'' are built-in functions in MATLAB. Here, we orthogonalize $A$ for the ease of implementations of $\prox_{tf}$. 
It holds that $\mathrm{prox}_{tf}(x) = x - A^\top(Ax - b)$. If $x^*$ is a solution, then any $v$ from the set,
$$\{v \in \R^n \,:\, v_{i_1} + v_{i_2} = x^*_{i_1} + x^*_{i_2}, \, |v_{i_1}| + |v_{i_2}| = |x^*_{i_1}| + |x^*_{i_2}|,\, v_i = x^*_i \mathrm{~if~} i\ne i_1\mathrm{~or~} i_2\} $$ with $i_1 =$ ind(1) and $i_2 =$ ind(2), is also a solution. Hence, whenever $x_{i_1}^*$ or $x_{i_2}^*$ is not zero, $x^*$ is not an isolated solution.

Figure \ref{fig:ssn-drs-BP} shows the results of using the DRS residual based SSN method.
In this example, 
 the SC holds at $(x^*,y^*)$ by following Example \ref{eq:bp-sc}. To be specific, let $\Omega(x^*)=\{i: x^* = 0\}$ denote the index set where the entries in $x^*$ are zero. All elements in $\{1 - |A^\top y^*|_i : i\in \Omega(x^*)\}$ are exactly zero. For all $i$ in the index set $\Omega^c(x^*)$, the minimal value of $|1 - |A^\top y^*|_i|$ is 0.2. This means that either $1-|A^\top y^*|_i$ or $x_i^*$ is zero but not both. We plot the entries of $x^*$ and $1-|A^\top y^*|$ in (b) and (c), respectively. The BD-regularity condition is not satisfied due to the nonisolatedness of the solutions. The superlinear convergence rate is observed, which validates our theoretical arguments.

\begin{figure}[h]
    \begin{center}
    \subfloat[$\|F(z^k)\|_2$]{
    \includegraphics[width = 0.32\textwidth,height=0.25\textwidth]{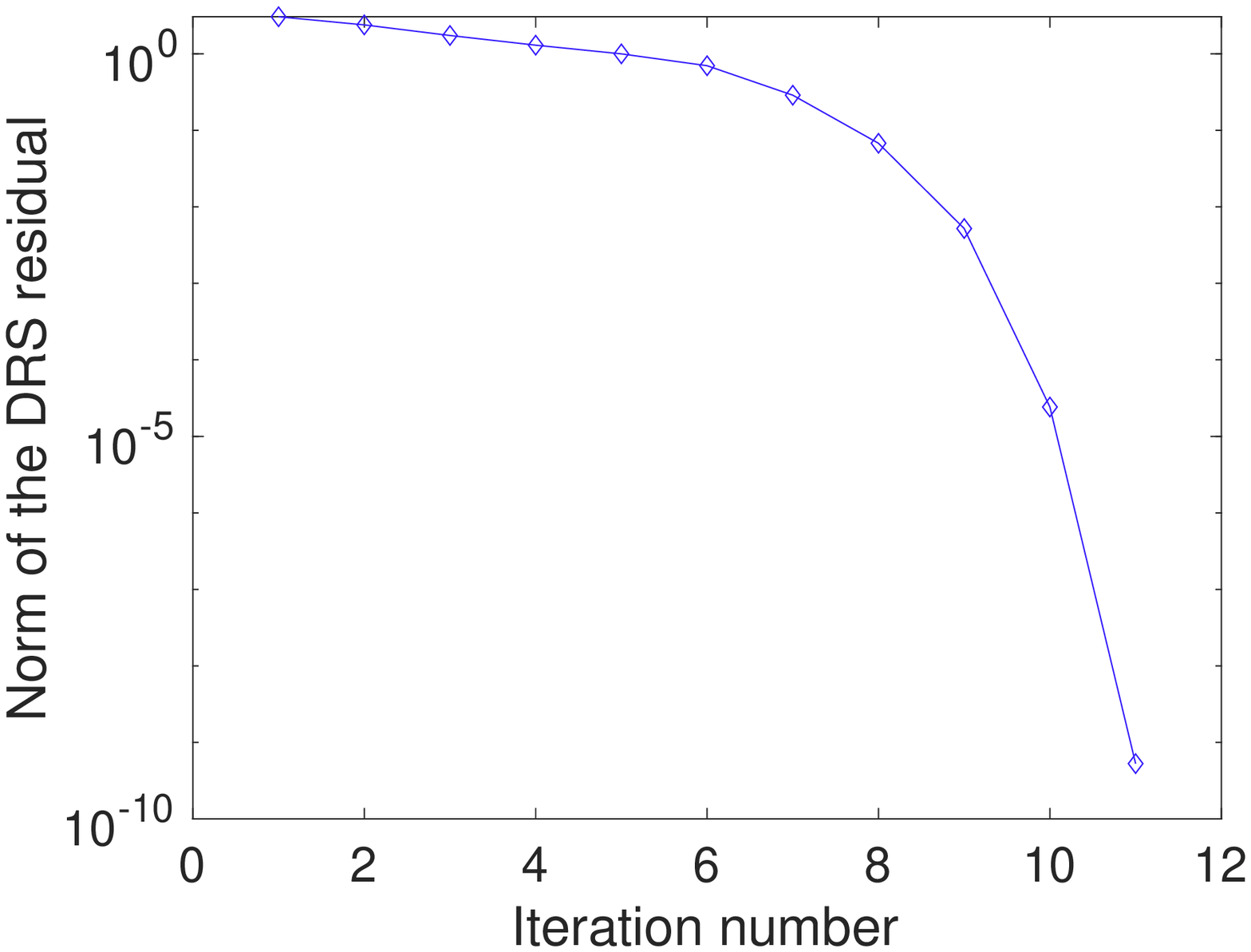}}
    \subfloat[The solution]{
    \includegraphics[width = 0.32\textwidth,height=0.25\textwidth]{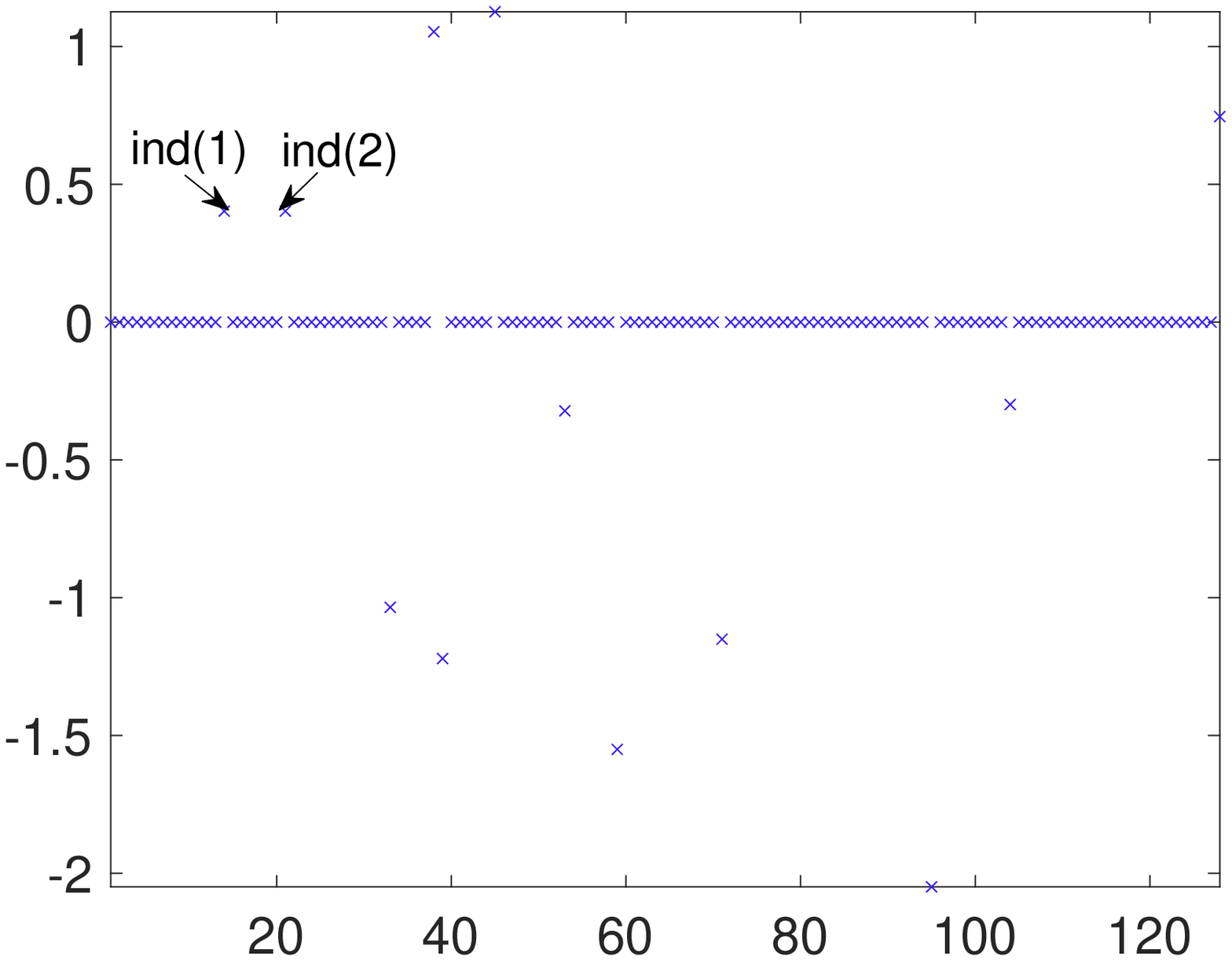}}
    \subfloat[$1-|A^\top y^*|$ ]{
    \includegraphics[width = 0.32\textwidth,height=0.25\textwidth]{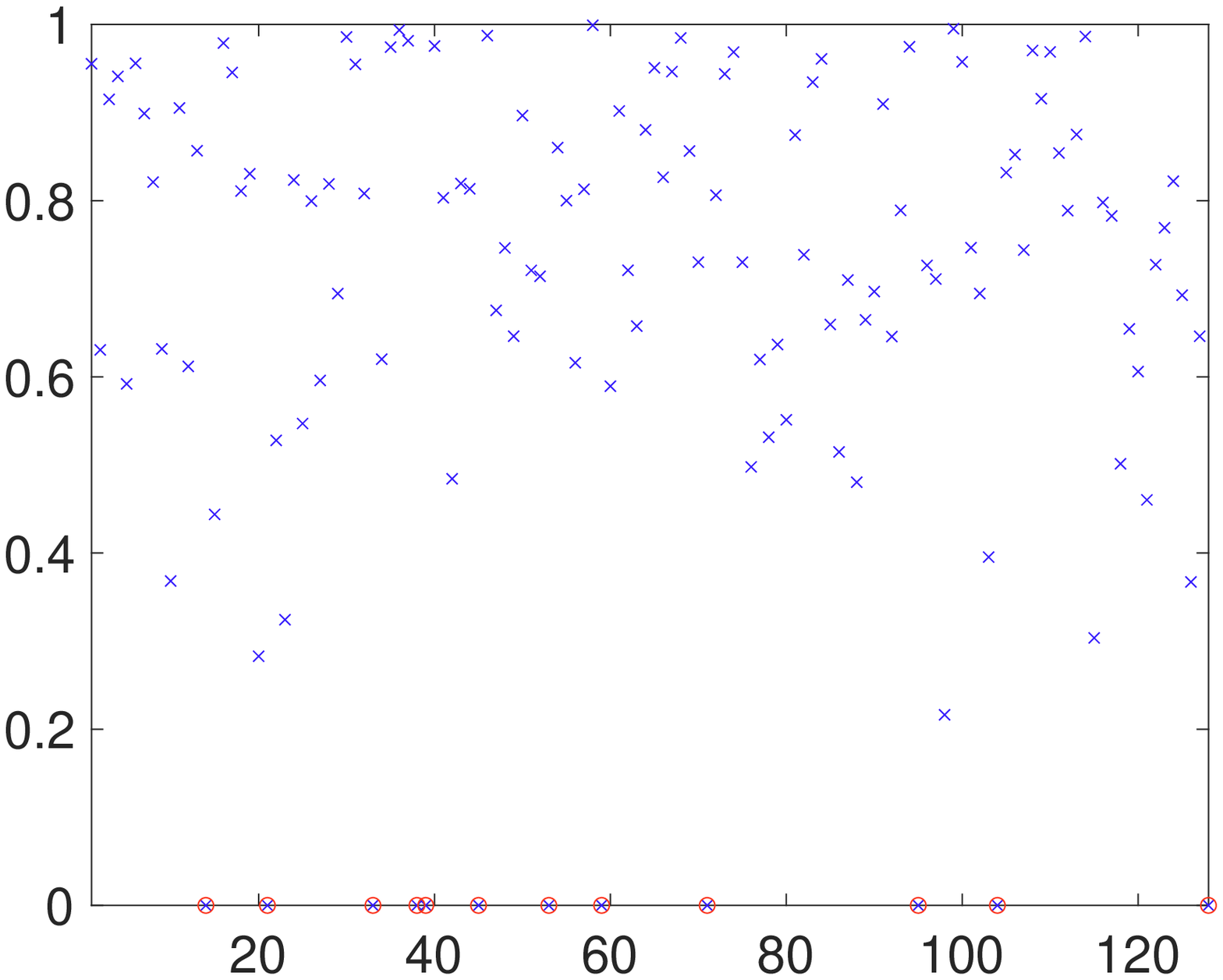}}
    \end{center}
    \caption{DRS residual based SSN method for solving basis pursuit problem. For $x^* = \prox_{th}(z^*)$ with $z^*$ being the last iteration point, the SC holds, while the BD-regularity condition does not.  (a). Locally superlinear convergence rate is observed. (b). Both $x_{i_1}^*$ and $x_{i_2}^*$ in $x^*$ are non-zero. It implies that $x^*$ is not an isolated solution and the BD-regularity condition fails. (c) The red ``o'' indicates the indices of the non-zero elements in $x^*$ and the blue ``x'' represents the values of $1-|A^\top y^*|$, where  $y^* = A(x^* -z^*)/t$ is the solution of the dual problem. We see that the SC holds at the solution pair $(x^*, y^*)$ according to Example \ref{eq:bp-sc}.}
    \label{fig:ssn-drs-BP}
\end{figure}

\section{Conclusion} 

We study the convergence of the SSN methods for a large class of nonlinear equations derived from first-order type methods for solving
composite optimization problems. Existing literature on the superlinear convergence analysis of SSN methods predominantly hinges on either BD-regularity or local smoothness.
For BD-regularity, we introduce equivalent characterizations to facilitate easier verification. In the case of local smoothness, we demonstrate that it can be inferred from two types of conditions. The first condition combines partial smoothness with  SC, while the second condition requires the second subderivative to be generalized quadratic. We further illustrate that numerous nonconvex and nonsmooth functions, commonly encountered in practical applications, meet these stipulated conditions. Moreover, we establish the superlinear convergence of a projected SSN method, predicated on local smoothness on a submanifold and the local error bound condition. Figure \ref{fig:compare-BD} shows the relationships among BD-regularity, the EB condition, the isolatedness of the stationary point, and the second-order sufficient condition.

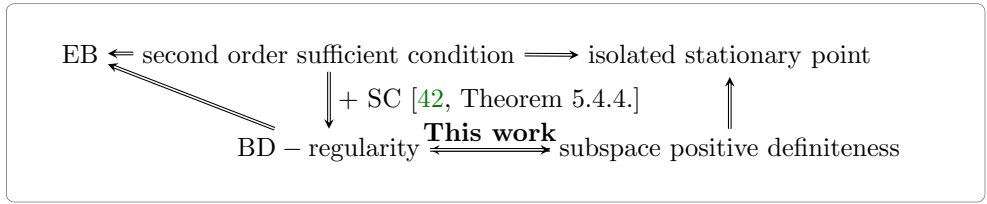
\begin{figure} 
\begin{center}
	\begin{tcolorbox}[colback = white, boxrule = {0.2pt}]
		\begin{tikzpicture}
			\matrix (m) [matrix of math nodes,row sep=2em,column sep=1em,minimum width=1em]
			{
			{\rm EB} & {\rm second~order~sufficient~condition} & {\rm isolated~stationary~point} \\
			\;	& {\rm BD}-{\rm regularity} & {\rm subspace~positive~definiteness}\\};
			\path[-stealth]
			(m-1-2) edge [double] node [right] {} (m-1-1)
			(m-2-2) edge [double] node [right] {} (m-1-1)
			(m-1-2)	edge [double] node [right] {+ SC \cite[Theorem 5.4.4.]{milzarek2016numerical}} (m-2-2)
			(m-1-2) edge [double] node [left] {} (m-1-3)
			(m-2-3) edge [double] node [left] {} (m-2-2) 
			(m-2-2) edge [double] node [above] {{\bf This work}} (m-2-3) 
			(m-2-3) edge [double] node [left] {} (m-1-3);
		\end{tikzpicture}
	\end{tcolorbox}
\end{center} \caption{Implications of different concepts of optimality conditions.}
\label{fig:compare-BD}
\end{figure}

\typeout{}
\bibliographystyle{siamplain}
\bibliography{references}

\begin{thebibliography}{10}

\bibitem{absil2012projection}
{\sc P.-A. Absil and J.~Malick}, {\em Projection-like retractions on matrix
  manifolds}, SIAM Journal on Optimization, 22 (2012), pp.~135--158.

\bibitem{attouch2010proximal}
{\sc H.~Attouch, J.~Bolte, P.~Redont, and A.~Soubeyran}, {\em Proximal
  alternating minimization and projection methods for nonconvex problems: An
  approach based on the {{Kurdyka}-{\L}}ojasiewicz inequality}, Mathematics of
  operations research, 35 (2010), pp.~438--457.

\bibitem{bareilles2022newton}
{\sc G.~Bareilles, F.~Iutzeler, and J.~Malick}, {\em Newton acceleration on
  manifolds identified by proximal gradient methods}, Mathematical Programming,
   (2022), pp.~1--34.

\bibitem{bauschke2011convex}
{\sc H.~H. Bauschke and P.~L. Combettes}, {\em Convex analysis and monotone
  operator theory in {H}ilbert spaces}, CMS Books in Mathematics/Ouvrages de
  Math\'{e}matiques de la SMC, Springer, New York, 2011.

\bibitem{beck2009fast}
{\sc A.~Beck and M.~Teboulle}, {\em A fast iterative shrinkage-thresholding
  algorithm for linear inverse problems}, SIAM journal on imaging sciences, 2
  (2009), pp.~183--202.

\bibitem{bolte07clarke}
{\sc J.~Bolte, A.~Daniilidis, A.~Lewis, and M.~Shiota}, {\em Clarke
  subgradients of stratifiable functions}, SIAM Journal on Optimization, 18
  (2007), pp.~556--572.

\bibitem{chan2008constraint}
{\sc Z.~X. Chan and D.~Sun}, {\em Constraint nondegeneracy, strong regularity,
  and nonsingularity in semidefinite programming}, SIAM Journal on
  optimization, 19 (2008), pp.~370--396.

\bibitem{clarke1983nonsmooth}
{\sc F.~H. Clarke}, {\em Nonsmooth analysis and optimization}, in Proceedings
  of the international congress of mathematicians, vol.~5, Citeseer, 1983,
  pp.~847--853.

\bibitem{clarke1995proximal}
{\sc F.~H. Clarke, R.~J. Stern, and P.~R. Wolenski}, {\em Proximal smoothness
  and the lower-c2 property}, J. Convex Anal, 2 (1995), pp.~117--144.

\bibitem{combettes2011proximal}
{\sc P.~L. Combettes and J.-C. Pesquet}, {\em Proximal splitting methods in
  signal processing}, in Fixed-point algorithms for inverse problems in science
  and engineering, Springer, 2011, pp.~185--212.

\bibitem{daniilidis06geometric}
{\sc A.~Daniilidis, W.~Hare, and J.~Malick}, {\em Geometrical interpretation of
  the predictor-corrector type algorithms in structured optimization problems},
  Optimization, 55 (2006), pp.~481--503.

\bibitem{davis2020stochastic}
{\sc D.~Davis, D.~Drusvyatskiy, and Z.~Shi}, {\em Stochastic optimization over
  proximally smooth sets}, arXiv preprint arXiv:2002.06309,  (2020).

\bibitem{davis2016convergence}
{\sc D.~Davis and W.~Yin}, {\em Convergence rate analysis of several splitting
  schemes}, in Splitting methods in communication, imaging, science, and
  engineering, Springer, 2016, pp.~115--163.

\bibitem{lewis16generic}
{\sc D.~Drusvyatskiy, A.~D. Ioffe, and A.~S. Lewis}, {\em Generic minimizing
  behavior in semialgebraic optimization}, SIAM Journal on Optimization, 26
  (2016), pp.~513--534.

\bibitem{lewis14optimality}
{\sc D.~Drusvyatskiy and A.~S. Lewis}, {\em Optimality, identifiability, and
  sensitivity}, arXiv preprint arXiv:1207.6628,  (2012).

\bibitem{eckstein1992douglas}
{\sc J.~Eckstein and D.~P. Bertsekas}, {\em On the {Douglas—Rachford}
  splitting method and the proximal point algorithm for maximal monotone
  operators}, Mathematical Programming, 55 (1992), pp.~293--318.

\bibitem{facchinei2014lp}
{\sc F.~Facchinei, A.~Fischer, and M.~Herrich}, {\em An {LP}-{N}ewton method:
  nonsmooth equations, {KKT} systems, and nonisolated solutions}, Mathematical
  Programming, 146 (2014), pp.~1--36.

\bibitem{fan2004inexact}
{\sc J.~Fan and J.~Pan}, {\em Inexact {Levenberg-Marquardt} method for
  nonlinear equations}, Discrete \& Continuous Dynamical Systems-B, 4 (2004),
  p.~1223.

\bibitem{fischer2002local}
{\sc A.~Fischer}, {\em Local behavior of an iterative framework for generalized
  equations with nonisolated solutions}, Mathematical Programming, 94 (2002),
  pp.~91--124.

\bibitem{fischer2016convergence}
{\sc A.~Fischer, M.~Herrich, A.~F. Izmailov, and M.~V. Solodov}, {\em
  Convergence conditions for {N}ewton-type methods applied to complementarity
  systems with nonisolated solutions}, Computational Optimization and
  Applications, 63 (2016), pp.~425--459.

\bibitem{fischer2010inexactness}
{\sc A.~Fischer, P.~Shukla, and M.~Wang}, {\em On the inexactness level of
  robust {L}evenberg--{M}arquardt methods}, Optimization, 59 (2010),
  pp.~273--287.

\bibitem{fukushima1981generalized}
{\sc M.~Fukushima and H.~Mine}, {\em A generalized proximal point algorithm for
  certain non-convex minimization problems}, International Journal of Systems
  Science, 12 (1981), pp.~989--1000.

\bibitem{griesse2008semismooth}
{\sc R.~Griesse and D.~A. Lorenz}, {\em A semismooth {N}ewton method for
  {T}ikhonov functionals with sparsity constraints}, Inverse Problems, 24
  (2008), p.~035007.

\bibitem{hale2008fixed}
{\sc E.~T. Hale, W.~Yin, and Y.~Zhang}, {\em Fixed-point continuation for
  $\ell_1$-minimization: Methodology and convergence}, SIAM Journal on
  Optimization, 19 (2008), pp.~1107--1130.

\bibitem{karimi2016linear}
{\sc H.~Karimi, J.~Nutini, and M.~Schmidt}, {\em Linear convergence of gradient
  and proximal-gradient methods under the {Polyak-{\L}}ojasiewicz condition},
  in Joint European Conference on Machine Learning and Knowledge Discovery in
  Databases, Springer, 2016, pp.~795--811.

\bibitem{kummer1988newton}
{\sc B.~Kummer et~al.}, {\em Newton’s method for non-differentiable
  functions}, Advances in mathematical optimization, 45 (1988), pp.~114--125.

\bibitem{lewis2002active}
{\sc A.~S. Lewis}, {\em Active sets, nonsmoothness, and sensitivity}, SIAM
  Journal on Optimization, 13 (2002), pp.~702--725.

\bibitem{lewis08projection}
{\sc A.~S. Lewis and J.~Malick}, {\em Alternating projections on manifolds},
  Mathematics of Operations Research, 33 (2008), pp.~216--234.

\bibitem{li2004regularized}
{\sc D.-H. Li, M.~Fukushima, L.~Qi, and N.~Yamashita}, {\em Regularized
  {N}ewton methods for convex minimization problems with singular solutions},
  Computational optimization and applications, 28 (2004), pp.~131--147.

\bibitem{li2016douglas}
{\sc G.~Li and T.~K. Pong}, {\em {Douglas--R}achford splitting for nonconvex
  optimization with application to nonconvex feasibility problems},
  Mathematical programming, 159 (2016), pp.~371--401.

\bibitem{li2018highly}
{\sc X.~Li, D.~Sun, and K.-C. Toh}, {\em A highly efficient semismooth {Newton}
  augmented {Lagrangian} method for solving {Lasso} problems}, SIAM Journal on
  Optimization, 28 (2018), pp.~433--458.

\bibitem{li2020asymptotically}
{\sc X.~Li, D.~Sun, and K.-C. Toh}, {\em An asymptotically superlinearly
  convergent semismooth newton augmented lagrangian method for linear
  programming}, SIAM Journal on Optimization, 30 (2020), pp.~2410--2440.

\bibitem{li2018semismooth}
{\sc Y.~Li, Z.~Wen, C.~Yang, and Y.-x. Yuan}, {\em A semismooth {Newton} method
  for semidefinite programs and its applications in electronic structure
  calculations}, SIAM Journal on Scientific Computing, 40 (2018),
  pp.~A4131--A4157.

\bibitem{liang2014local}
{\sc J.~Liang, J.~Fadili, and G.~Peyr{\'e}}, {\em Local linear convergence of
  forward--backward under partial smoothness}, Advances in neural information
  processing systems, 27 (2014).

\bibitem{liang2017activity}
{\sc J.~Liang, J.~Fadili, and G.~Peyr{\'e}}, {\em Activity identification and
  local linear convergence of forward--backward-type methods}, SIAM Journal on
  Optimization, 27 (2017), pp.~408--437.

\bibitem{lin2019efficient}
{\sc M.~Lin, Y.-J. Liu, D.~Sun, and K.-C. Toh}, {\em Efficient sparse
  semismooth newton methods for the clustered lasso problem}, SIAM Journal on
  Optimization, 29 (2019), pp.~2026--2052.

\bibitem{lions1979splitting}
{\sc P.-L. Lions and B.~Mercier}, {\em Splitting algorithms for the sum of two
  nonlinear operators}, SIAM Journal on Numerical Analysis, 16 (1979),
  pp.~964--979.

\bibitem{liu2022multiscale}
{\sc Y.~Liu, Z.~Wen, and W.~Yin}, {\em A multiscale semi-smooth {Newton} method
  for optimal transport}, Journal of Scientific Computing, 91 (2022),
  pp.~1--29.

\bibitem{luo1992linear}
{\sc Z.-Q. Luo and P.~Tseng}, {\em On the linear convergence of descent methods
  for convex essentially smooth minimization}, SIAM Journal on Control and
  Optimization, 30 (1992), pp.~408--425.

\bibitem{luo1993error}
{\sc Z.-Q. Luo and P.~Tseng}, {\em Error bounds and convergence analysis of
  feasible descent methods: a general approach}, Annals of Operations Research,
  46 (1993), pp.~157--178.

\bibitem{mifflin1977semismooth}
{\sc R.~Mifflin}, {\em Semismooth and semiconvex functions in constrained
  optimization}, SIAM Journal on Control and Optimization, 15 (1977),
  pp.~959--972.

\bibitem{milzarek2016numerical}
{\sc A.~Milzarek}, {\em Numerical methods and second order theory for nonsmooth
  problems}, PhD thesis, Technische Universit{\"a}t M{\"u}nchen, 2016.

\bibitem{milzarek2014semismooth}
{\sc A.~Milzarek and M.~Ulbrich}, {\em A semismooth {Newton} method with
  multidimensional filter globalization for $\ell_1$-optimization}, SIAM
  Journal on Optimization, 24 (2014), pp.~298--333.

\bibitem{milzarek2019stochastic}
{\sc A.~Milzarek, X.~Xiao, S.~Cen, Z.~Wen, and M.~Ulbrich}, {\em A stochastic
  semismooth {Newton} method for nonsmooth nonconvex optimization}, SIAM
  Journal on Optimization, 29 (2019), pp.~2916--2948.

\bibitem{wright1999numerical}
{\sc J.~Nocedal and S.~Wright}, {\em Numerical optimization}, Springer Science,
  35 (1999), p.~7.

\bibitem{pang1987posteriori}
{\sc J.-S. Pang}, {\em A posteriori error bounds for the linearly-constrained
  variational inequality problem}, Mathematics of Operations Research, 12
  (1987), pp.~474--484.

\bibitem{pang1993nonsmooth}
{\sc J.-S. Pang and L.~Qi}, {\em Nonsmooth equations: motivation and
  algorithms}, SIAM Journal on optimization, 3 (1993), pp.~443--465.

\bibitem{poliquin1996generalized}
{\sc R.~A. Poliquin and R.~T. Rockafellar}, {\em Generalized {Hessian}
  properties of regularized nonsmooth functions}, SIAM Journal on Optimization,
  6 (1996), pp.~1121--1137.

\bibitem{qi1993convergence}
{\sc L.~Qi}, {\em Convergence analysis of some algorithms for solving nonsmooth
  equations}, Mathematics of operations research, 18 (1993), pp.~227--244.

\bibitem{qi1993nonsmooth}
{\sc L.~Qi and J.~Sun}, {\em A nonsmooth version of {Newton's} method},
  Mathematical programming, 58 (1993), pp.~353--367.

\bibitem{rockafellar1988first}
{\sc R.~T. Rockafellar}, {\em First-and second-order epi-differentiability in
  nonlinear programming}, Transactions of the American Mathematical Society,
  307 (1988), pp.~75--108.

\bibitem{rockafellar2009variational}
{\sc R.~T. Rockafellar and R.~J.-B. Wets}, {\em Variational analysis},
  vol.~317, Springer Science \& Business Media, 2009.

\bibitem{schaible1996generalized}
{\sc S.~Schaible et~al.}, {\em Generalized monotone nonsmooth maps}, Journal of
  Convex Analysis, 3 (1996), pp.~195--206.

\bibitem{shapiro2003class}
{\sc A.~Shapiro}, {\em On a class of nonsmooth composite functions},
  Mathematics of Operations Research, 28 (2003), pp.~677--692.

\bibitem{stella2017forward}
{\sc L.~Stella, A.~Themelis, and P.~Patrinos}, {\em Forward--backward
  {quasi-Newton} methods for nonsmooth optimization problems}, Computational
  Optimization and Applications, 67 (2017), pp.~443--487.

\bibitem{patrinos2014forward}
{\sc A.~Themelis, M.~Ahookhosh, and P.~Patrinos}, {\em On the acceleration of
  forward-backward splitting via an inexact {N}ewton method}, Splitting
  Algorithms, Modern Operator Theory, and Applications,  (2019), pp.~363--412.

\bibitem{themelis2018forward}
{\sc A.~Themelis, L.~Stella, and P.~Patrinos}, {\em Forward-backward envelope
  for the sum of two nonconvex functions: Further properties and nonmonotone
  linesearch algorithms}, SIAM Journal on Optimization, 28 (2018),
  pp.~2274--2303.

\bibitem{tseng2010approximation}
{\sc P.~Tseng}, {\em Approximation accuracy, gradient methods, and error bound
  for structured convex optimization}, Mathematical Programming, 125 (2010),
  pp.~263--295.

\bibitem{xiao2018regularized}
{\sc X.~Xiao, Y.~Li, Z.~Wen, and L.~Zhang}, {\em A regularized semi-smooth
  {Newton} method with projection steps for composite convex programs}, Journal
  of Scientific Computing, 76 (2018), pp.~364--389.

\bibitem{yamashita2001rate}
{\sc N.~Yamashita and M.~Fukushima}, {\em On the rate of convergence of the
  {L}evenberg--{M}arquardt method}, in Topics in Numerical Analysis: With
  Special Emphasis on Nonlinear Problems, Springer, 2001, pp.~239--249.

\bibitem{yan2016self}
{\sc M.~Yan and W.~Yin}, {\em Self equivalence of the alternating direction
  method of multipliers}, Splitting Methods in Communication, Imaging, Science,
  and Engineering,  (2016), pp.~165--194.

\bibitem{yang2015sdpnal+}
{\sc L.~Yang, D.~Sun, and K.-C. Toh}, {\em {SDPNAL+}: a majorized semismooth
  {Newton-CG} augmented {Lagrangian} method for semidefinite programming with
  nonnegative constraints}, Mathematical Programming Computation, 7 (2015),
  pp.~331--366.

\bibitem{yang2021stochastic}
{\sc M.~Yang, A.~Milzarek, Z.~Wen, and T.~Zhang}, {\em A stochastic extra-step
  {quasi-Newton} method for nonsmooth nonconvex optimization}, Mathematical
  Programming,  (2021), pp.~1--47.

\bibitem{yue2019family}
{\sc M.-C. Yue, Z.~Zhou, and A.~M.-C. So}, {\em A family of inexact {SQA}
  methods for non-smooth convex minimization with provable convergence
  guarantees based on the {Luo--Tseng} error bound property}, Mathematical
  Programming, 174 (2019), pp.~327--358.

\bibitem{zhao2010newton}
{\sc X.-Y. Zhao, D.~Sun, and K.-C. Toh}, {\em A {Newton-CG} augmented
  {Lagrangian} method for semidefinite programming}, SIAM Journal on
  Optimization, 20 (2010), pp.~1737--1765.

\bibitem{zhou2006convergence}
{\sc G.~Zhou and L.~Qi}, {\em On the convergence of an inexact {N}ewton-type
  method}, Operations research letters, 34 (2006), pp.~647--652.

\bibitem{zhou2005superlinear}
{\sc G.~Zhou and K.-C. Toh}, {\em Superlinear convergence of a {N}ewton-type
  algorithm for monotone equations}, Journal of optimization theory and
  applications, 125 (2005), pp.~205--221.

\bibitem{zhou2017unified}
{\sc Z.~Zhou and A.~M.-C. So}, {\em A unified approach to error bounds for
  structured convex optimization problems}, Mathematical Programming, 165
  (2017), pp.~689--728.

\end{thebibliography}

\end{document}